\tikzset{>=latex}
\newcommand{\cate}[1]{\textcolor{purple}{#1}}
\newcommand{\disp}{\displaystyle}     %memorizza displaystyle
\newcommand{\de}{\partial}        
\newcommand{\norm}[1]{\left\lVert#1\right\rVert}
\def\rhoc{\rho_{c}}
\def\rhof{\rho_{f}}
\def\rhom{\rho^{\mathrm{max}}}
\def\vmax{V^{\mathrm{max}}}
\def\dx{\Delta x}
\def\dy{\Delta y}
\def\dz{\Delta z}
\def\dt{\Delta t}
\def\wl{w_{L}}
\def\wr{w_{R}}
\def\no{\mathrm{NO}}
\def\nodue{\mathrm{NO_{2}}}
\def\nox{\mathrm{NO_{x}}}
\def\ouno{\mathrm{O}}
\def\odue{\mathrm{O_{2}}}
\def\otre{\mathrm{O_{3}}}
\def\cc{\psi}
\def\CC{\Psi}
\newcommand{\myunit}[1]{#1\,}
\def\mysecond{\mathrm{s}}
\def\mymeter{\mathrm{m}}
\def\mykilo{\mathrm{k}}
\def\mycenti{\mathrm{c}}
\def\mymin{\mathrm{min}}
\def\per{/}
\def\km{\mathrm{km}}
\def\myhour{\mathrm{h}}
\def\mygram{\mathrm{g}}
\def\veh{\mathrm{veh}}
\def\vento{\vec{C}}
\def\ventox{c_{x}}
\def\ventoy{c_{y}}
\def\lx{L_{x}}
\def\ly{L_{y}}
\def\vol{\mathcal{V}}
\def\cuno{C^{1}}
\let\oldparagraph=\paragraph
\renewcommand\paragraph[1]{\oldparagraph{#1.}}
\numberwithin{equation}{section}
\newtheorem{remark}{Remark}
\newtheorem{proposition}{Proposition}
\numberwithin{remark}{section}
\newcommand{\labeltext}[3][]{%
    \@bsphack%
    \csname phantomsection\endcsname% in case hyperref is used
    \def\tst{#1}%
    \def\labelmarkup{}% How to markup the label itself
    \def\refmarkup{}%
    %\ifx\tst\empty\def\@currentlabel{\refmarkup{\textcolor{black}{#2}}}{\label{#3}}%
    \ifx\tst\empty\def\@currentlabel{\refmarkup{#2}}{\label{#3}}%
    \else\def\@currentlabel{\refmarkup{#1}}{\label{#3}}\fi%
    \@esphack%
    \labelmarkup{#2}% visible printed text.
}
\title{\Large\textbf{Evaluation of $\nox$ emissions and ozone production due 
to vehicular traffic via second-order models}}
\author{\normalsize{Caterina Balzotti}\thanks{Dipartimento di Scienze di Base e Applicate per l'Ingegneria, Sapienza Universit\`a di Roma, Rome, Italy (\href{mailto:caterina.balzotti@sbai.uniroma1.it}{caterina.balzotti@sbai.uniroma1.it})
}
\and {\normalsize{Maya Briani}\thanks{Istituto per le Applicazioni del Calcolo ``M.\ Picone'', 
Consiglio Nazionale delle Ricerche, Rome, Italy (\href{mailto:m.briani@iac.cnr.it}{m.briani@iac.cnr.it}, \href{mailto:b.defilippo@iac.cnr.it}{b.defilippo@iac.cnr.it})}
}
\and {\normalsize{Barbara De Filippo}\footnotemark[2]}
\and {\normalsize{Benedetto Piccoli}\thanks{Department of Mathematical Sciences, Rutgers University, Camden, USA (\href{mailto:piccoli@camden.rutgers.edu}{piccoli@camden.rutgers.edu})
}}
}
\date{\vspace{-0.5cm}}
\begin{document}

\maketitle

% REQUIRED
\begin{abstract}
The societal impact of traffic is a long-standing and complex problem. We focus on the estimation of ozone production due to vehicular traffic. For this, we couple a system of conservation laws for vehicular traffic,
an emission model, and a system of partial differential equations 
for the main reactions leading to ozone production and diffusion. 
The second-order model for traffic is obtained by choosing a special velocity function for a Collapsed Generalized Aw-Rascle-Zhang model and is tuned on NGSIM data. On the other side, the system of partial differential equations describes the main chemical reactions of $\nox$ gases with a source term provided by a general emission model applied to the output of the traffic model.  We analyze the ozone impact of various traffic scenarios and describe the effect of traffic light timing. The numerical tests show the negative effect of vehicles restarts on $\nox$ emissions, suggesting to increase the length of the green phase of traffic lights to reduce them. 
\end{abstract}

\begin{description}
\item[\textbf{Keywords.}] Road traffic modeling; second-order traffic models; emissions; ozone production. 
\item[\textbf{Mathematics Subject Classification.}]  35L65, 90B20, 62P12.
\end{description}

\section{Introduction}\label{sec:introduction}
The impact of road traffic and its inefficiencies on society is well known and was documented with quantitative estimates for more than a decade \cite{trb2019}. 
Moreover, the societal impact is high also in terms of pollution and environmental effects,  with road traffic accounting for nearly one third of carbon dioxide (CO$_2$) emissions \cite{TRBGreenhouse2011}. 
In general, the impact of air quality on public's health is one of the world's worst toxic pollution problems in this century, the current levels of air pollutants in urban areas are associated with large number of health conditions, including respiratory infections, heart disease \cite{air2019} and cancer.
Air pollutants also contribute to the phenomena of greenhouse effect, ozone depletion, deforestation and the acidification of water and soils \cite{ramanathan2009AE} and they can induce certain diseases as well as damages on materials (plastic, metals, stones), including Cultural Heritage's ones \cite{tidblad2017M,fuente2013CH}.
While CO$_2$ is probably one of the most studied molecules,
the effect on health is also related to other pollutants, such as 
particulate matters and nitrogen dioxide (NO$_2$), see \cite{Zhang2013}.
Here we focus on the production of ozone which stems out of chemical
reactions in the atmosphere of the $\nox$ gases \cite{atkinson2000AE,wang2017STE}.

Much attention has been devoted in traffic literature to
quantities such as flow, capacity and travel time. However, advanced modeling of fuel consumption and emission still faces limitations, especially for tools which can be integrated with the increasing flow of data from probe sensors.
One of the main reasons is the high variability of fuel consumption and emissions, which are influenced by many factors as the vehicle type, make, model, year and others.
Even if the estimation of fuel consumption and emission at the level of single vehicle presents such drawbacks, as shown in \cite{PiccoliKeFrieszYao2012}, it is possible to achieve reliable estimates using second-order macroscopic models paired with probe sensor data. 
Despite the modeling difficulties cited above, there is an interesting line of research carried out, for example, in the works \cite{alvarez2017JCAM,alvarez2018MCRF,bayen2014}. In these three papers the traffic modeling relies on the first order Lighthill-Whitham-Richards model \cite{LighthillWhitham1955, Richards1956}. In \cite{alvarez2017JCAM} a reaction-diffusion model describes the spread of carbon monoxide in the air with a source term associated to traffic dynamics. This analysis has been reformulated in \cite{alvarez2018MCRF} to set up an optimization problem aimed at finding the optimal model parameters to reduce pollution. In \cite{bayen2014} the authors propose a new methodology to estimate in real-time the emission rates of pollutants and describe their diffusion in air.
The approach we propose in this work fits in this line of research, but is based on second order traffic models, that is models based on two equations
instead of one. Our choice is motivated by the fact that most models
for emissions use the car acceleration, and the latter are better approximated
by using second order models. The pollutants analysis refers to $\nox$ gases taking into account the chemical reactions that lead to ozone production, and finally considering the spread of ozone in the atmosphere. 
%We thus build-up on this idea by coupling a second-order macroscopic model with a system of ordinary differential equations (briefly ODEs) representing the complex chemical reactions of $\nox$ gases, still subject to intensive research, which lead to ozone production. 
Hence, our approach consists of four components is as follows:
1) Traffic dynamics is given by a second order fluid-dynamic model;
2) Pollutants' production are estimated using well-established emissions models based on the traffic quantities given by 1);
3) Then 2) is used to define a source term in  a system of partial differential equations (briefly PDEs) of reaction type,
representing the complex chemical reactions of $\nox$ gases to produce ozone;
4) Finally diffusion in the atmosphere is obtained coupling
the reaction PDEs of 3) with diffusion ones.
We point out that the chemical reactions from $\nox$ gases to ozone
in the atmosphere are still subject to intensive research. However,
our approach is general and can be used for different chemical reaction models
and other pollutants.
The models 1) and 2) are tuned using the NGSIM data set
\cite{TrafficNGSIM}, while for 3) and 4) we use parameters
from literature.

%Specifically, we propose a tool capable of analyzing the environmental impact of vehicular traffic through four consecutive steps, each powered by the previous one: 1) Estimate of traffic quantities, i.e.\ density, speed and acceleration of vehicles; 2) Estimate of the emission rates of specific pollutants exploiting the traffic variables; 3) Study of the chemical reactions associated to the pollutant under analysis; 4) Analysis of the diffusion of pollutants in air.

Let us start by discussing the first step related to the evaluation of traffic quantities.
First notice that most emission models use both the speed $v$ and acceleration $a$ of vehicles \cite{Barth2000}.
Thus a macroscopic model to be paired with an emission estimator must be of second-order, i.e.\ consists of an equation for conservation of mass and one for balance of momentum. In particular, the density-flow relation, also known as fundamental diagram, is typically multi-valued and allows a better fit of traffic data.
General approaches have been proposed for second-order models \cite{GaravelloHanPiccoli2016,LebacqueMammarHajSalem2007}, extending the well-know Aw-Rascle-Zhang model \cite{AwRascle2000,Zhang2002}. % and the paper \cite{PiccoliKeFrieszYao2012} used a phase transition one \cite{BlandinWorkGoatinPiccoliBayen2011,Colombo2003}. 
The recent paper \cite{FanSunPiccoliSeiboldWork2017} proposed to use a generalized second-order model with collapsed fundamental diagram in the free phase, thus allowing phase transitions with a simpler description and fitting well with probe and fixed sensor data. This modeling framework is called Collapsed Generalized Aw-Rascle-Zhang Models (briefly CGARZ) and we specify a model in this class by interpolating the Newell-Daganzo or triangular fundamental diagram with the Greenshield quadratic one.

The second step relies on emission models. Among the different models available in literature we have chosen to use the one in \cite{panis2006elsevier} based on a combination of velocity, acceleration and their powers, with parameters specifically tuned for $\nox$ emissions of a petrol car.
Then we pass to the third step which consists in modeling the chemical reactions at the base of ozone production in the atmosphere caused from $\nox$ emissions due to vehicular traffic. Traffic is estimated to cause around one half of nitrogen oxide production, which in turn is one of the main precursor of ozone. The photodissociation
of NO$_2$ is then responsible for the production of the highly reactive $\ouno$ atom and, finally, of ozone. The model capturing these reactions is comprised of a system of five differential equations. The production of NO$_2$ is tuned to 15\% of the overall $\nox$ production as suggested by the recent work \cite{carslaw2011AE}.

To first analyze the emissions and main reactions at street level, we pair the CGARZ model with a system of ODEs distributed along a one-dimensional parametrization of a road. The CGARZ system is responsible for the source term of the ODEs, representing $\nox$ emissions. The coupled system is then simulated using a Godunov-type scheme \cite{FanSunPiccoliSeiboldWork2017} for the CGARZ paired with an ODE-solver for stiff problems for the reactions differential system. To complete the analysis we also consider the diffusion in air of pollutants. Some example of reaction-diffusion models have been proposed in \cite{alvarez2017JCAM,alvarez2018MCRF,bayen2014,stockie2011SIAM}. To analyze the spread of pollutants in the air, we propose two different approaches to integrate the traffic contribution in the PDEs model: in the first one the pollutant emissions are integrated as boundary condition at the bottom of a two dimensional domain where the diffusion evolves vertically; in the second one, the diffusion evolves horizontally around the road and the emissions of traffic are given as a source term in the equations.

The last but not least part of the paper is devoted to the application of the proposed procedure to various traffic scenarios.
The first numerical test is used to validate the emission model. Indeed, first the second-order traffic model is tuned and tested on NGSIM data \cite{TrafficNGSIM}. 
%\maya{First, the NGSIM data are used to generate fundamental diagram data points to fix the CGARZ model parameters.}
Then, as in \cite{PiccoliKeFrieszYao2012}, we compare emission predictions using the CGARZ model and a macroscopic emission formula
with \emph{ground-truth} emissions using the whole NGSIM dataset
and a microscopic emission formula. 
The resulting predictions need a correction factor, which is determined alternating the NGSIM data blocks (each of 15 minutes)
as training and verification data. The overall relative error ranges between 5\%
and 23\% with an average value of 14\%.
Notice that the relative error would be on the high end if the ultimate goal of the investigation would be the exact estimates of the emissions. 
Finally we analyze the procedure which leads to the production of ozone.
We first run a simple test: the simulation of an interaction between a shock wave with a rarefaction. The shock represent a backward moving queue while the rarefaction an acceleration wave. The shock has minimal effect on the $\nox$ emissions while
the acceleration wave is the most responsible for the highest values. We then consider a road with a traffic light and green-red cycles. The emissions are compared for different length of the cycle and different proportions of the red-green times. The length of the cycle strongly affects $\nox$ emissions: moving from 2.5 minutes to 7.5 minutes produces an increase of around 10\% of emissions, see Figure \ref{fig:semafori_rfisso}. On the other side, the variation of the red time versus green one does not affect significantly $\nox$ emissions, except for an initial ramp up phase when starting from empty road, see Figure \ref{fig:tc_fisso}. These findings are in line with what observed in the first test, but quite different from the common intuition.
We then focus specifically on ozone production. We use ODEs to simulate the ozone concentration at street level and PDEs for its diffusion in the air. Coherently with the test on $\nox$, the level of ozone concentration are highly influenced by the presence of traffic light.

One of the main conclusions of this work is that the duration of traffic cycles affects $\nox$ emissions and ozone production more than the ratio between green and red phase. Therefore, in order to reduce traffic emissions, a possible solution that emerges from this study is the reduction of vehicle restarts by increasing the green phase duration of traffic lights. 
Furthermore, the ozone production is highly influenced by traffic lights. Indeed, the vertical diffusion in presence of traffic lights during 4 hours of simulation shows a 18\% increase of ozone at 1 meter from the ground compared to the case of no traffic light. Analogously, the horizontal diffusion, which considers also the wind, shows a 11\% increase of ozone at 50 meters from the road in presence of traffic lights during 30 minutes of simulation.

\smallskip
The paper is organized as follows. In Section \ref{sec:CGARZ} we describe the CGARZ model and in Section \ref{sec:Emissions} the emission model. In Section \ref{Air} we introduce a simplified set of chemical reactions which lead to ozone production and in Section \ref{sec:diff} we deal with the diffusion of the chemical species in air. In Section \ref{sec:sistemone} we merge the traffic model with the system of ODEs associated to the chemical reactions and of PDEs for pollutants diffusion. Finally, from Section \ref{sec:test} the proposed procedure is used in several numerical tests to estimate the production of ozone associated to different traffic scenarios.

%%% PARTE NUOVA
\section{Traffic model}
\label{sec:CGARZ}
This section is devoted to the first step of our tool: the traffic model. Vehicles dynamics are described by means of a macroscopic second order traffic model, providing the quantities we are interested in, i.e.\ density, speed and acceleration of vehicles.
Specifically,  we introduce the \textit{Collapsed Generalized Aw-Rascle-Zhang} (hereafter CGARZ) model \cite{FanSunPiccoliSeiboldWork2017}, to describe the evolution of traffic flow, proposing new flux and velocity functions.

%%%%%%%%%%%%%%
\subsection{CGARZ Model}
\label{sec:cgarz}
The CGARZ model is one of the \emph{Generic Second Order Models} (GSOM) \cite{LebacqueMammarHajSalem2007}, a family of macroscopic models which satisfy 
\begin{align}
\begin{split}
&\begin{cases}
\rho_t+(\rho v)_x = 0\\
w_t+vw_x = 0\\
\end{cases}\\
&\disp \text{with } v=V(\rho,w),
\end{split}
\label{eq:CGARZ1}
\end{align}
for a specific velocity function $V$. The variables $\rho(x,t)$, $v(x,t)$ and $w(x,t)$ are respectively the traffic density, the velocity and a property of vehicles which is advected by traffic flow. 
The problem can be written in conservative form as:
\begin{align}
\begin{split}
&\begin{cases}
\rho_t+(\rho v)_x=0\\
y_t+(y v)_x=0\\
\end{cases}\\
&\disp \text{with } v = V(\rho,y/\rho),
\end{split}
\label{eq:CGARZ2}
\end{align}
where $y = \rho w$ is the conserved \emph{total property}. 
The variable $w$ correlates different behaviors of drivers to the flow-density curves. Thus, the GSOM posses a family of fundamental diagrams $Q(\rho,w)=\rho V(\rho,w)$, parametrized by $w$. The peculiarity of the CGARZ model is that $w$ does not influence the traffic behavior in the low density regime. This means that vehicles may have different properties, but the velocity and flow in free-flow is not affected by $w$. Thus, CGARZ possesses a single-valued fundamental diagram in free-flow, and a multi-valued function in congestion. The flux function has then the following form
\begin{equation}\label{eq:Qcgarz}
Q(\rho,w) = \begin{cases}
Q_f(\rho) &\quad\text{if $0\leq\rho\leq\rho_f$}\\
Q_c(\rho,w) &\quad\text{if $\rho_f\leq\rho\leq\rhom$},
\end{cases}
\end{equation}
where $\rho_f$ is the \textit{free-flow threshold density} independent on $w$, and $\rhom$ is the maximum density. 
Following \cite{FanSunPiccoliSeiboldWork2017}, the flux function \eqref{eq:Qcgarz} has to satisfy the following properties:
\begin{enumerate}[label=Q\arabic*.,ref=\textup{Q\arabic*}]
\item  \label{q1} $Q(\rho,w)\in \cuno$ for each $w$.%, where $\mathcal{C}^1$ is the set of continuously differentiable functions.
\item  \label{q2} Flux curves have a common $\rhom$ independent of $w$, $Q(\rhom,w) = 0,\,\forall\, w$.
\item  \label{q3} The flux is strictly concave with respect $\rho$, $\frac{\de^2 Q(\rho,w)}{\de\rho^2}<0$ for $\rho\in[0,\rhom)$.
\item  \label{q4} $\frac{\de Q(\rho,w)}{\de w}>0$ if $\rho_f<\rho<\rhom$.
\end{enumerate}
The flux function \eqref{eq:Qcgarz} defines a velocity function $V(\rho,w) = Q(\rho,w)/\rho$. Thus, as a consequence of the properties of $Q$, the velocity function $V$ is in $\cuno$ and is strictly decreasing with respect to $\rho$. Moreover, $V$ satisfies:
\begin{enumerate}[label=V\arabic*.]
\item Vehicles never go backwards, $V(\rho,w) \geq 0$.
\item $\rhom$ is the only density such that $V(\rhom,w)=0$.
\item In the free-flow regime, the traffic velocity is independent of $w$, $\frac{\de V(\rho,w)}{\de w} = 0$ if $0\leq\rho\leq\rhof$.
\item[V4.] In the congestion regime, the traffic velocity is increasing with respect to $w$, $\frac{\de V(\rho,w)}{\de w} > 0$ if $\rhof\leq\rho\leq\rhom$.
\end{enumerate}
In the next section we propose a new family of fundamental diagrams that satisfy the properties listed above.

%%%%%%%%%%%%%%%%%%%
\subsubsection{Flux and velocity functions}
Here we make a choice for the flux function of the CGARZ family,
thus determining a unique model to be used.
Differently from \cite{FanSunPiccoliSeiboldWork2017}, we choose
the flux function to be an interpolation between a triangular fundamental
diagram, also known as Newell-Daganzo, and a Greenshield fundamental diagram.
The reason for this choice is that those two diagrams are the most known and used in traffic modeling and they present two somehow opposite behavior, with the triangular one presenting a unique characteristic speed in congested regime,
thus allowing contact discontinuities, while the Greenshield one being genuinely
nonlinear in congested regime thus exhibiting rarefaction waves.

The model parameters to be calibrated from data are the following: the maximum speed $\vmax$, the threshold density $\rhof$ from the free-flow to the congested phase, the density $\rho_c$ in which the flux function reaches his maximum value, and a lower and upper bound for $w$, denoted by $w_L$ and $w_R$ respectively. Moreover, we set the maximal density $\rhom$ as a property of the road.\\
As in \cite{FanSunPiccoliSeiboldWork2017}, we assume the Greenshields model in the free-flow regime, i.e.\
\begin{equation*}\label{eq:Qf}
Q_f(\rho) = \rho \vmax\left(1-\frac{\rho}{\rhom}\right),
\end{equation*}
and as a novelty we define the flux function $Q_c(\rho,w)$ in the congested phase, as a convex combination of a lower-bound function $f(\rho)$ and an upper-bound function $g(\rho)$. In particular, we set
\begin{equation}
\label{eq:f}
f(\rho)= \rhof\vmax\left(1-\frac{\rho}{\rhom}\right)
\end{equation}  
as the straight-line which connects $\left(\rho_{f},Q_{f}(\rho_{f})\right)$ with $(\rhom,0)$, and 
\begin{equation}
\label{eq:g}
g(\rho)=\rho \vmax\left(1-\frac{\rho}{\rhom}\right)
\end{equation}
which corresponds to the free-flow phase flux function. Defining 
\begin{equation}
\lambda(w)=\frac{w-\wl}{\wr-\wl},
\label{eq:lambda}
\end{equation} 
then our flux function $Q_{c}(\rho,w)$ is
\begin{equation*}\label{eq:Qc}
Q_{c}(\rho,w) = (1-\lambda(w))f(\rho)+\lambda(w) g(\rho),
\end{equation*}
with $f$ and $g$ given in \eqref{eq:f} and \eqref{eq:g} respectively.
The resulting flux function is
\begin{equation}\label{eq:QcgarzFinale}
Q(\rho,w) = \begin{dcases}
\disp\rho \vmax\left(1-\frac{\rho}{\rhom}\right) &\quad\text{if $0\leq\rho\leq\rho_f$}\smallskip\\
(1-\lambda(w))f(\rho)+\lambda(w) g(\rho) &\quad\text{if $\rho_f\leq\rho\leq\rhom$}.
\end{dcases}
\end{equation}

\begin{proposition}
The flux function \eqref{eq:QcgarzFinale} verifies the properties \ref{q2}-\ref{q4} and the property \ref{q1} for all $\rho\neq\rhof$.
\end{proposition}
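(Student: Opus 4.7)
The plan is a direct verification of each property, exploiting that $Q(\cdot,w)$ is piecewise a polynomial of degree at most two in $\rho$, so the only potentially delicate point is the junction at $\rho=\rhof$. For Property \ref{q2}, substituting $\rho=\rhom$ in \eqref{eq:QcgarzFinale} gives $f(\rhom)=g(\rhom)=0$, hence $Q(\rhom,w)=0$ for every $w$. For Property \ref{q4}, since $\lambda'(w)=1/(\wr-\wl)>0$, one computes
\begin{equation*}
\frac{\de Q}{\de w}(\rho,w)=\frac{g(\rho)-f(\rho)}{\wr-\wl}=\frac{(\rho-\rhof)\vmax}{\wr-\wl}\left(1-\frac{\rho}{\rhom}\right),
\end{equation*}
which is strictly positive precisely on $\rhof<\rho<\rhom$.

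For Property \ref{q3} I would split at $\rhof$. On $[0,\rhof]$ the flux coincides with the Greenshields quadratic, whose second derivative is the constant $-2\vmax/\rhom<0$. On $(\rhof,\rhom)$, $f$ is affine and $g$ is the same Greenshields quadratic, so $\de_\rho^2 Q_c(\rho,w)=-2\lambda(w)\vmax/\rhom$, which is strictly negative whenever $\lambda(w)>0$, covering the interior of the admissible range of $w$.

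The main obstacle, and the reason for the caveat $\rho\neq\rhof$ in Property \ref{q1}, is matching the two branches across the phase transition. Within each piece $Q$ is polynomial in $\rho$, hence $\cuno$, so the only thing to check at $\rhof$ is agreement of values and one-sided derivatives. Continuity is automatic, because $f(\rhof)=g(\rhof)=\rhof\vmax(1-\rhof/\rhom)=Q_f(\rhof)$, so the convex-combination weights drop out. However, writing out the left and right derivatives $Q_f'(\rhof)$ and $(1-\lambda(w))f'(\rhof)+\lambda(w)g'(\rhof)$ and simplifying yields the condition
\begin{equation*}
1-\frac{\rhof}{\rhom}=\lambda(w)\left(1-\frac{\rhof}{\rhom}\right),
\end{equation*}
which forces $\lambda(w)=1$, i.e.\ $w=\wr$. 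For generic $w$ the two branches therefore carry distinct tangent slopes at $\rhof$, so $Q$ is only continuous (not $\cuno$) at the junction, which is exactly what the proposition asserts by excluding $\rho=\rhof$ from \ref{q1}.
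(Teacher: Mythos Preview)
Your proof follows the same direct-verification strategy as the paper, treating each property in turn and exploiting that both branches of $Q$ are explicit polynomials. You actually go a bit further than the paper's own proof: the paper simply asserts $\cuno$ on each piece without examining the junction, whereas you compute the one-sided slopes at $\rhof$ and show that the $\cuno$-matching genuinely fails for generic $w$, which is precisely what justifies the exclusion in \ref{q1}; you also correctly flag that strict concavity in the congested branch requires $\lambda(w)>0$, a boundary subtlety the paper glosses over.
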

\begin{proof}
The function $Q$ is $\cuno$ in $[0,\rhom]\backslash\{\rhof\}$ by construction: the free-flow part $Q_{f}$ is $\cuno$ for all $\rho$, and the congested one is a convex combination of $\cuno$ functions.
Condition \ref{q2} follows directly from the definition of $f$ and $g$ which satisfy $f(\rhom)=g(\rhom)=0$. Condition \ref{q3} is easily verified by the strictly negativity of the second derivative of function in \eqref{eq:QcgarzFinale}.
Finally, condition \ref{q4} follows from the relation
\begin{equation*}
\frac{\partial Q(\rho,w)}{\partial w} = \lambda^{\prime}(w)(g(\rho)-f(\rho))
\end{equation*}
which is strictly positive since $g(\rho)>f(\rho)$ by construction.
\end{proof}
\begin{remark}
To verify condition \ref{q1} for all $\rho\in[0,\rhom]$, it is sufficient to choose a different function $f$ that joins with regularity to free-flow regime. 
\end{remark}

Once the flux function is defined, the velocity function is obtained as 
\begin{equation}
V(\rho,w) = \frac{Q(\rho,w)}{\rho}.
\label{eq:velocityV}
\end{equation}

%%%%%%%%%%%%%%%%%%%%%%%%%%%%%%%%%%%%%%%%
\subsubsection{Acceleration function}\label{sec:acc}
In time-continuous second-order models, the acceleration equation is a second
partial differential equation of the general form 
\begin{equation*}
\frac{Dv(x,t)}{Dt} = \left(v_t(x,t) + v(x,t) v_x(x,t)\right) = a(\rho(x,t),v(x,t)),
\end{equation*}
where $\frac{D\cdot}{Dt}$ is the total derivative and $v$ is the speed function. 
This equation implies that the rate of change of the local speed $\frac{Dv(x,t)}{Dt} = \left(v_t + v v_x\right)$ in Lagrangian coordinates is equal to an \textit{acceleration function} $a(x,t)=a(\rho(x,t),v(x,t))$.

In CGARZ model we derive the function acceleration by computing the total derivative of $V(\rho,w)$, i.e.\
\begin{equation*}
	a(x,t) = \frac{Dv(x,t)}{Dt} = v_t(x,t) + v(x,t) v_x(x,t),
\end{equation*}
where
%\begin{equation*}
$v(x,t)=V(\rho(x,t),w(x,t)),\, v_t = V_\rho \rho_t + V_w w_t, \, v_x = V_\rho\rho_x+V_w w_x.$
%\end{equation*}
Then,
\begin{equation*}
a(x,t) = \left(\rho_t + v \rho_x\right)V_\rho+  \left(w_t + v w_x\right)V_w,
\end{equation*}
and by applying the homogeneous equation in \eqref{eq:CGARZ1} for $w$ we get
\begin{equation}\label{eq:accAnalitica}
a(x,t) = V_\rho\left(\rho_t + v \rho_x\right) = -V_\rho \rho v_x.
\end{equation}

\section{Estimating emissions by traffic quantities}\label{sec:Emissions}
In this section we analyze the second step of our tool: the emission model. Specifically, we describe the emission model proposed in \cite{panis2006elsevier} appropriate for several air pollutants.
Emitted by different sources, primary and secondary air pollutants mainly include: sulphur oxides, nitrogen oxides ($\nox$), volatile organic compounds (VOC), particulates, free radicals, toxic metals, etc.  \cite{wayne1991CP,seinfeld2016JWS}.
In areas with heavy street traffic and high amounts of UV radiation, 
ozone ($\otre$), $\nox$ and hydrocarbons are  of particular interest.

The existence of high concentration of ozone in the urban atmosphere suggests to have an effective control of some other pollutants such as carbon monoxide and sulphur dioxide (SO$_{2}$), ozone is a secondary pollutant formed in the ambient air through a complex set of sunlight initiated reactions of its precursor, primary emission of VOC, catalyzed by hydrogen oxide radicals, and of $\nox$ \cite{jacob2000AE,song2011APR}.
For the complexity of the phenomena involved, in this paper we focus on  emission models for only $\nox$.

\subsection{Emission Model}\label{sec:emissionModel}
We use the microscopic emission model proposed in \cite{panis2006elsevier}. This model gives the instantaneous emission rate of four pollutant types: carbon dioxide, nitrogen oxides, volatile organic compounds and particulate matter. The emission rate $E_{i}$ of vehicle $i$ at time $t$ is computed using vehicle's instantaneous speed $v_i(t)$ and acceleration $a_i(t)$
\begin{equation}\label{eq:EmissionRate}
E_i(t) = \max \{ E_0 ,  f_1 + f_2 v_i(t) + f_3 v_i(t)^2 + f_4 a_i(t) + f_5 a_i(t)^2 + f_6 v_i(t) a_i(t)\},
\end{equation}
where $E_0$ is a lower-bound of emission and $f_1$ to $f_6$ are emission constants. The parameters are experimentally calibrated using non-linear multiple regression techniques as explained in \cite{panis2006elsevier}. Both the emission lower-bound and coefficients differ according to the type of pollutant and of vehicle (i.e.\ petrol car, diesel car, truck, etc.).  We are particularly interested in the $\nox$ emission rate, whose coefficients depend on whether the vehicle is in acceleration (defined as $a_i(t)\geq \myunit{-0.5}{\  \mymeter\per\mysecond^{2}}$) or deceleration (with $a_i(t)< \myunit{-0.5}  {\mymeter\per\mysecond^{2}}$) mode, where $\mymeter$ denotes meter and $\mysecond$ second. In Table \ref{table:EmissionRateParam}  we report the $\nox$ emission coefficients for a petrol car, for which $E_{0}=0$. See \cite[Table 2]{panis2006elsevier} for the coefficients related to the other pollutants and vehicles type.
\begin{table}[h]
\renewcommand{\arraystretch}{1.5}
\small
\centering
\begin{tabular}{|c|c|c|c|c|c|c|}\hline
Vehicle mode & $f_1$ $\left[\disp\frac{\mygram}{\mysecond}\right]$ & $f_2$ $\left[\disp\frac{\mygram}{\mymeter}\right]$ & $f_3$ $\left[\disp\frac{\mygram\, \mysecond}{\mymeter^{2}}\right]$ & $f_4$ $\left[\disp\frac{\mygram\, \mysecond}{\mymeter}\right]$& $f_5$ $\left[\disp\frac{\mygram\, \mysecond^{3}}{\mymeter^{2}}\right]$& $f_6$ $\left[\disp\frac{\mygram \, \mysecond^{2}}{\mymeter^{2}}\right]$ \\
\hline
If $a_i (t) \geq -0.5\,\mymeter\per\mysecond^2$ &  6.19e-04  & 8e-05  & -4.03e-06  & -4.13e-04  & 3.80e-04  & 1.77e-04\\
If $a_i (t) <-0.5\,\mymeter\per\mysecond^2$ &  2.17e-04  & 0  & 0  & 0  & 0  & 0\\\hline
\end{tabular}
\caption{$\nox$ parameters in emission rate formula \eqref{eq:EmissionRate} for a petrol car, where $\mygram$ denotes gram, $\mymeter$ meter and $\mysecond$ second.}
\label{table:EmissionRateParam}
\end{table}
\begin{remark}
In this work we assume to have a unique typology of vehicles, i.e.\ petrol cars. The integration with other types of vehicles and comparison of the corresponding emission rates is an interesting subject of study, it would require the use of traffic models for multi-class vehicles and this goes beyond the scopes of this work.
\end{remark}

Assuming to have $N$ vehicles in a stretch of road going all at the same speed $\bar v$, with the same acceleration $\bar a$, the emission rate is given by the $N$ contributes of the vehicles, such that
\begin{equation}
E(t)=\sum_{i=1}^N E_i(t) = N \max \{ E_0 ,  f_1 + f_2 \bar v(t) + f_3 \bar v(t)^2 + f_4 \bar a(t) + f_5 \bar a(t)^2 + f_6 \bar v(t) \bar a(t)\}.
\label{eq:emissioni}
\end{equation}
In particular this equation can be used in conjunction with quantities provided by a numerical solution to a macroscopic model such as the CGARZ one.

\begin{remark}
We make use of a particular emission model. However, the large majority
of microscopic emissions models are based on a combination of
polynomial expression in the velocity and acceleration, see for instance \textup{\cite{Barth2000,Smit2010}} and references therein. Thus our analysis
can be easily adapted to other models.
\end{remark}

\section{Chemical reactions}
%\section{Estimating the production of $\otre$}
\label{Air}
This section is devoted to the third step of our tool: the chemical reactions associated to the pollutants under investigation. In particular, in this work we focus on $\nox$ gases and the reactions
%Here we report the main chemical reactions of $\nox$ gases 
which lead to the $\otre$ formation. 
$\nox$ gases are usually produced from the reaction among nitrogen and oxygen ($\odue$) during combustion of fuels, such as hydrocarbons, in air, especially at high temperatures, such as occurs in car engines \cite{Omid2015}.
They include nitrogen oxide ($\no$) and nitrogen dioxide ($\nodue$); the latter is classified as a secondary pollutant. NO is produced according to the following reaction with $\odue$ and nitrogen (N$_{2}$) \cite{manahan2017CRCP}, 
\begin{equation*}
	\mathrm{N}_2 + \odue \longrightarrow 2\no,
\end{equation*}
where the rate of the chemical reaction can be increased by raising the temperature.
In the combustion mechanism, $\no$ can react with $\odue$ thus forming $\nodue$, 
\begin{equation*}
2\no + \odue \longrightarrow 2\nodue .
\end{equation*}
$\nodue$  is a very reactive compound that can be photo-dissociated into atomic oxygen ($\ouno$), this mechanism is considered one of key steps in the formation of tropospheric ozone \cite{atkinson1984CR}.
Nitrogen oxides and volatile organic compounds are considered ozone precursors, where  traffic is the main source (more than $50\%$ of anthropogenic source). 
The photolysis of $\nodue$ is speeded up in warmer conditions and with more UV-light. In the troposphere with strong solar irradiation, $\nodue$ is a relevant precursor substance for the ozone in photochemical smog and it is due to the following reactions: 
\begin{align}
\label{eq:reazione1}
&\nodue+ h\nu \overset{k_{1}}\longrightarrow \ouno+\no \\%\qquad\qquad(\lambda < 420\  \mbox{nm})\\
\label{eq:reazione2}
&\ouno + \odue + \mathrm{M} \overset{k_{2}}\longrightarrow \otre+\mathrm{M},
\end{align}
where $h$ is Planck's constant, %$\lambda$ is the photon wavelength, 
$\nu$ its frequency and $k_1$, $k_2$ are the reaction rate constants.
M is a chemical species, such as $\odue$ or N$_2$, that adsorbs the excess of energy generated in reaction \eqref{eq:reazione2} \cite{manahan2017CRCP}. 
Moreover, in presence of $\no$, $\otre$ reacts with it and this reaction destroys the ozone and reproduces the $\nodue$, with kinetic constant $k_3$:
\begin{equation}
\otre + \no \overset{k_{3}}\longrightarrow \odue + \nodue.
\label{eq:reazione3}
\end{equation}
This means that the previous reactions do not result net ozone production, because the reactions only recycle $\otre$ and $\nox$. 
Net ozone production occurs when other precursors, such as carbon monoxide, methane, non-methane hydrocarbons or certain other organic compounds (volatile organic compounds) are present in the atmosphere and fuel the general pathways to tropospheric $\otre$ formation. Although it would be interesting to consider the whole ground-level ozone production, here we focus only on the photochemical smog reactions \eqref{eq:reazione1}, \eqref{eq:reazione2} and \eqref{eq:reazione3}.

For vehicle's emissions, the maximum $\nodue$ concentration is recorded at medium engine load and low engine speed. At high speed, the $\nodue$ emissions are reduced to a minimum (in most cases less than 4\%) \cite{rossler2017MTZ}. 
According to a recent study using British data \cite{carslaw2011AE}, the fraction of $\nodue$ in vehicle $\nox$ emissions (all fuels) increased from around 5-7\% in 1996 to 15-16\% in 2009. 
For this reason we will consider in our simulation a $\nodue$ concentration equal to 15\% of $\nox$.

\medskip
Now, we set up the system of ordinary differential equations associated to the chemical reactions \eqref{eq:reazione1}, \eqref{eq:reazione2} and \eqref{eq:reazione3}.
We assume that the reactions take place in a volume 
of dimension $\vol=\dx\dy\dz$, 
during the daily hours and that the chemical specie M in \eqref{eq:reazione2} is $\odue$. 
Moreover, we add the traffic emissions contribution as a source term for the concentration of $\no$ and $\nodue$. Hence, we denote the chemical species concentration by $[\cdot]=[\frac{\text{weight unit}}{\text{volume unit}}]$ and we define the variation of the concentration of $\nox$ in $\vol$, at each time $t$ as  
\begin{equation}\label{eq:nox_source}
S_{\nox}(t) = \disp\frac{E_{\nox}(t)}{\vol},
\end{equation}
where the emission rate $E_{\nox}(t)$ is given by \eqref{eq:emissioni}.

Let us denote by $\CC=(\cc_{1},\cc_{2},\cc_{3},\cc_{4},\cc_{5})$ the vector of the five chemical species concentration, i.e.\ $\cc_1(t) = [\ouno]$, $\cc_2(t) = [\odue]$, $\cc_3(t) = [\otre]$, $\cc_4(t) = [\no]$ and $\cc_5(t) = [\nodue]$. The final system of equations, given by coupling the three reactions \eqref{eq:reazione1}-\eqref{eq:reazione3} and the source term \eqref{eq:nox_source}, becomes 
\begin{equation}\label{eq:sistemone}
\begin{cases}
\disp\frac{d\cc_1}{dt}= k_1\, \cc_5-k_2\, \cc_1\,\cc_2^2\smallskip\\
\disp\frac{d\cc_2}{dt} =  k_3\,\cc_3\,\cc_4- k_2\, \cc_1\,\cc_2^2 \smallskip\\
\disp\frac{d\cc_3}{dt} = k_2\,\cc_1\,\cc_2^2 - k_3\,\cc_3\,\cc_4  \smallskip\\
\disp\frac{d\cc_4}{dt} = k_1\,\cc_5  - k_3\,\cc_3\,\cc_4 + (1-p)\,s(t) \smallskip\\
\disp\frac{d\cc_5}{dt} = k_3\,\cc_3\,\cc_4- k_1\,\cc_5  + p\, s(t),
\end{cases}
\end{equation}
where $p=0.15$ corresponding to 15\% of $\nodue$ derived from the emission rate of $\nox$, $s(t)$ is the source term defined in \eqref{eq:nox_source} and the parameters $k_{1}$, $k_{2}$ and $k_{3}$, shown in Table \ref{tab:parametriK}, are estimated according to \cite{jacobson2005CUP}. System \eqref{eq:sistemone} can be rewritten in vectorial form as
\begin{equation}\label{eq:sistemaVett}
	\frac{d\CC(t)}{dt}=G(\CC(t))+S(t),
\end{equation}
where $G$ represents the chemical reactions and $S$ the source term.

\begin{table}[h!]
\centering
\small
\renewcommand{\arraystretch}{1.2}
\begin{tabular}{|c|c|c|}\hline
$k_{1}$ &  $k_{2}$ & $k_{3}$ \\\hline
$\myunit{0.02}{\mysecond^{-1}}$ & $\myunit{6.09\times 10^{-34}}{\mycenti\mymeter^6}\,\mathrm{ molecule}^{-2}\,\mysecond^{-1}$ & $\myunit{1.81\times10^{-14}}{\mycenti\mymeter^3}\,\mathrm{molecule}^{-1}\,\mysecond^{-1}$ \\\hline
\end{tabular}
\caption{Parameters $k_{1}$, $k_{2}$, and $k_{3}$ of system \eqref{eq:sistemone}, where $\mycenti\mymeter$ denotes centimeter, $\mysecond$ second and $\mathrm{molecule}$ the number of molecules.}
\label{tab:parametriK}
\end{table}

%%%%%%%%%%%%%%%%%%%%%%%%%%%%%%%
\section{Diffusion of chemical species in air}\label{sec:diff}
In this section we deal with the diffusion of the chemical species in air. We refer to \cite{alvarez2017JCAM,alvarez2018MCRF, bayen2014,stockie2011SIAM} for some examples of study of pollutants diffusion through PDEs. Here we propose two different approaches to integrate the traffic contribution into a reaction-diffusion model to analyze the spread of pollutants in the atmosphere. 

\subsection{Vertical diffusion}
Let us consider a domain $\Omega=[0,L]\times[0,H]$, where $L$ is the length of the road and $H$ is the height from the source of $\nox$ emissions, during a time interval $[0,T]$. We assume that $\CC$ is constant along the third direction, therefore our model is two-dimensional and $\CC$ still represents the concentration of pollutants in unit of weight per unit of volume. The first approach we propose integrates the traffic contribution into the boundary conditions of the following reaction-diffusion problem
\begin{equation}\label{eq:diffVert}
	\begin{cases}
		\disp\frac{\de\CC}{\de t}(x,y,t)-\mu\Delta\CC(x,y,t) = G(\CC(x,y,t)) &\quad\text{in $\Omega\times(0,T]$}\smallskip\\
		\CC(x,y,0) = \CC_{0}(x,y) &\quad\text{in $\Omega$}		
	\end{cases}
\end{equation}
where $\mu$ is the diffusion coefficient, $G(\CC)$ introduced in \eqref{eq:sistemaVett} represents the chemical reactions and $\CC_{0}$ is the initial datum. We assume that $\mu$ is the same for all the five chemical species under analysis, where $\mu$ is a typical value ($10^{-8}\,\km^{2}\per\myhour$) for aerosols \cite{Sportisse2010}, which also include our pollutants.
We fix homogeneous Neumann boundary condition on the left, upper and right boundary of $\Omega$ for all the chemical species, i.e.\ for $t\in[0,T]$ we have
\begin{align*}
	\disp\frac{\de\CC}{\de y}(x,H,t) = 0 \quad\text{for $x\in[0,L]$,}\qquad\frac{\de\CC}{\de x}(0,y,t) = \disp\frac{\de\CC}{\de x}(L,y,t) = 0 \quad\text{for $y\in[0,H]$.}
\end{align*}
The lower boundary of $\Omega$ has homogeneous Neumann boundary condition for the first three chemical species and Dirichlet condition for the other two, i.e.\ for $x\in[0,L]$ and $t\in[0,T]$ we have
\begin{align}
	\nonumber\disp\frac{\de\cc_{1}}{\de y}(x,0,t) &= \frac{\de\cc_{2}}{\de y}(x,0,t) =\frac{\de\cc_{3}}{\de y}(x,0,t) = 0 \\
	\cc_{4}(x,0,t) &= (1-p)e(x,t)\label{eq:BC1}\\
	\cc_{5}(x,0,t) & = p\,e(x,t),\label{eq:BC2}
\end{align}
where $e(x,t)$ is given by the emission rate $E_{\nox}(x,t)$ per unit of time over unit of volume.

%%%%-------------------------------%%%%
\subsection{Horizontal diffusion}
We now consider a horizontal domain $\Omega=[0,\lx]\times[0,\ly]$, where $\lx$ is the length of the road and $\ly$ is the length of the area transversal to the road where the pollutants spread during a time interval $[0,T]$. Again, we assume that the concentration of pollutants $\CC$ is constant along the third direction, reducing to the following two-dimensional reaction-diffusion problem
\begin{equation}\label{eq:diffOriz}
	\begin{cases}
		\disp\frac{\de\CC}{\de t}(x,y,t)+\vento\nabla\CC(x,y,t)-\mu\Delta\CC(x,y,t) = G(\CC(x,y,t)) +S(x,y,t) &\quad\text{in $\Omega\times(0,T]$}\smallskip\\
		\CC(x,y,0) = \CC_{0}(x,y) &\quad\text{in $\Omega$}		
	\end{cases}
\end{equation}
where $\vento=(\ventox,\ventoy)$ is the wind, $\mu$ is the diffusion coefficient \cate{(the same for all the chemical species)}, $G(\CC)$ represents the chemical reactions, $S(x,y,t)$ is the source term of $\nox$ emission rates and $\CC_{0}$ is the initial datum. Unlike the previous case, where the contribution of the road is given through Dirichlet boundary conditions, in this case we add a source term of $\nox$ emissions in the reaction-diffusion equation in correspondence of the road, placed in the middle of the $y$-axis. The source term $S(x,y,t)$ is given by a vector always null except for $y=\ly/2$, where we have 
\begin{equation*}
	S(x,\ly/2,t) = (0,0,0,(1-p)s(x,t),p\,s(x,t))
\end{equation*}
with $s(x,t)$ given by \eqref{eq:nox_source}.
The boundary of $\Omega$ is treated through homogeneous Neumann boundary conditions for the all the five chemical species.
%\begin{align*}
%	\disp\frac{\de\CC}{\de y}(x,0,t) =\frac{\de\CC}{\de y}(x,\ly,t) = 0 \quad\text{for $x\in[0,\lx]$,}\qquad\frac{\de\CC}{\de x}(0,y,t) = \disp\frac{\de\CC}{\de x}(\lx,y,t) = 0 \quad\text{for $y\in[0,\ly]$.}
%\end{align*}

%%%%%%%%%%%%%%%%%%%%%%%%%%%%%%%
\section{From traffic quantities to the production and diffusion of the ozone}\label{sec:sistemone}
In this section we merge the traffic model with air pollutants dynamics, summarizing the four steps of the proposed tool and introducing the numerical methodology. The procedure is the following:
\begin{enumerate}
\item Estimate the traffic quantities, i.e.\ the density and the speed of vehicles with the CGARZ model \eqref{eq:CGARZ2} and the analytical acceleration with \eqref{eq:accAnalitica}.
\item Estimate the emission rate with \eqref{eq:emissioni} and the corresponding source term in the chemical reactions per unit of volume given by \eqref{eq:nox_source}.
\item Solve system \eqref{eq:sistemone} to estimate the concentration of the chemical species at street level.
\item Solve system \eqref{eq:diffVert} or \eqref{eq:diffOriz} to estimate the diffusion of the chemical species concentration in air.
\end{enumerate}
We now describe the numerical implementation. Let us consider the road $[0,L]$ during the time interval $[0,T]$ discretized via a grid of $N_{x}\times N_{t}$ cells of length $\dx\times\dt$.
For each cell centered at $x_{i}$ and time $t^{n}$ of the numerical grid our aim is then to estimate the traffic quantities $\rho^{n}_{i}$, $v^{n}_{i}$, $a^{n}_{i}$, the emission rates $E^{n}_{i}$ and the source term $s^{n}_{i}$. The resolution of system \eqref{eq:sistemone} gives us the concentration of the five chemical species $\CC^{n}_{i}$ produced on the road. Systems \eqref{eq:diffVert} and \eqref{eq:diffOriz} involve a two-dimensional domain $\Omega$  discretized via a grid of steps $\dx\times\dy$ and describe the diffusion of the concentration $\CC^{n}_{ij}$ in the air.

\subsection{Numerical method for the CGARZ model}\label{sec:traffNum}
The CGARZ model \eqref{eq:CGARZ2} is numerically solved using the 2CTM scheme  described in \cite{FanSunPiccoliSeiboldWork2017}, which is a Godunov type scheme and can be used for any GSOM. Here we describe the general scheme and how to apply it to the CGARZ model. 

Let us consider the numerical grid introduced above and set $v^{n}_{i}=V(\rho^{n}_{i},w^{n}_{i})$. The 2CTM scheme is described by the system
\begin{align*}
\rho^{n+1}_{i}&=\rho^{n}_{i}-\frac{\dt}{\dx}(F^{\rho,n}_{i+1/2}-F^{\rho,n}_{i-1/2})\\
y^{n+1}_{i} &=y^{n}_{i}-\frac{\dt}{\dx}(F^{y,n}_{i+1/2}-F^{y,n}_{i-1/2}),
\end{align*}
where $F^{\rho,n}_{i\pm1/2}$ and $F^{y,n}_{i\pm1/2}$ are the numerical fluxes. 
In order to define $F^{\rho,n}_{i-1/2}$ and $F^{y,n}_{i-1/2}$, consider the two constant left and right states $(\rho^{-},w^{-})=(\rho^n_{i-1},w^n_{i-1})$ and $(\rho^{+},w^{+})=(\rho^n_{i},w^n_{i})$ respectively, and compute the solution of the Riemann problem between the two consecutive cells centered in $x_{i-1}$ and $x_{i}$,
\begin{equation*}
	\begin{array}{lcr}
	  \left\{\begin{array}{c}
	   \rho_t+(\rho v)_x=0\\
		y_t+(y v)_x=0\\
	  \end{array}\right.
	  & \mbox{ with } &
	  (\rho_{0},y_{0}) =\begin{array}{cc}
	   (\rho^{-},\rho^{-}w^{-}) &\quad\text{if $x<x_{i-1/2}$}\\
			(\rho^{+},\rho^{+}w^{+}) &\quad\text{if $x\geq x_{i-1/2}$}.
	  \end{array}
	\end{array}
\end{equation*}
The solution of the Riemann problem is defined by an intermediate state $(\rho^{*},w^{*})$ separated from the left and right state by 
a 1-shock or rarefaction wave and a 2-contact discontinuity respectively. The Riemann invariants \cite{FanHertySeibold2013} $w = const.$ and $V(\rho,w)=const.$, imply that $w^{*}=w^{-}$ and $V(\rho^{*},w^{*})=\min\{v^{+},V(0,w^{-})\}$ with $v^{+}=V(\rho^{+},w^{+})$. Note that the minimum between the two velocities is required since vehicles from the left try to adapt their velocity to $v^{+}$, but if $v^{+}>V(0,w^{-})$ they cannot exceed their maximum  speed $V(0,w^{-})$.
Let us introduce now the supply and demand functions $S$ and $D$ defined as
\begin{equation*}
	S(\rho,w)=\begin{cases}
		Q^{\mathrm{max}}(w) &\quad\text{if $\rho\leq\rho^{cr}(w)$}\\
		Q(\rho,w) &\quad\text{if $\rho>\rho^{cr}(w)$}\\
	\end{cases}\qquad
	D(\rho,w)=\begin{cases}
		Q(\rho,w) &\quad\text{if $\rho\leq\rho^{cr}(w)$}\\
		Q^{\mathrm{max}}(w) &\quad\text{if $\rho>\rho^{cr}(w)$},\\
	\end{cases}
\end{equation*}
with $\rho^{cr}(w)$ critical density, i.e.\ the value where the flux curve identified by $w$ attends its maximum $Q^{\mathrm{max}}(w)$. The numerical flux is then defined as
\begin{equation}\label{eq:numflux}
	F^{\rho,n}_{i-1/2}=\min\{D(\rho^{n}_{i-1},w^{n}_{i-1}),S(\rho^{n}_{i-1/2},w^{n}_{i-1/2})\}
\end{equation}
where $(\rho^{n}_{i-1/2},w^{n}_{i-1/2})$ is the value of the intermediate state described above. Moreover, since $y=\rho w$ the numerical fluxes $F^{y,n}_{i\pm1/2}$ are such that
\begin{equation*}
	F^{y,n}_{i-1/2}=w^{n}_{i-1/2}F^{\rho,n}_{i-1/2}\qquad\text{and}\qquad F^{y,n}_{i+1/2}=w^{n}_{i}F^{\rho,n}_{i+1/2}.
\end{equation*}
By construction of the flux function for the CGARZ model, the condition $v^{+}>V(0,w^{-})$ never holds, since $V(0,w)=\vmax$ for any $w$. Hence the intermediate state $(\rho^{*},w^{*})$ is such that $w^{*}=w^{-}$ and $V(\rho^{*},w^{*})=v^{+}$. 
In \eqref{eq:numflux} we then get $w^n_{i-1/2}=w^n_{i-1}$ and $\rho^{n}_{i-1/2}$ such that $V(\rho^n_{i-1/2},w^n_{i-1})=V(\rho^n_{i},w^n_{i})$. 

The stability of the scheme is guaranteed by the CFL condition 
\begin{equation}\label{eq:CFL}
	\dt\leq \dx/(2\Lambda)
\end{equation}
with $\Lambda = \max_{j=1,2}|\lambda_{j}|$ and $\lambda_{j}$ eigenvalues of the Jacobian matrix associated to \eqref{eq:CGARZ2}. In our case  $\Lambda$ coincides with the maximum velocity $\vmax$.

%%%%%%%%%%%%%%%%%%%%%%
\subsubsection{Evaluating the acceleration}
As described in Section \ref{sec:acc}, we can approximate the acceleration directly derived from the theoretical model by \eqref{eq:accAnalitica} as
\begin{equation}\label{eq:accNum}
	a^{n}_{i}=-V_\rho(\rho^{n}_{i},w^{n}_{i}) \rho^{n}_{i} \frac{v^{n}_{i+1}-v^{n}_{i-1}}{2\dx}.
\end{equation}
Here we also describe a discrete formulation for the acceleration recovered by average quantities, as an alternative to \eqref{eq:accNum}.
We follow the approach proposed in \cite{luspay2010IEEE,zegeye2013elsevier} for the particular case of a single road with $n_{\ell}$ lanes. 
To define the average acceleration of a cell, we distinguish between the \emph{temporal acceleration} and the \emph{spatial-temporal acceleration}. 
The temporal acceleration refers to the change of the average speed for the vehicles which remain in the same cell $i$ between time $t^{n}$ and $t^{n+1}$,
\begin{equation}
a^{tmp}_{i}(n) = \frac{ v_{i}^{n+1}- v_{i}^{n}}{\dt}.
\label{eq:accTemp}
\end{equation}
Let $ q_{i}^{n}$ be the flux of vehicles which cross the cell $i$ between time $t^{n}$ and $t^{n+1}$. The total number of vehicles which remain in the cell and therefore which are subjected to the temporal acceleration is
$c^{tmp}_{i}(n) = n_{\ell}\dx\rho_{i}^{n}-\dt q_{i}^{n}$.
The spatial-temporal acceleration refers to the change of the average speed for the vehicles which move from a cell to the following one. It is defined as
\begin{equation}
a^{spt}_{i}(n) = \frac{ v_{i+1}^{n+1}- v_{i}^{n}}{\dt},
\label{eq:accSpat}
\end{equation}
and the total number of vehicles subjected to this acceleration is 
$c^{spt}_{i}(n) = \dt q_{i}^{n}$.
Combining the definitions of temporal \eqref{eq:accTemp} and spatial-temporal \eqref{eq:accSpat} acceleration, we can introduce the average acceleration of vehicles in cell $i$ at time $t^{n}$ as
\begin{equation*}
a_{i}^{n}=\frac{a^{tmp}_{i}(n) c^{tmp}_{i}(n) +a^{spt}_{i}(n) c^{spt}_{i}(n) }{c^{tmp}_{i}(n) +c^{spt}_{i}(n) },
\end{equation*}
which, after some computations, can be rewritten as
\begin{equation}
a_{i}^{n}=\frac{ v_{i}^{n+1}-v_{i}^{n}}{\dt}+ v_{i}^{n}\frac{ v_{i+1}^{n+1}- v_{i}^{n+1}}{\dx}.
\label{eq:acc}
\end{equation}
Hereafter we refer to this formulation as discrete acceleration.

\subsection{Numerical tests}\label{sec:test}
In this section we show some examples illustrating the several steps which lead to the estimate of the production of ozone. First of all we validate the emission model to estimate the $\nox$ emission rates with a numerical test using the NGSIM dataset \cite{TrafficNGSIM}. Then we provide some tests related to the complete procedure, focusing on the production of ozone. In particular, we investigate the impact of traffic lights on pollutants production, looking for strategies to reduce it.

%%%%%%%%%%%%%%%%%%%%%%%%%%
\subsubsection{Validation of the emission model}
\label{sec:emissioni}
In this section we compare the $\nox$ emission rates given by \eqref{eq:EmissionRate} computed using the NGSIM dataset \cite{TrafficNGSIM} with that given by \eqref{eq:emissioni} computed along numerical solutions to the CGARZ model. In other words, the macroscopic CGARZ model is fed by real data only at initial time, then the emission rate is computed along the numerical solution to CGARZ and compared with that resulting from the NGSIM complete dataset, considered as a \emph{ground truth.}

The NGSIM database contains detailed vehicle trajectory data on the interstate I-80 in California, on April 13, 2005. The area under analysis is approximately 500 meters in length and consists of six freeway lanes. Several video cameras recorded vehicles moving through the monitored area, while a specific software has transcribed the vehicle trajectory data from video. The data include the precise location, velocity and acceleration of each vehicle within the study area every 0.1 seconds. The period analyzed in this work refers to three time slots: 4:00 pm - 4:15 pm, 5:00 pm - 5:15 pm and 5:15 pm - 5:30 pm.

First of all we estimate the flow-density and velocity-density relationships from the dataset. We divide the study area into space-time cells $C^n_i = [x_{i}, x_{i+1}]\times [t^{n}, t^{n+1}]$ of length $\myunit{120}{\mymeter}\times \myunit{4}{\mysecond}$. 
The density in $C^n_i$ is equal to the number of vehicles (denoted by veh) which cross the cell during the time interval $[t^{n},t^{n+1}]$. The velocity in $C^n_i$ is the mean of all the velocities measured in the cell, and the flux is the product between density and velocity. The relationships between flow and density and between velocity and density are shown in the top panels of Figure \ref{fig:QV}. In the two graphs we clearly see two ``clouds''  in which data are concentrated (except a small number of outliers accounting for less than 3\% of points). From the analysis of these data we have estimated a possible set of model parameters: $\vmax=\myunit{65}{\mykilo\mymeter\per\myhour}$, $\rho_f=\myunit{110}{\mathrm{veh}\per\mykilo\mymeter}$, $\rhom=\myunit{800}{\mathrm{veh}\per\mykilo\mymeter}$, $\rho_{c}=\rhom/2$, $\wl= 5687$ and  $\wr=13000$, where $\mykilo\mymeter$ denotes kilometer, $\myhour$ hour and $\mathrm{veh}$ the number of vehicles. Specifically, the parameters $\vmax$ and $\rhof$ are chosen such that the area enclosed between the curves $f$ and $g$, in \eqref{eq:f} and \eqref{eq:g} respectively, covers more than the 97\% of data points of the real data clouds; $\rhom$ is a property of the road, defined by
\begin{equation*}
\rhom = \frac{\text{Number of lanes}}{\text{Lenght of vehicles}}=\frac{6}{7.5\times10^{-3}\,\mykilo\mymeter},
\end{equation*}
and we set the two extreme $w_{L}$ and $w_{R}$ as
%\begin{equation*}
$w_{L} = g(\rhof) \text{ and } w_{R} = g(\rhoc)$.
%\end{equation*} 
%
The family of curves generated by the data set given above are shown in the bottom panels of Figure \ref{fig:QV}.
\begin{figure}[h!]
\centering
\subfloat
{\includegraphics[width=.406\linewidth]{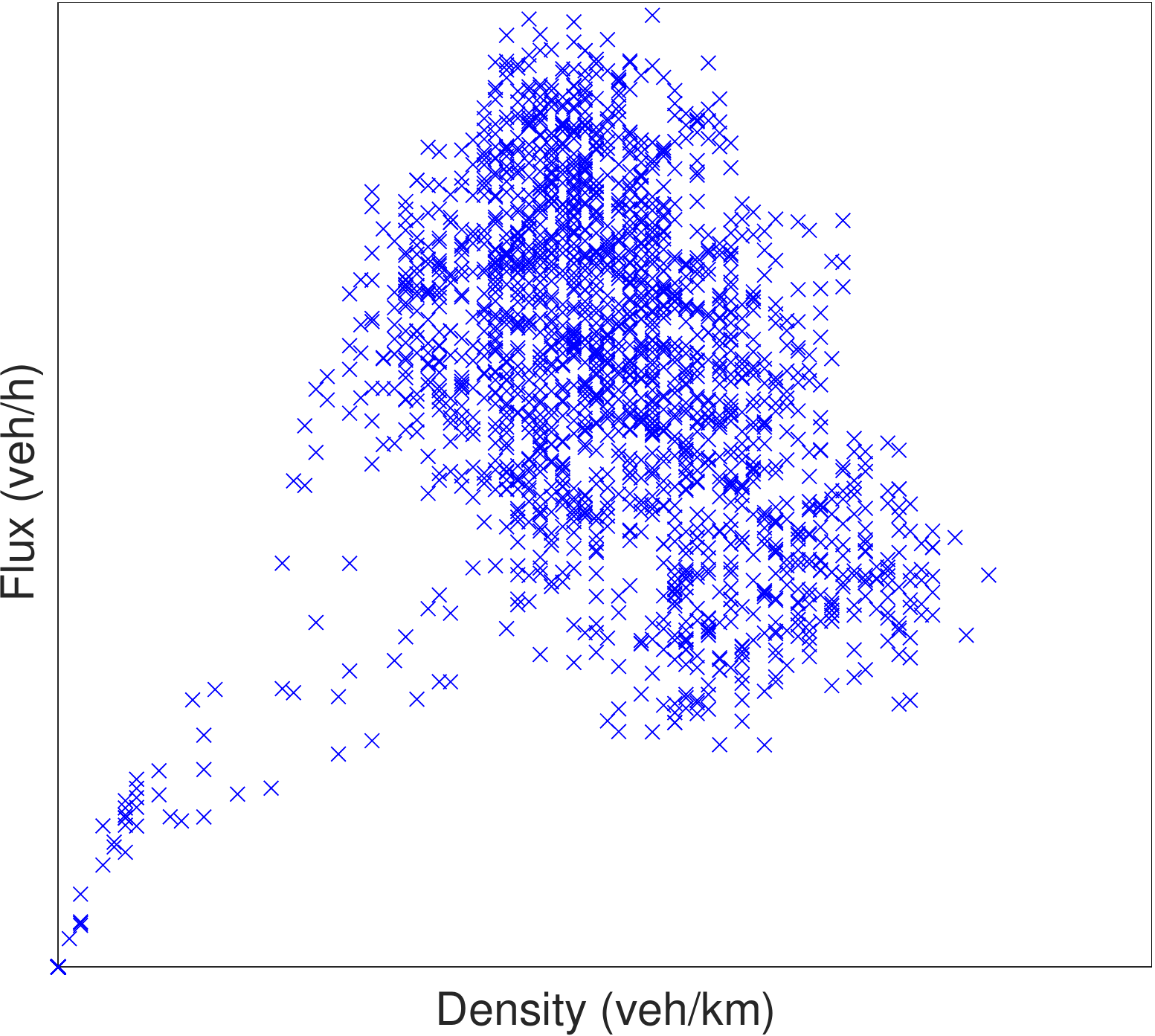}} \quad\,
\subfloat
{\includegraphics[width=.405\linewidth]{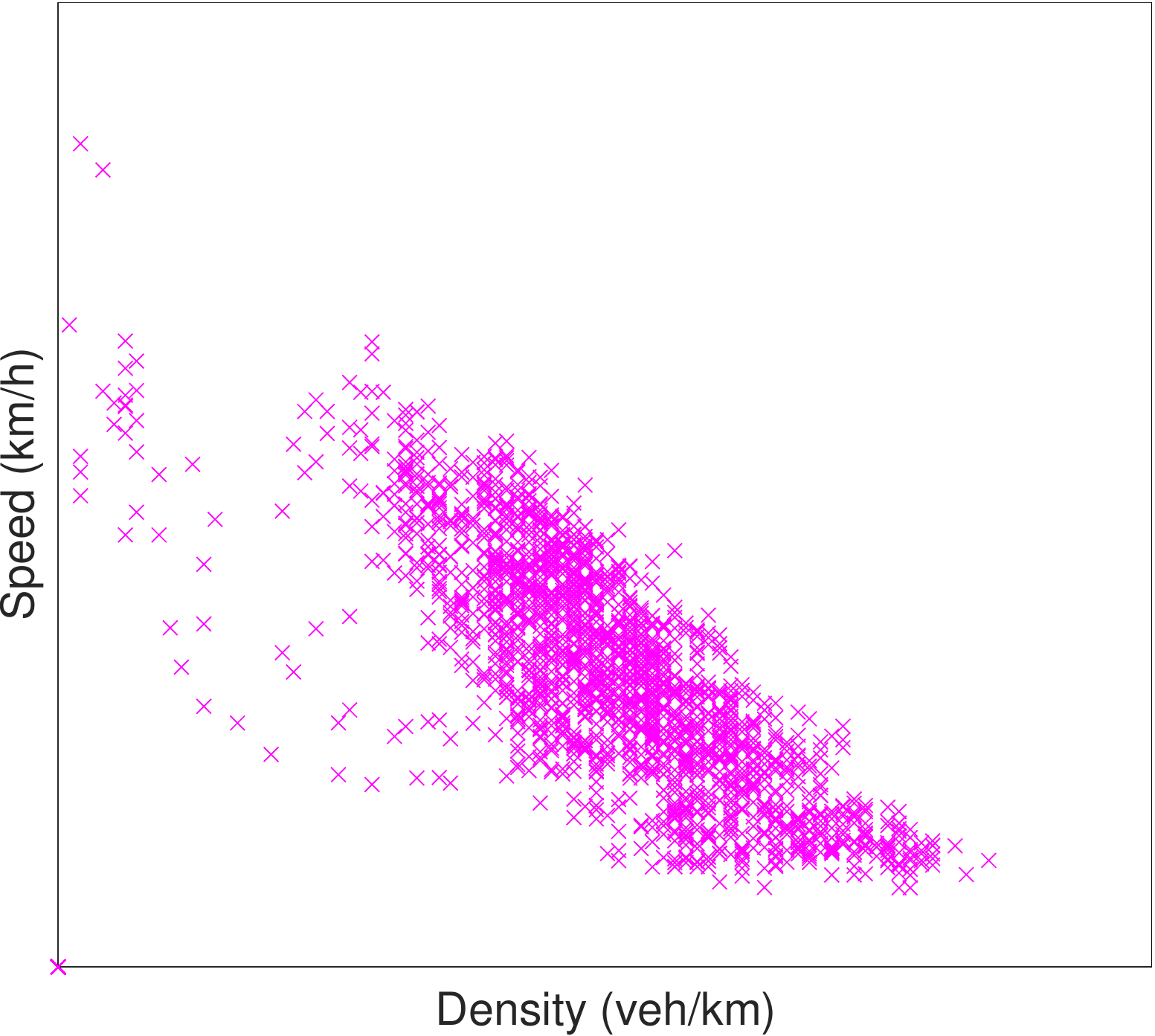}}\\
\subfloat
{\includegraphics[width=.42\linewidth]{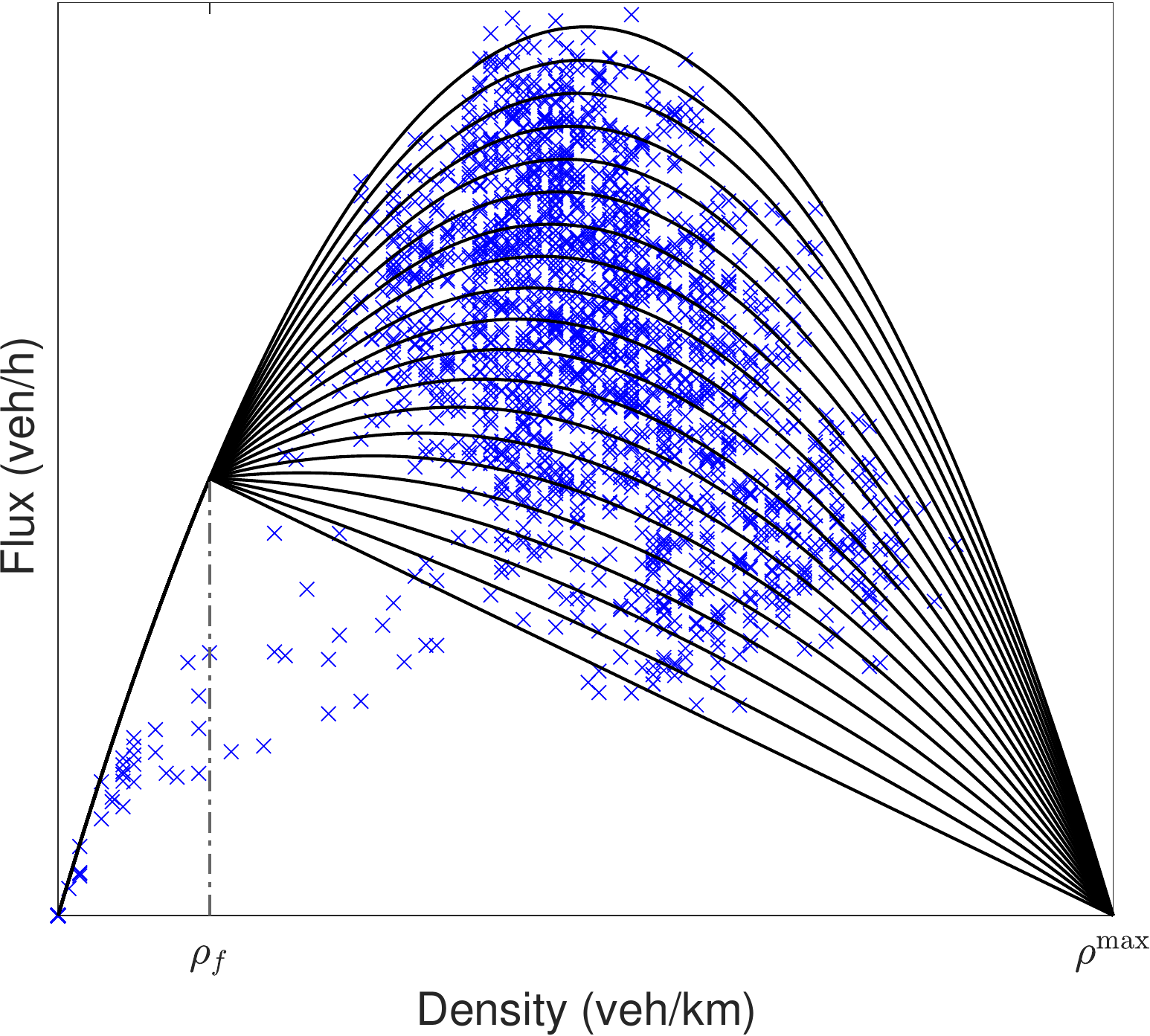}} \quad
\subfloat
{\begin{overpic}[width=0.42\linewidth]{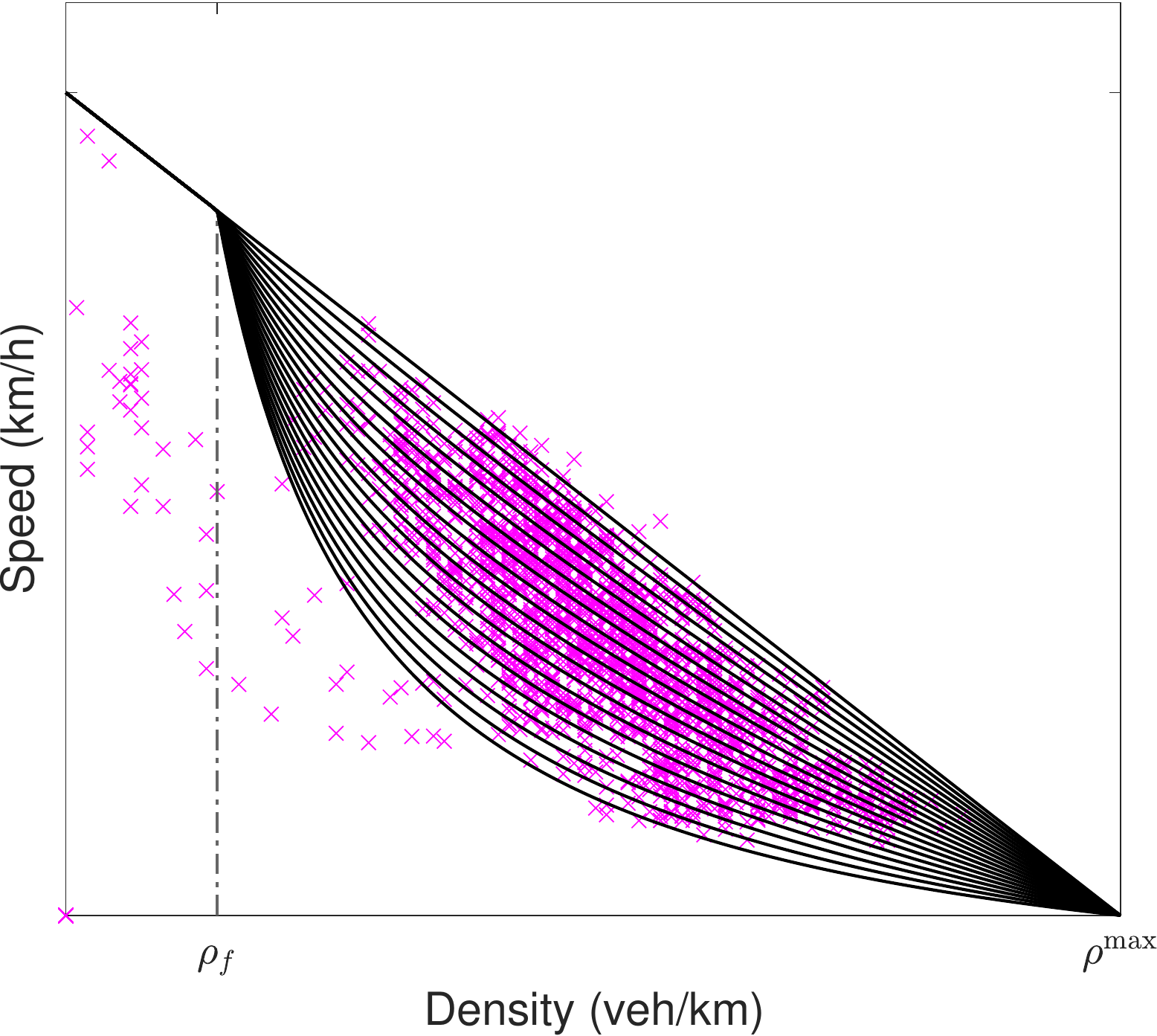}
\put(-5.7,82){\tiny$\vmax$}
\end{overpic}}
\caption{Top: Flow-density relationship (left) and velocity-density relationship (right) from the NGSIM dataset. Bottom: Family of flux functions \eqref{eq:QcgarzFinale} (left) and family of velocity functions \eqref{eq:velocityV} (right) for the calibrated parameters.}
\label{fig:QV}
\end{figure}

We now focus on $\nox$ emissions. The microscopic speed and acceleration included in the NGSIM dataset can be fed directly in \eqref{eq:EmissionRate} providing microscopic $\nox$ emissions produced by each vehicle.  Then, we sum the emissions of vehicles along the entire road
\begin{equation}
E^{\mathrm{true}}(t^{n}) = \sum_{i=1}^{N_{car}(t^{n})}E_{i}(t^{n}),
\label{eq:emissioniTotMicro}
\end{equation}
where $N_{car}(t^{n})$ is the number of vehicles crossing the road at time $t^{n}$ and $E_{i}(t^{n})$ is the emission rate of vehicle $i$ at time $t^{n}$. 

The CGARZ model \eqref{eq:CGARZ2}, calibrated with the NSGIM dataset, is used here to estimate the average density and speed of vehicles along the road. 
The initial density $\rho_0$ and velocity $v_0$ are obtained with a kernel density estimation of the ground-truth data,  specifically the Parzen-Rosenblatt window method.
% \cite{rosenblatt1956AMS,parzen1962AMS}. 
Given a vehicle location $x_{i}(t)$ and velocity $v_{i}(t)$, density and flow rate functions are obtained as superpositions of Gaussian profiles,
\begin{equation}\label{eq:kernel}
\rho(x,t) = \frac{1}{h}\sum_{i=1}^nK(x,x_i),\quad
v(x,t) = \frac{\sum_{i=1}^n v_i K(x,x_i)}{\sum_{i=1}^n K(x,x_i)},
\end{equation}
where
$K(x,x_i) = \phi\left((x-x_i)/h\right) +\phi\left((x-(2a-x_i))/h\right)+\phi\left((x-(2b-x_i))/h\right)$, $\phi(x) = \exp{(-x^2/2)}/\sqrt{2\pi}\ $,
$h$ is a distance parameter, $a$ and $b$ are the extremes of the road. In this work $h=\myunit{25}{\mymeter}$.

The initial $w_0$ is defined such that
$V(\rho_0(x_{i}),w_0(x_{i}))=v_0(x_{i})$, for $j=1,\dots,N_x$ 
and then $y_0(x_{i}) = \rho_0(x_{i}) w_0(x_{i})$. Following the numerical procedure described in Sections \ref{sec:traffNum} we compute the average emission rate $E^{n}_{i}$ of the cell $x_{i}$ at time $t^{n}$, for all $i$ and $n$, by means of \eqref{eq:emissioni}. 
Similarly to the microscopic case \eqref{eq:emissioniTotMicro}, we sum the emission rates all over the cells
\begin{equation}
E^{\mathrm{mod}}(t^{n}) =  \sum_{i=1}^{N_{x}}E^{n}_{i},
\label{eq:emissioniTotMacro}
\end{equation}
where $t^{n} = n\delta t$, with $\delta t=\myunit{0.1}{\mysecond}$ is the time frame of the NGSIM dataset. 

Two formulas to compute the acceleration were proposed in \eqref{eq:accAnalitica} and \eqref{eq:acc}. The former is analytical and adapted for macroscopic models, while the latter is discrete and can be used to any type of data.
In Figure \ref{fig:acc} we compare the numerical results using the two formulations.  
The red-solid line of the left plot represents the $\nox$ emission rate computed using the discrete acceleration on average density and speed values obtained via kernel density estimation \eqref{eq:kernel} from NGSIM trajectory data. The blue-circles line, instead, represents the ground-truth emission rate  \eqref{eq:emissioniTotMicro}. The results are quite similar, suggesting the accuracy of the discrete acceleration \eqref{eq:acc}. Finally, on the right plot of Figure \ref{fig:acc} we compare the emission rate of $\nox$ computed with equation \eqref{eq:emissioni}, using the two different definitions of the acceleration function \eqref{eq:accAnalitica} and \eqref{eq:acc}. The results are almost identical and have same computational cost, and this further certifies the efficiency of the CGARZ model \eqref{eq:CGARZ2} and suggests the use of the analytical formula \eqref{eq:accAnalitica} to estimate emissions. 
\begin{figure}[h!]
\centering
\includegraphics[width=0.405\linewidth]{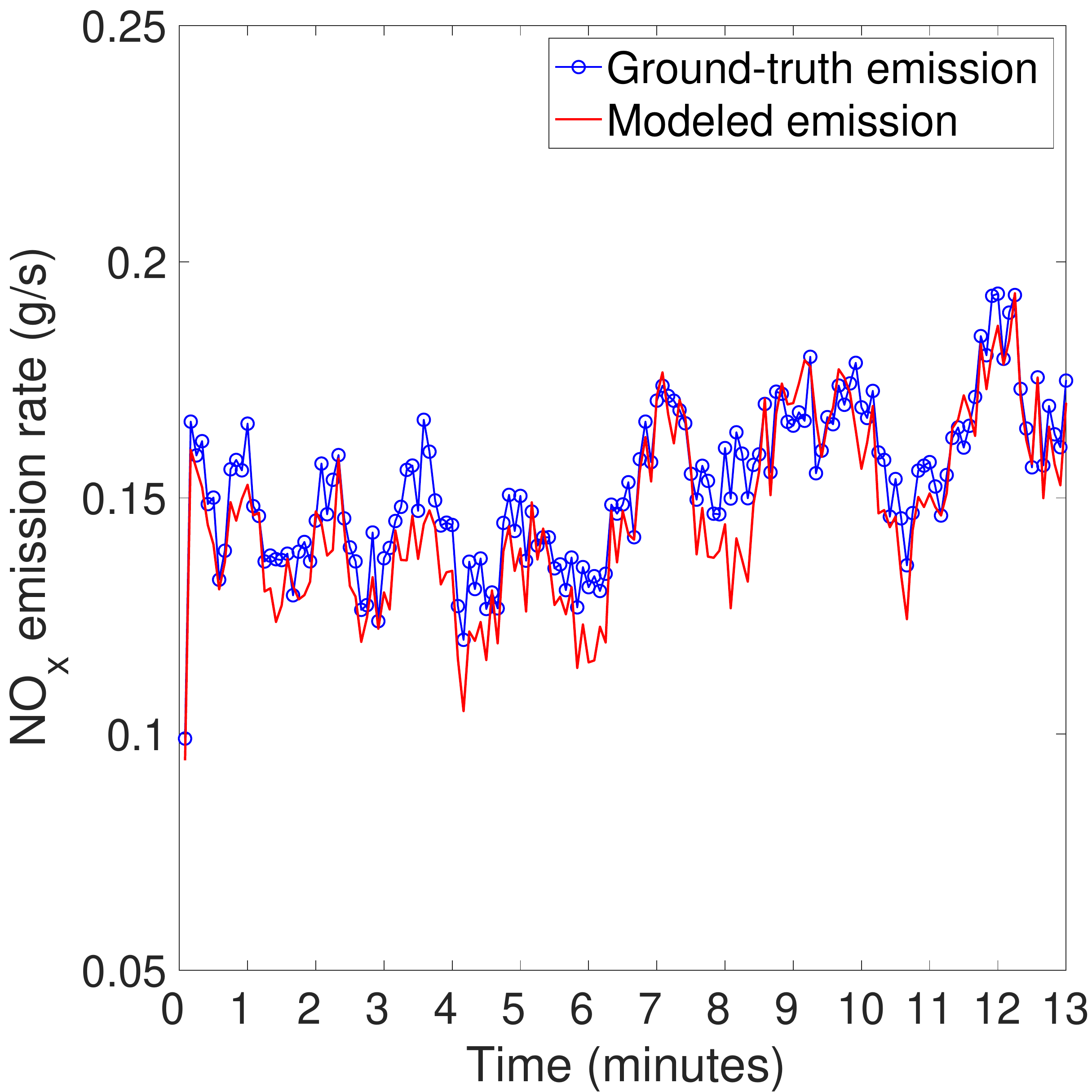}\qquad
\includegraphics[width=0.4\linewidth]{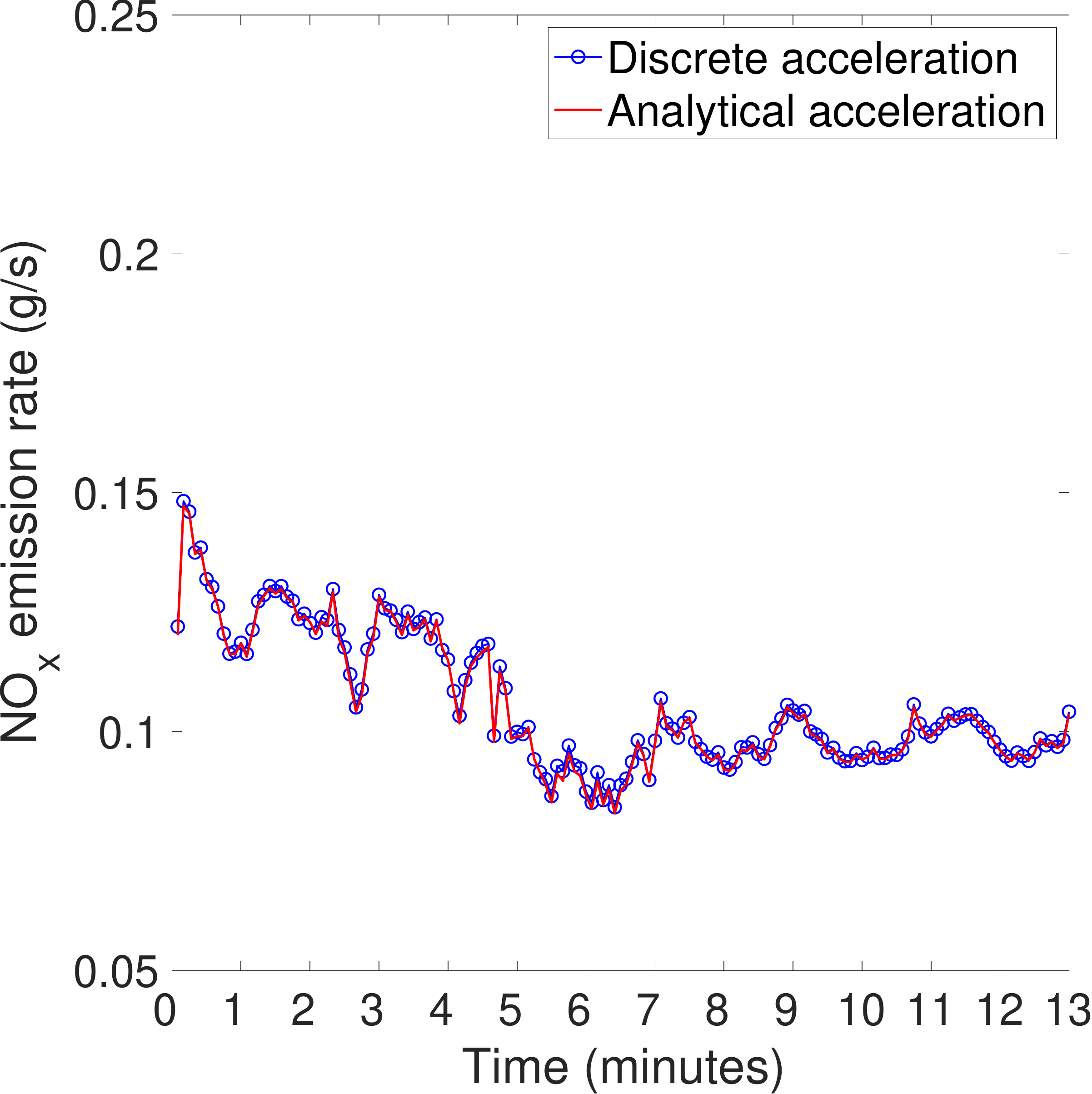}
\caption{Comparison between ground-truth emission rate and modeled emission rate computed using discrete acceleration \eqref{eq:acc} on density and speed via kernel density estimation (left). Comparison of emission rate  computed with the discrete \eqref{eq:acc} and analytical \eqref{eq:accAnalitica} acceleration (right). Both the results refer to 500 meters of road and 13 minutes of simulation (data from 4:01 pm - 4:14 pm of NGSIM dataset).}
\label{fig:acc}
\end{figure}

We compare now the emission rate along the entire road obtained with \eqref{eq:emissioniTotMicro} and \eqref{eq:emissioniTotMacro} respectively, for each period of the NGSIM dataset. The results are computed with 13-minute simulations, in which we exclude the first and the last minute of recorded trajectories for corruption of data.
In Figure \ref{fig:emissioni1} we observe that the emission rate obtained by the CGARZ model \eqref{eq:emissioniTotMacro} (black-dotted) is lower than the ground-truth emission \eqref{eq:emissioniTotMicro} (blue-solid). Improved results are obtained by multiplying the modeled emissions by a proper correction factor (red-circles). Specifically, for each data period $j$, we have computed a correction factor $r_{j}$ via linear regression between the ground-truth emission and the modeled one. Moreover, we define the following error 
\begin{equation}
\mathrm{Error}(r_{j}) = \frac{\norm{E^{\mathrm{true}}-r_{j}E^{\mathrm{mod}}}_{L^{1}}}{\norm{E^{\mathrm{true}}}_{L^{1}}},  \qquad j = 1,2,3,
\label{eq:erroreEm}
\end{equation}
where $E^{\mathrm{true}}$ and $E^{\mathrm{mod}}$ are vectors whose $k$-th components are given by \eqref{eq:emissioniTotMicro} and \eqref{eq:emissioniTotMacro} respectively. Table \ref{tab:erroriEm} shows the errors \eqref{eq:erroreEm} obtained using the three different correction factors for all the time periods of the NSGIM dataset, where $r_{1}=1.42$, $r_{2}=1.35$ and $r_{3}=1.15$. We observe that the correction factors $r_{1}$, $r_2$ and $r_{3}$ give similar results.
\begin{table}[hpbt!]
\centering
\small
\begin{tabular}{|c|c|c|c|}\hline
Period & $\mathrm{Error}(r_{1})$ & $\mathrm{Error}(r_{2})$ & $\mathrm{Error}(r_{3})$\\\hline
4:01 pm - 4:14 pm & 0.1604 & 0.1666 & 0.2204\\\hline
5:01 pm - 5:14 pm & 0.0819 & 0.0842 & 0.1625\\\hline
5:16 pm - 5:29 pm & 0.2304 & 0.1773 & 0.0586\\\hline
\end{tabular}
\caption{Errors given by \eqref{eq:erroreEm} for the three slots of the NGSIM dataset and different correction factor $r_{1}=1.42$, $r_{2}=1.35$ and $r_{3}=1.15$.}
\label{tab:erroriEm}
\end{table}
 \begin{figure}[hpbt!]
\centering
\subfloat[][{Data from 4:01 pm - 4:14 pm and correction factor $r_{1}$.}]
{\includegraphics[scale=0.168]{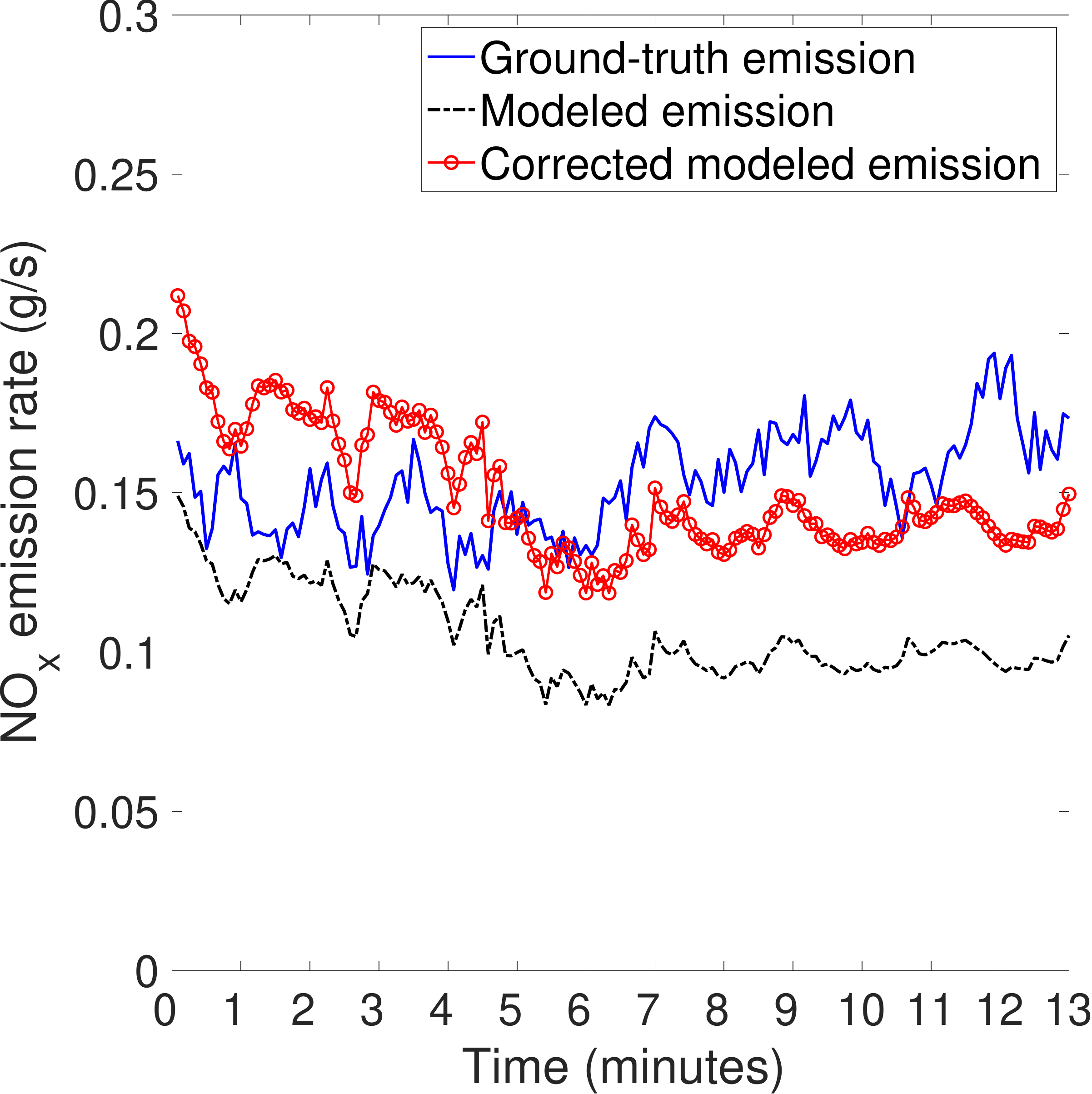}}\quad
\subfloat[][{Data from 5:01 pm - 5:14 pm and correction factor $r_{1}$.}]
{\includegraphics[scale=0.168]{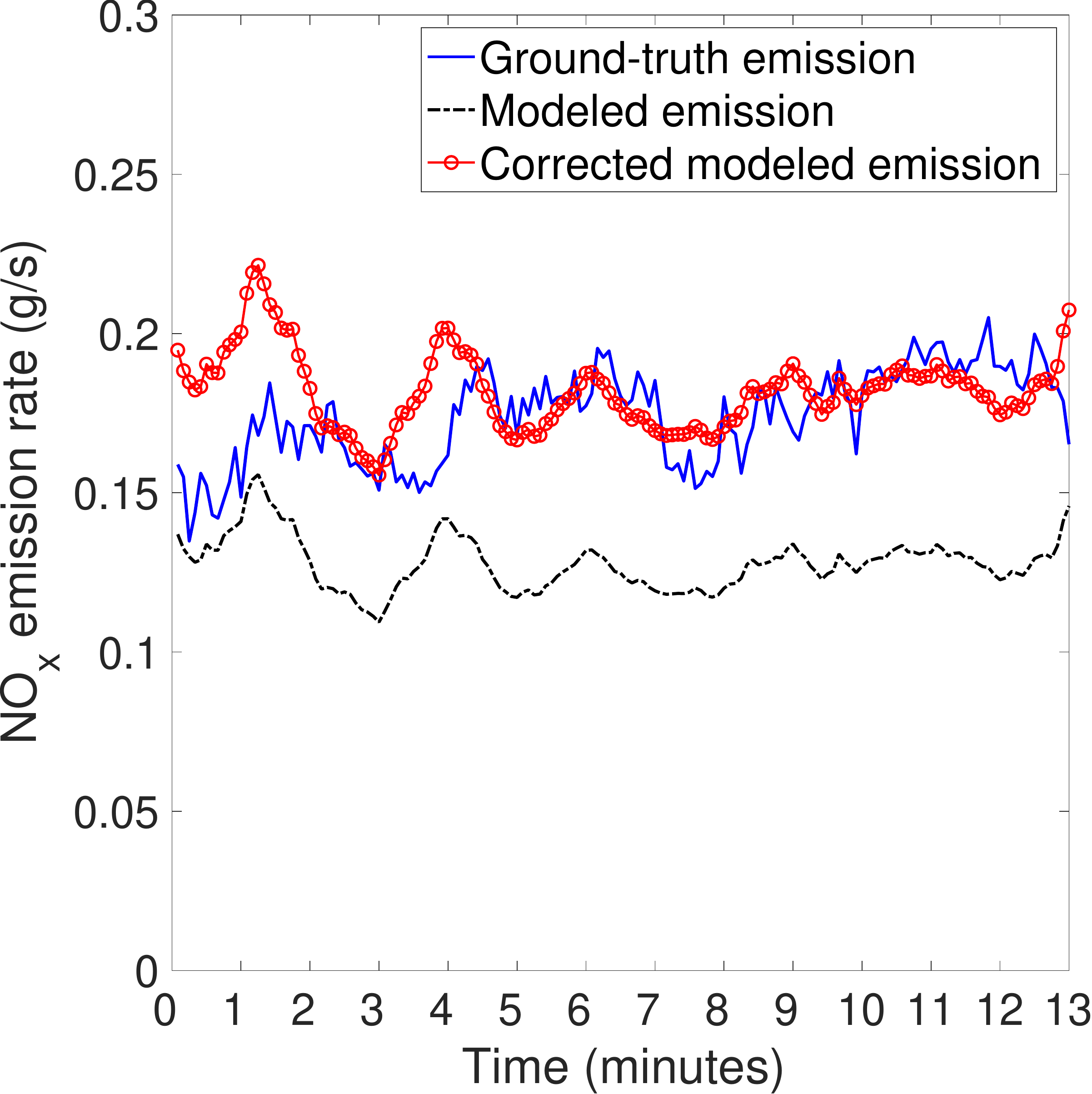}}\quad
\subfloat[][{Data from 5:16 pm - 5:29 pm and correction factor $r_{1}$.}]
{\includegraphics[scale=0.168]{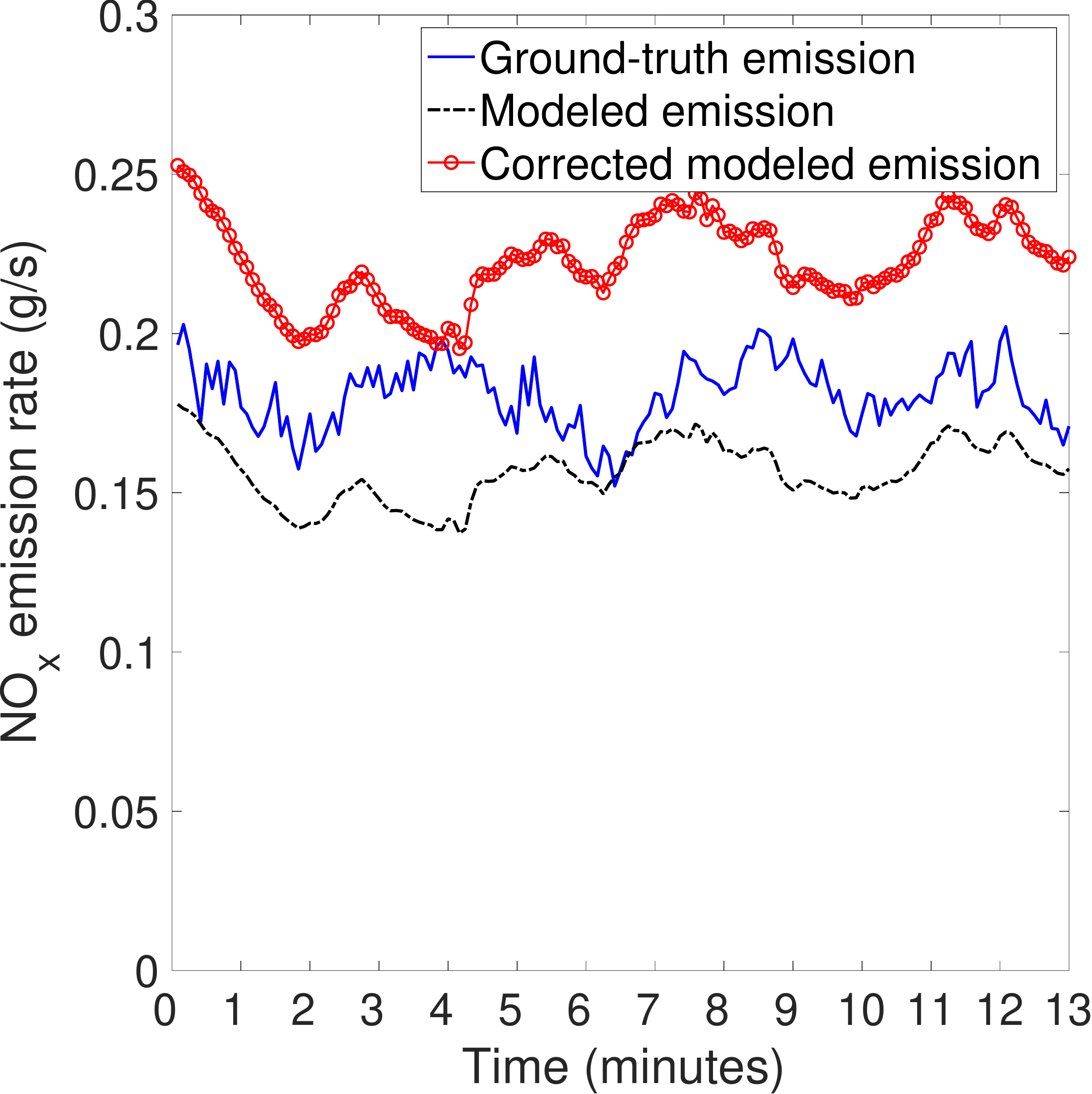}}
\\
\subfloat[][{Data from 4:01 pm - 4:14 pm and correction factor $r_{2}$.}]
{\includegraphics[scale=0.168]{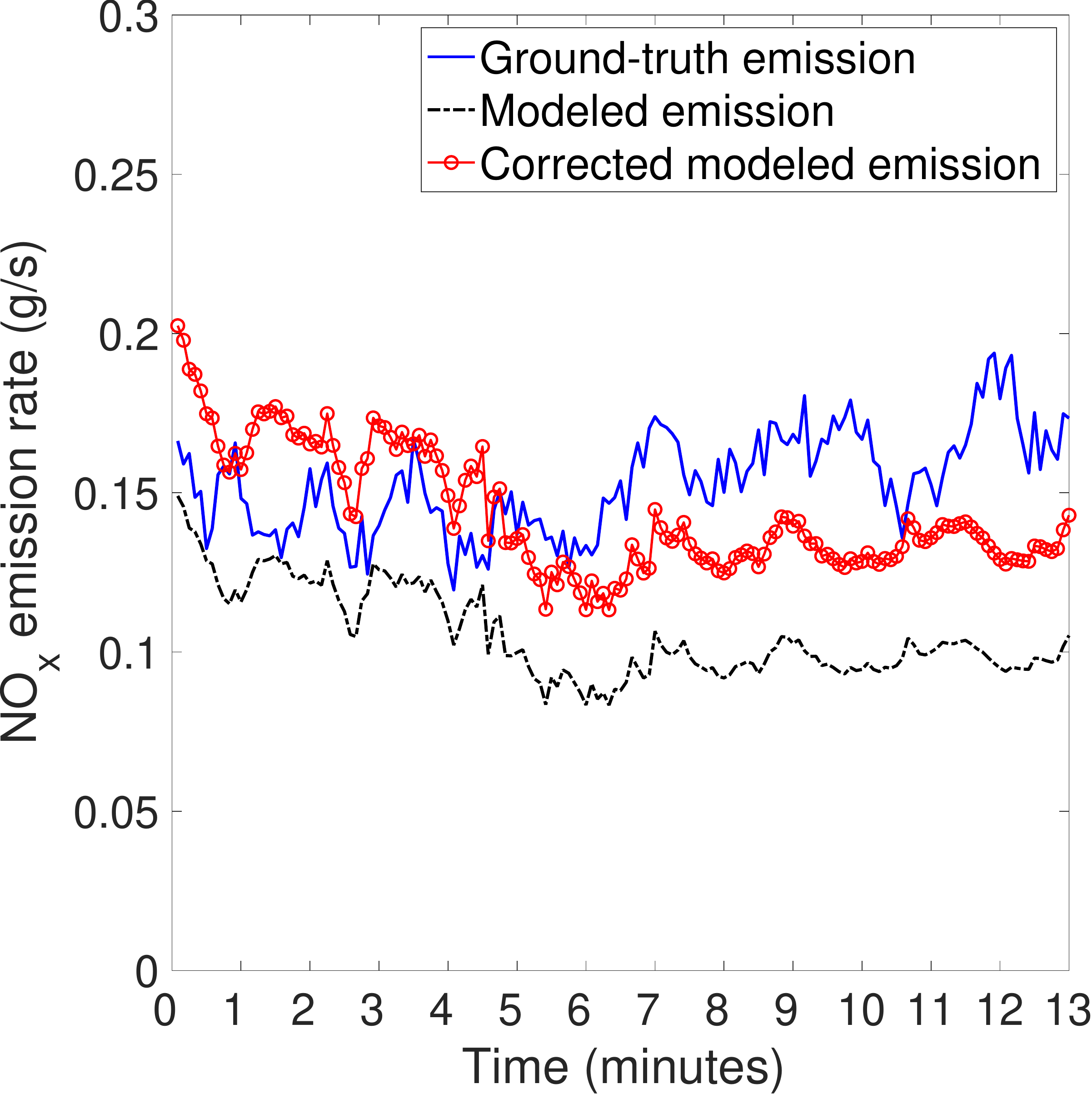}}\quad
\subfloat[][{Data from 5:01 pm - 5:14 pm and correction factor $r_{2}$.}]
{\includegraphics[scale=0.168]{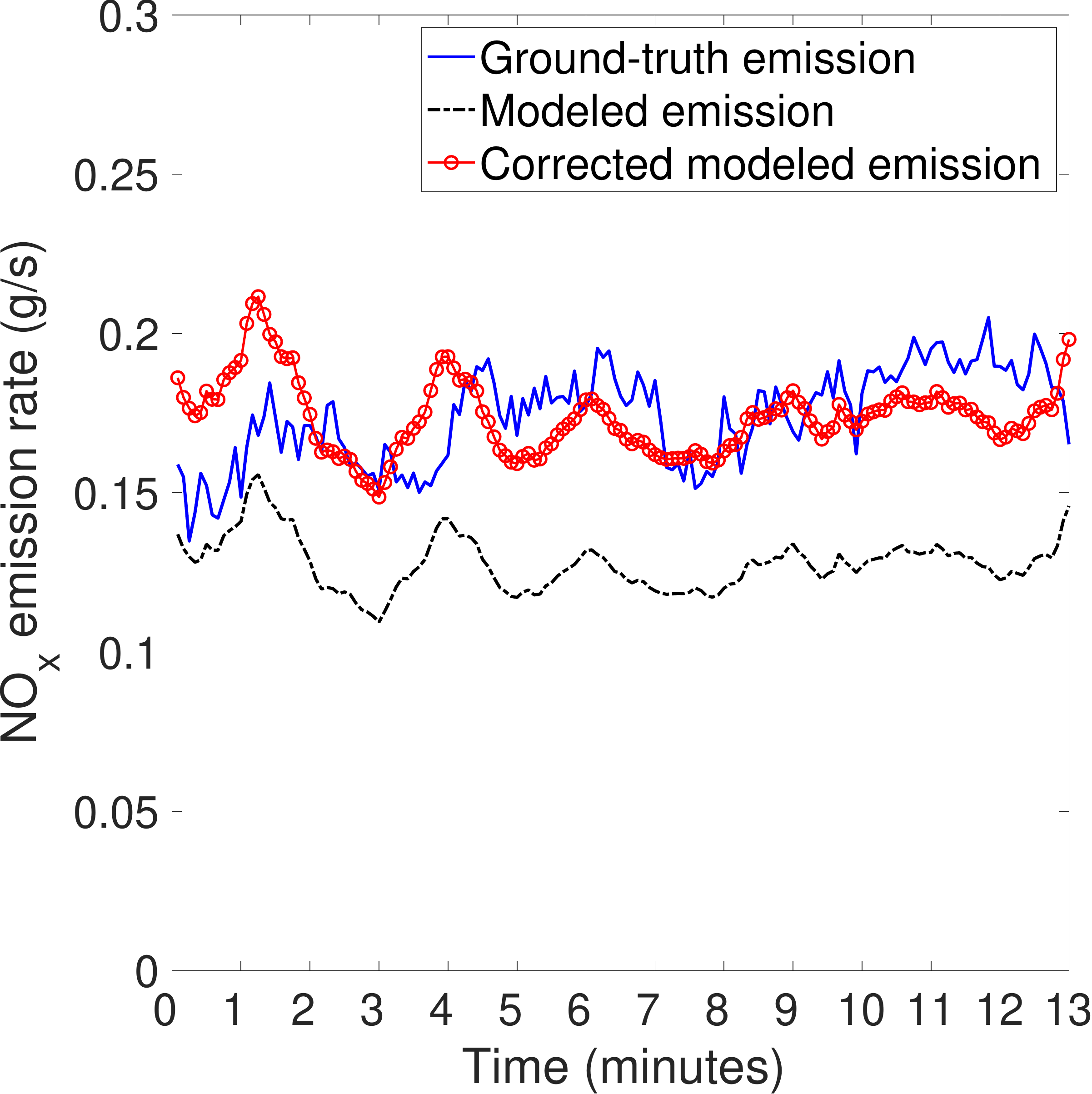}}\quad
\subfloat[][{Data from 5:16 pm - 5:29 pm and correction factor $r_{2}$.}]
{\includegraphics[scale=0.168]{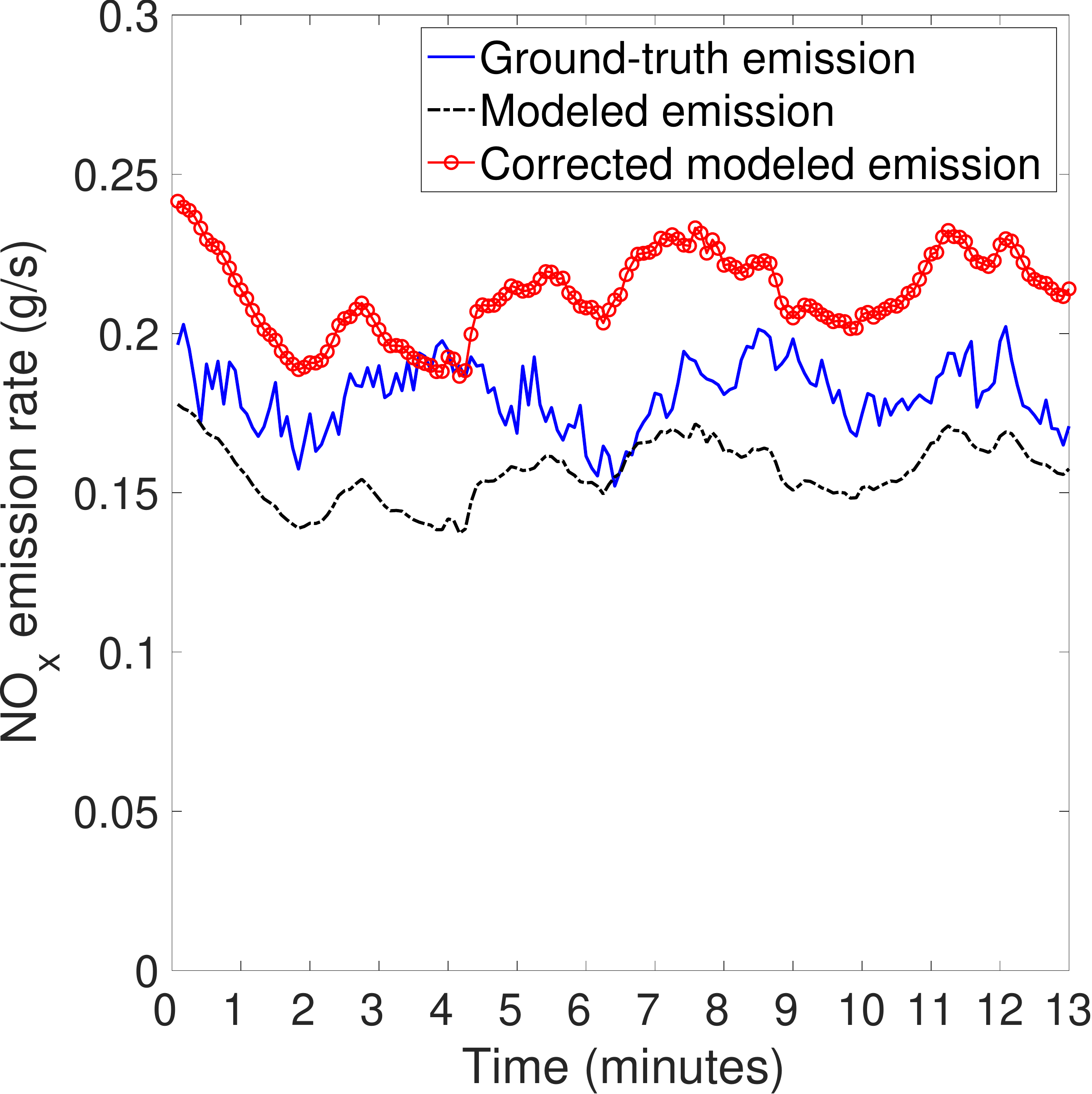}}
\\
\subfloat[][{Data from 4:01 pm - 4:14 pm and correction factor $r_{3}$.}]
{\includegraphics[scale=0.168]{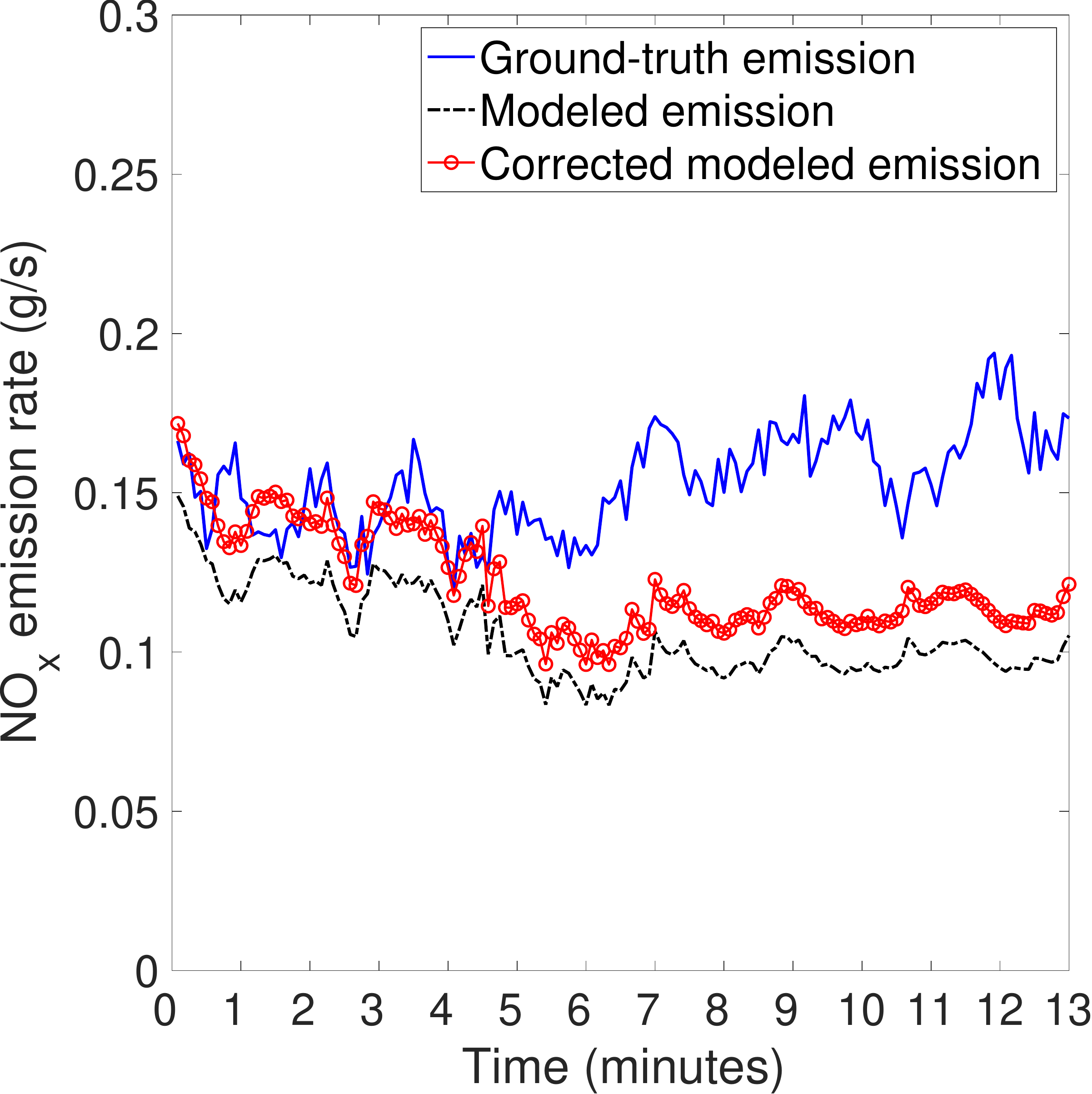}}\quad
\subfloat[][{Data from 5:01 pm - 5:14 pm and correction factor $r_{3}$.}]
{\includegraphics[scale=0.168]{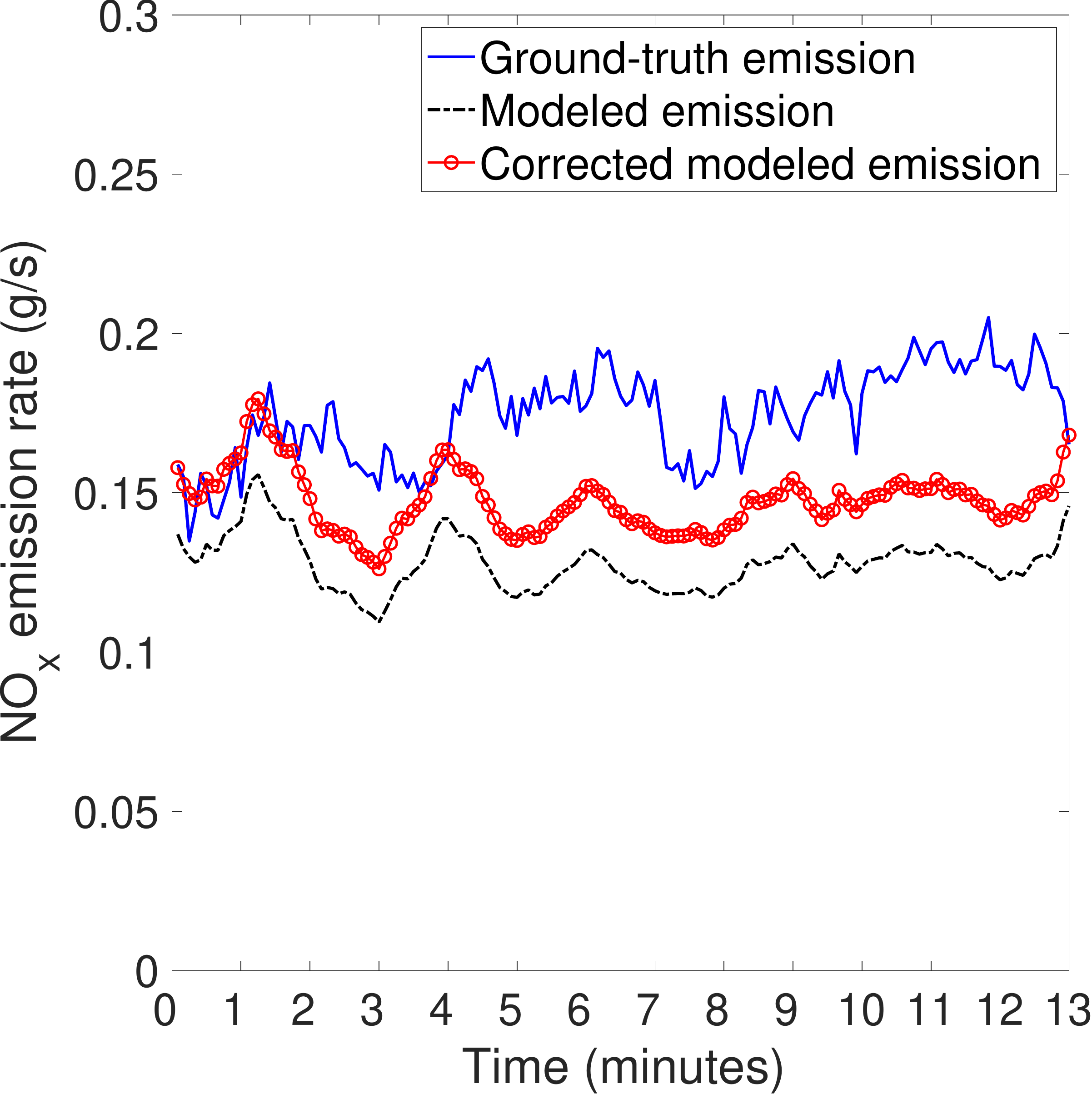}}\quad
\subfloat[][{Data from 5:16 pm - 5:29 pm and correction factor $r_{3}$.}]
{\includegraphics[scale=0.168]{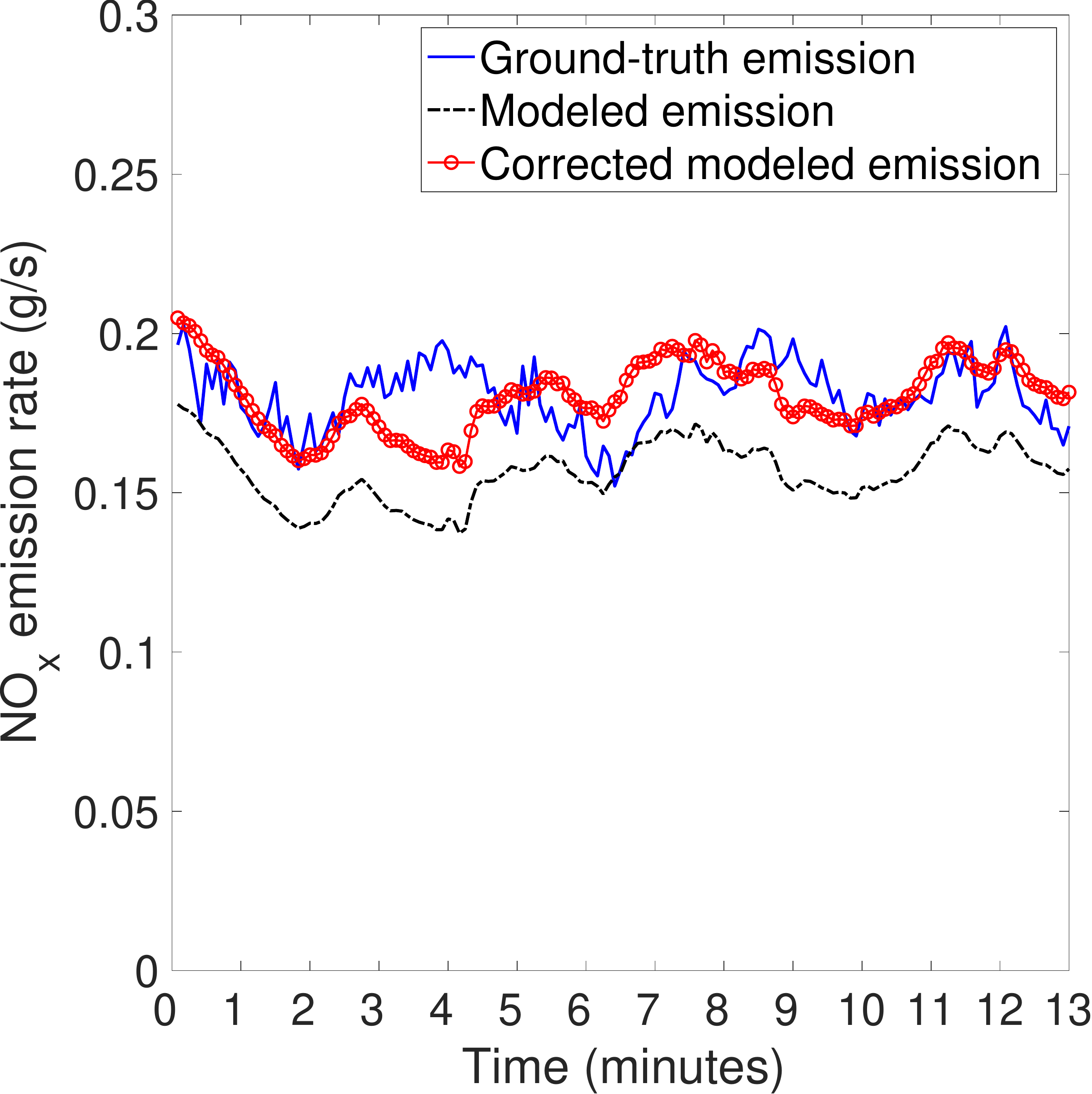}}
\caption{Comparison of modeled (black-dotted), modeled with correction factors $r_{j}$ (red-circles) and ground-truth (blue-solid) emission rates along 500 meters of road during 13 minutes of simulation for the three time periods of the NSGIM dataset. The top row is computed for $r_{1}=1.42$, the central row for $r_{2}=1.35$ and the bottom row for $r_{3} =1.15$.}
\label{fig:emissioni1}
\end{figure}

\subsubsection{Traffic dynamics}
Let us consider the CGARZ traffic model \eqref{eq:CGARZ2} on a time horizon
$[0,T]$ and on a road with one lane parametrized by $[0,L]$. 
We fix the maximum speed $\vmax=\myunit{70}{\mykilo\mymeter\per\myhour}$, maximum density $\rhom=\myunit{133}{\mathrm{veh}\per\mykilo\mymeter}$ and
left boundary condition $\rho(0,t)=\myunit{52}{\mathrm{veh}\per\mykilo\mymeter}$ $\forall t$, while we use Neumann boundary
condition on the right, which corresponds to allowing all vehicles to leave the road.
The other parameters used in all simulations are $T=\myunit{30}{\mymin}$, $L=\myunit{3}{\km}$, $\dx=\myunit{0.03}{\km}$, $\dt=\myunit{1.5}{\mysecond}$ and 
the initial density $\rho_{0}=\myunit{52}{\veh\per\km}$ and the initial property $w_{0}(x)=\wr$ for $x \leq 2L/3$ and $w_{0}(x) = \wl$ otherwise, where $\wl = f(\rhof ) = 1140$ and $\wr = g(\rhom/2) = 2327$, with $f$ in \eqref{eq:f} and $g$ in \eqref{eq:g}.

In the following we show different traffic scenarios to evaluate the production of ozone.

\paragraph{\labeltext{Traffic dynamic 1}{test1}: road without traffic lights}
The dynamic is described by an initial shock wave around the middle of the road and a rarefaction wave stemming from the right end of the road.
The shock wave propagates backward for the first half of the simulation, when the interaction with the rarefaction wave, and the consequent cancellation, changes the shock speed to positive.
In Figure \ref{fig:test1_traffico} we compare the 3D plots of density, speed, acceleration and $\nox$ emission rates. The four graphs have the same shape, since they depend on the density of vehicles. 
The acceleration reaches the minimum value along the blue curve shown in the graph, while the maximum value is reached at the beginning of the simulation at the end of the road, when the vehicles leave the road with maximum flux. 
Finally, the $\nox$ emission rate has a peak in correspondence of the highest values of acceleration and it is equal to 0 along the curve with the darkest blue.  
\begin{figure}[h!]
\centering
\subfloat[][Density ($\veh/\km$).]{
\includegraphics[width=0.35\linewidth]{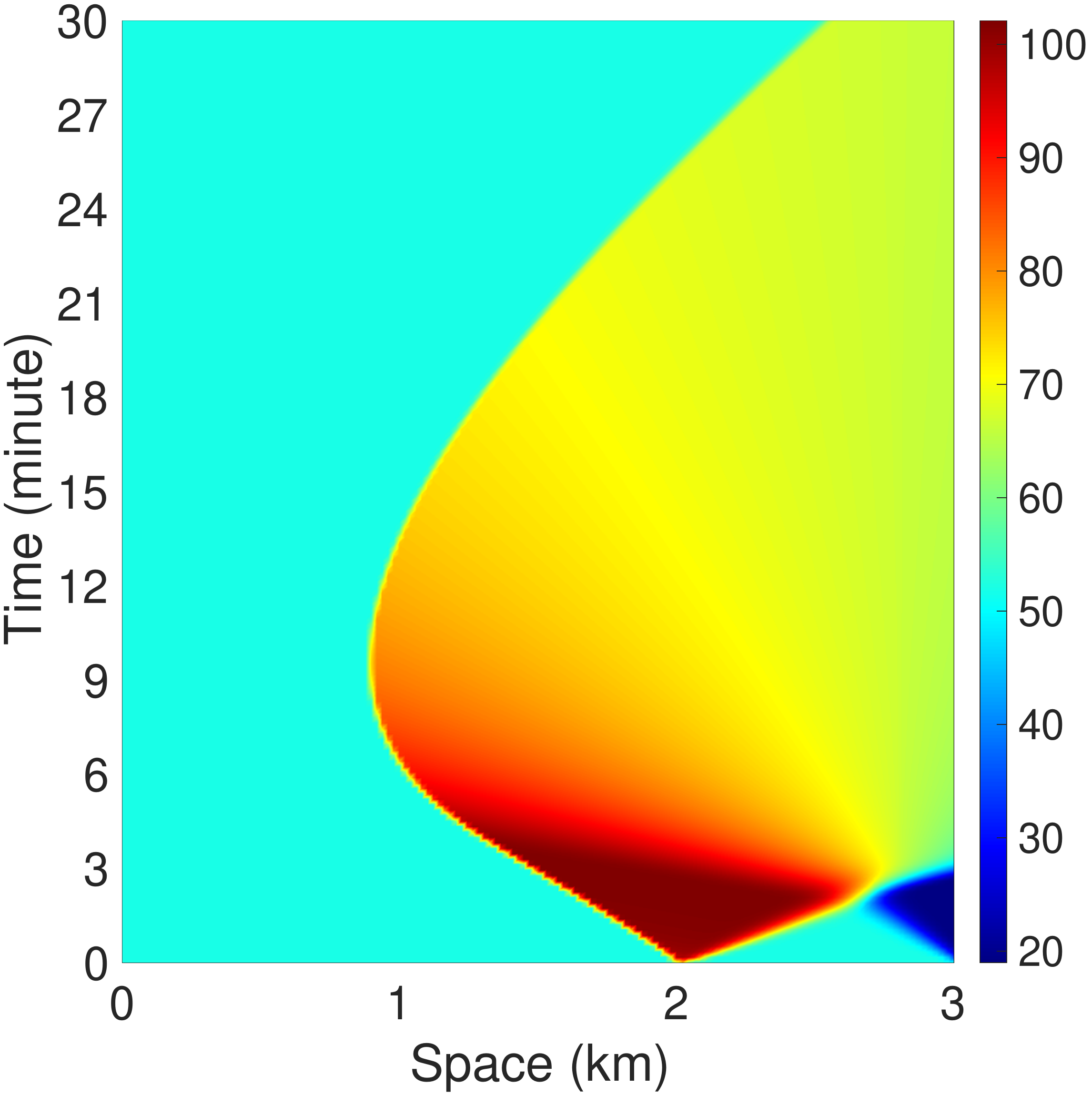}
}\quad
\subfloat[][Speed ($\km\per\myhour$).]{
\includegraphics[width=0.345\linewidth]{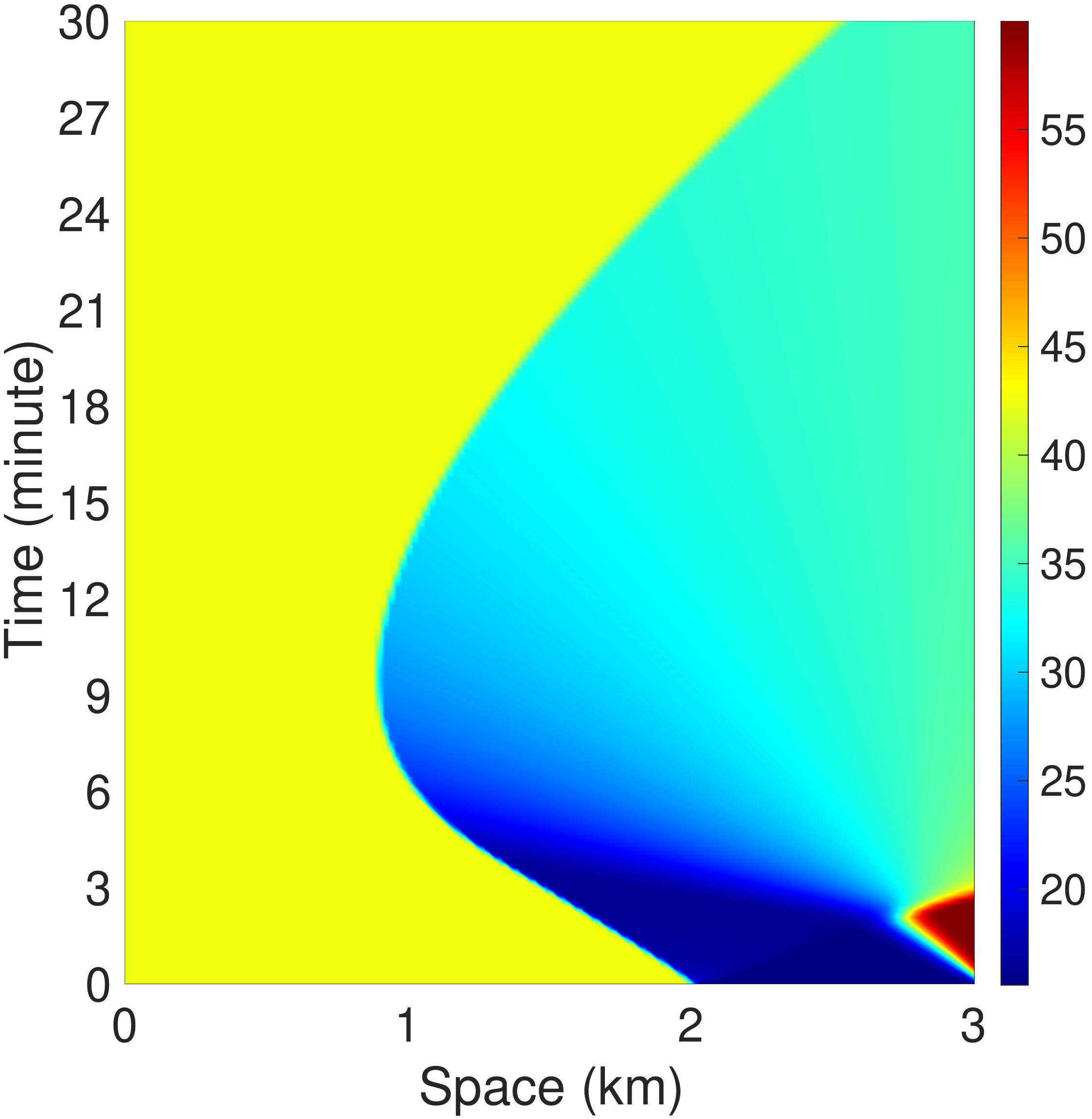}}
\\
\subfloat[][Acceleration ($\mymeter\per\mysecond^{2}$).]{
\includegraphics[width=0.35\linewidth]{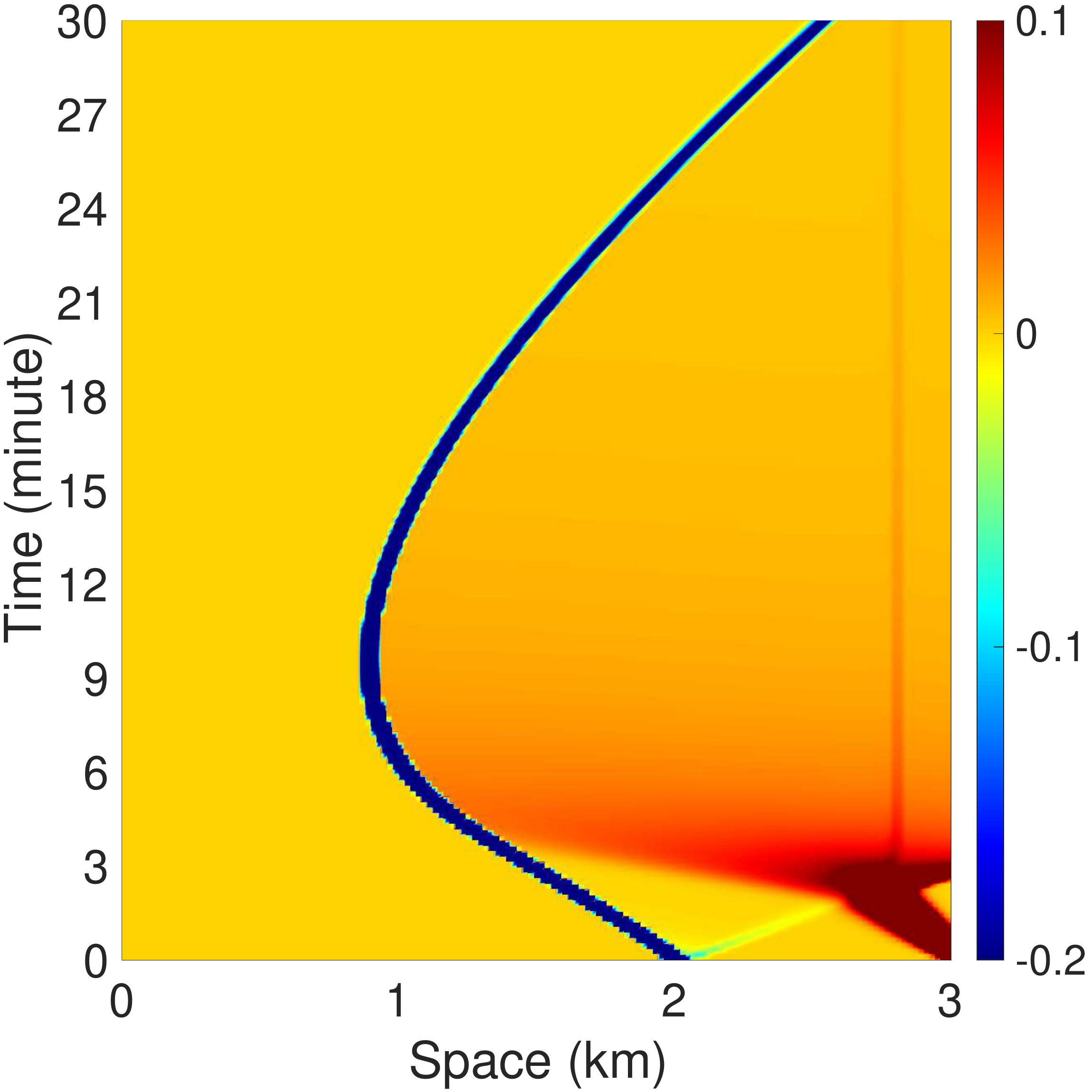}}
\quad
\subfloat[][Emission rates ($\mygram\per\myhour$).]{
\includegraphics[width=0.345\linewidth]{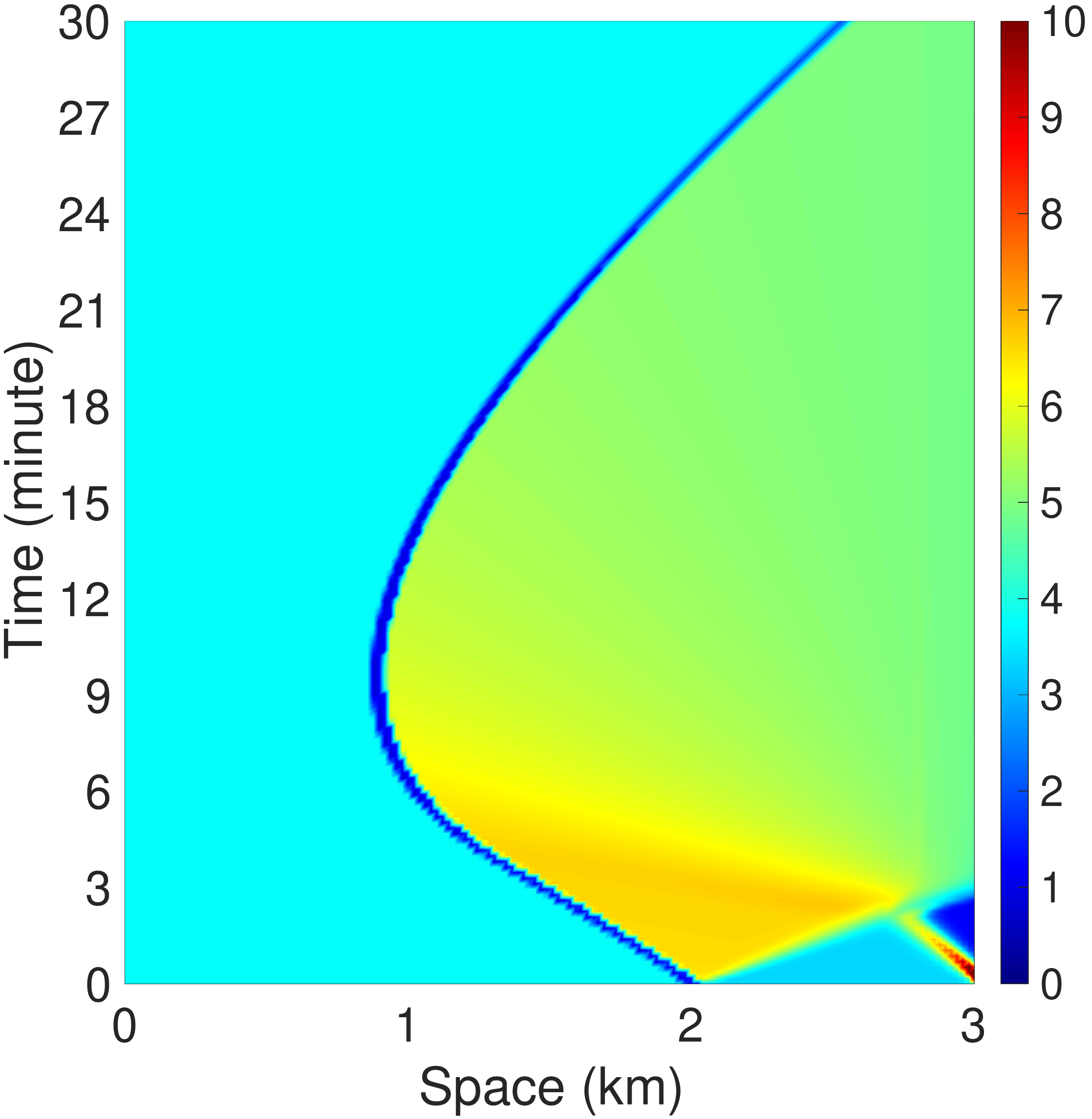}}
\caption{\ref{test1}: Variation of density (a), speed (b), analytical acceleration (c) and $\nox$ emissions (d) in space and time.}
\label{fig:test1_traffico}
\end{figure}

On the left plot of Figure \ref{fig:test1_emissioni} we show data points
of speed, acceleration and emission obtained along the numerical test. More precisely, the horizontal and vertical axes denote speed and acceleration, respectively, while the color gives the $\nox$ emission value. We observe that the $\nox$ emission is higher for positive values of the acceleration and at low speed with values of acceleration near to $-0.5\,\mymeter\per\mysecond^{2}$, and it decreases with negative acceleration. On the right plot of Figure \ref{fig:test1_emissioni} we show the variation in time of the total emission, defined as the sum on the cells of the emission rates, at any time. For this test, the total emission increases until the dynamics is described by the shock wave, and then it starts to decrease.
\begin{figure}[hpbt!]
\centering
\includegraphics[width=0.34\linewidth]{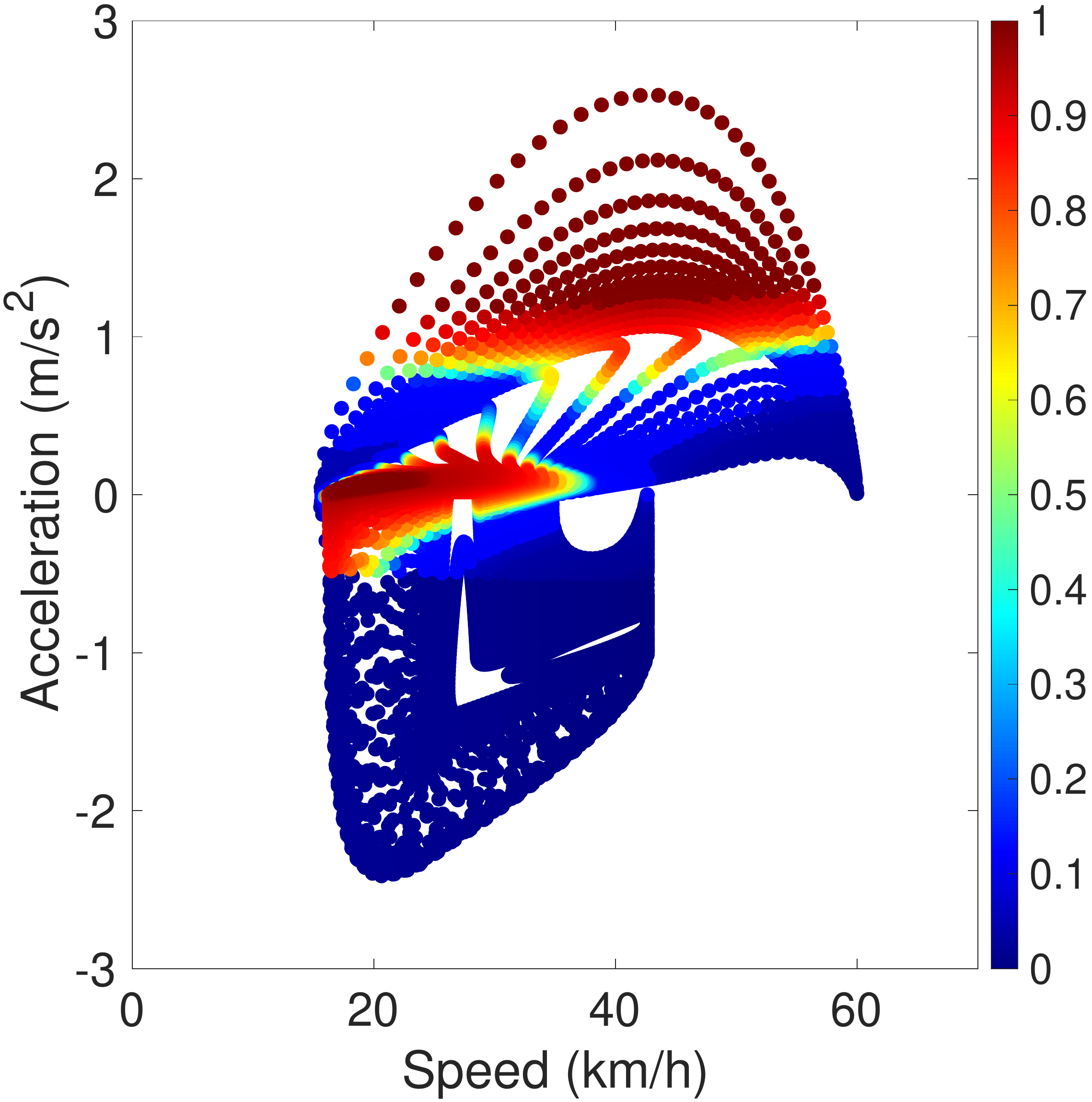}\quad
\includegraphics[width=0.35\linewidth]{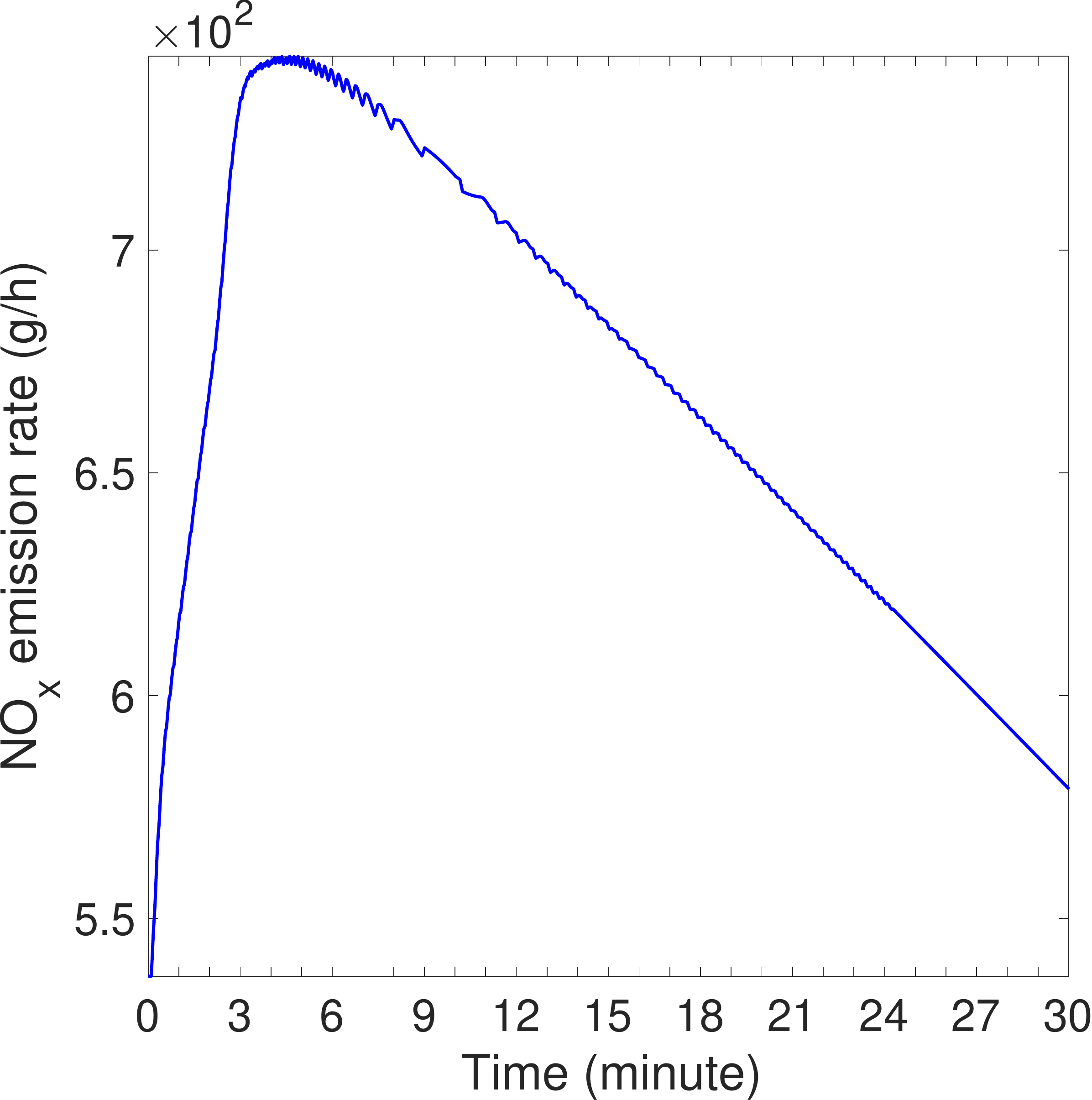}\\
\caption{\ref{test1}: $\nox$ emission rate ($\mygram\per\myhour$) as a function of speed and acceleration (left); variation in time of the total emission rate ($\mygram\per\myhour$) along the entire road (right).}
\label{fig:test1_emissioni}
\end{figure}

\paragraph{\labeltext{Traffic dynamic 2}{test2}: road with traffic lights}
Here we test the effect of different traffic light cycles varying the time frame of the red phase. The latter corresponds to a Neumann boundary
condition imposing vanishing outflow, while the green phase correspond
to Neumann boundary condition allowing all cars to leave the road.
We start by showing the solution obtained with a traffic light cycle of $5$ minutes with a $2$ minutes red phase. 
In Figure \ref{fig:test2_traffico} we show density, speed, acceleration and $\nox$ emission rate in space and time. The wave with high density created by the red traffic lights takes about 9 minutes to reach the left boundary of the road. Once it reaches the left boundary of the road we see a periodic behavior in all the graphs, determined by the traffic lights. The graphs related to density and speed have opposite behavior: when the density increases the speed decreases and vice versa. Similar to test \ref{test1}, the acceleration reaches the maximum values when the traffic light turns green and the vehicles leave the road. Again, the peaks of $\nox$ emission rates correspond to the highest acceleration values.

In Figure \ref{fig:test2_emissioni} we show on the left the emission rate as a function of speed and acceleration, and on the right the total emission along the road in time. Similar to Figure \ref{fig:test1_emissioni}, the left graph shows higher emission levels at positive acceleration and at low speed and values of acceleration near to $\myunit{-0.5}{\mymeter\per\mysecond^{2}}$. In the graph we can see two phases, horizontally divided at height $-0.5$. We observe that $\myunit{-0.5}{\mymeter\per\mysecond^{2}}$ is the acceleration value which distinguishes the two possible choices of the parameters in \eqref{eq:EmissionRate}, see Table \ref{table:EmissionRateParam}. 
The right graph of Figure \ref{fig:test2_emissioni}  shows the total emission in time, where the red and green lines represent the relative traffic light. We observe that, during the first 10 minutes, the emission rate increases faster when the traffic light is green and slower when it is red. Then, it reaches a maximum value after which it assumes a periodic behavior which depends on the traffic light.

\begin{figure}[hpbt!]
\centering
\subfloat[][Density ($\veh/\km$).]{
\includegraphics[width=0.35\linewidth]{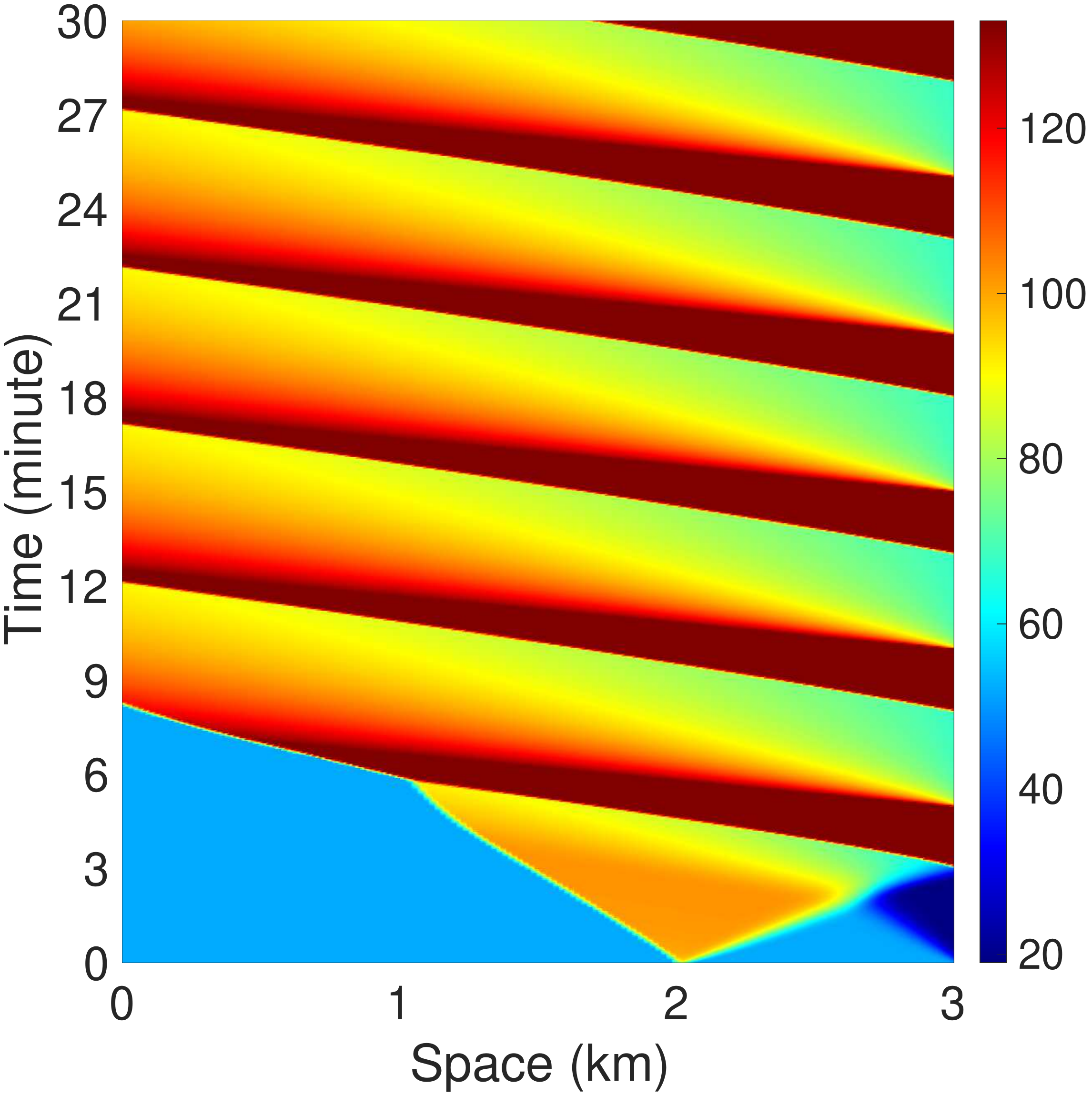}}
\quad
\subfloat[][Speed ($\km\per\myhour$).]{
\includegraphics[width=0.345\linewidth]{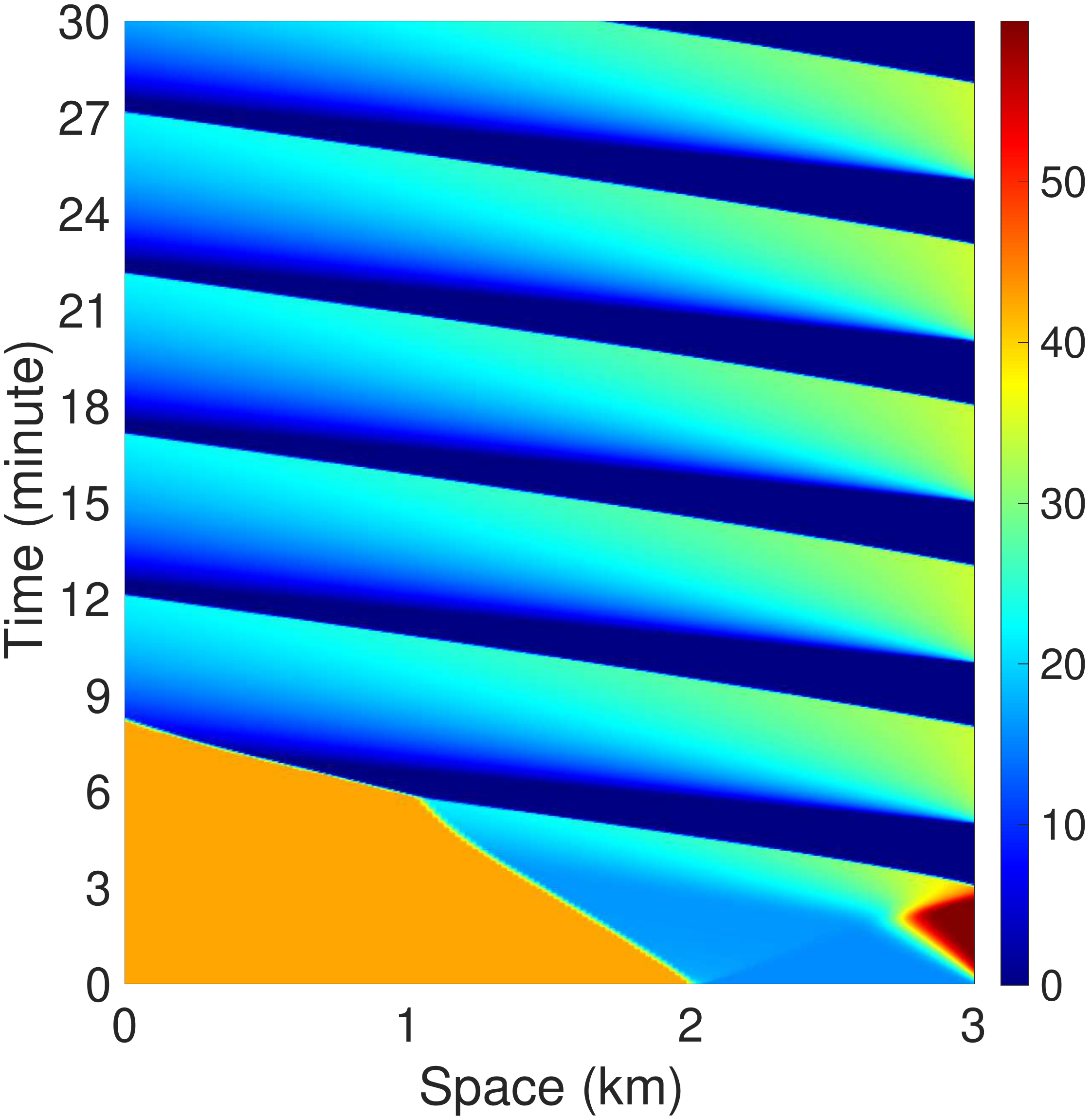}}
\\
\subfloat[][Acceleration ($\mymeter\per\mysecond^{2}$).]{
\includegraphics[width=0.35\linewidth]{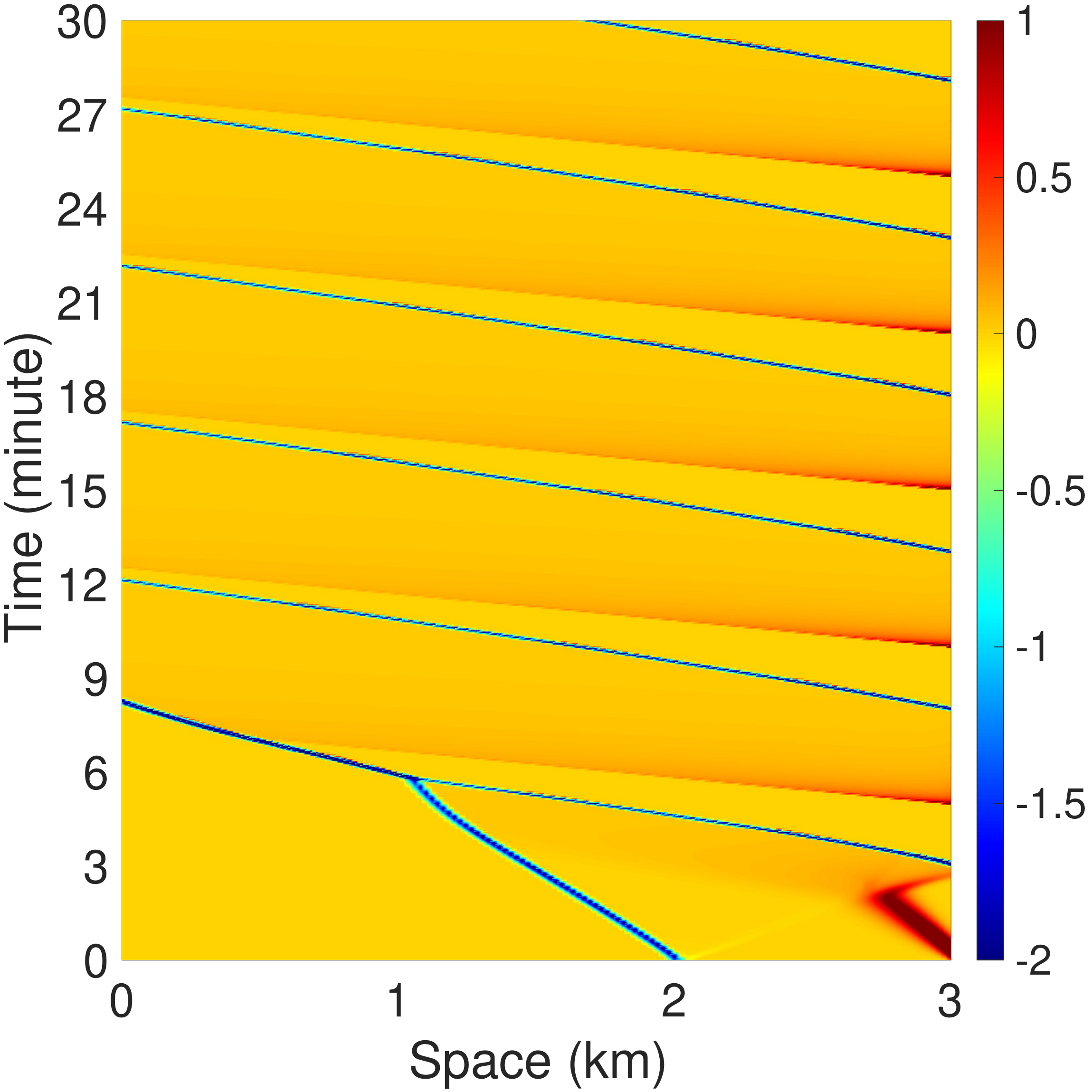}}
\quad
\subfloat[][Emission rates ($\mygram\per\myhour$).]{
\includegraphics[width=0.345\linewidth]{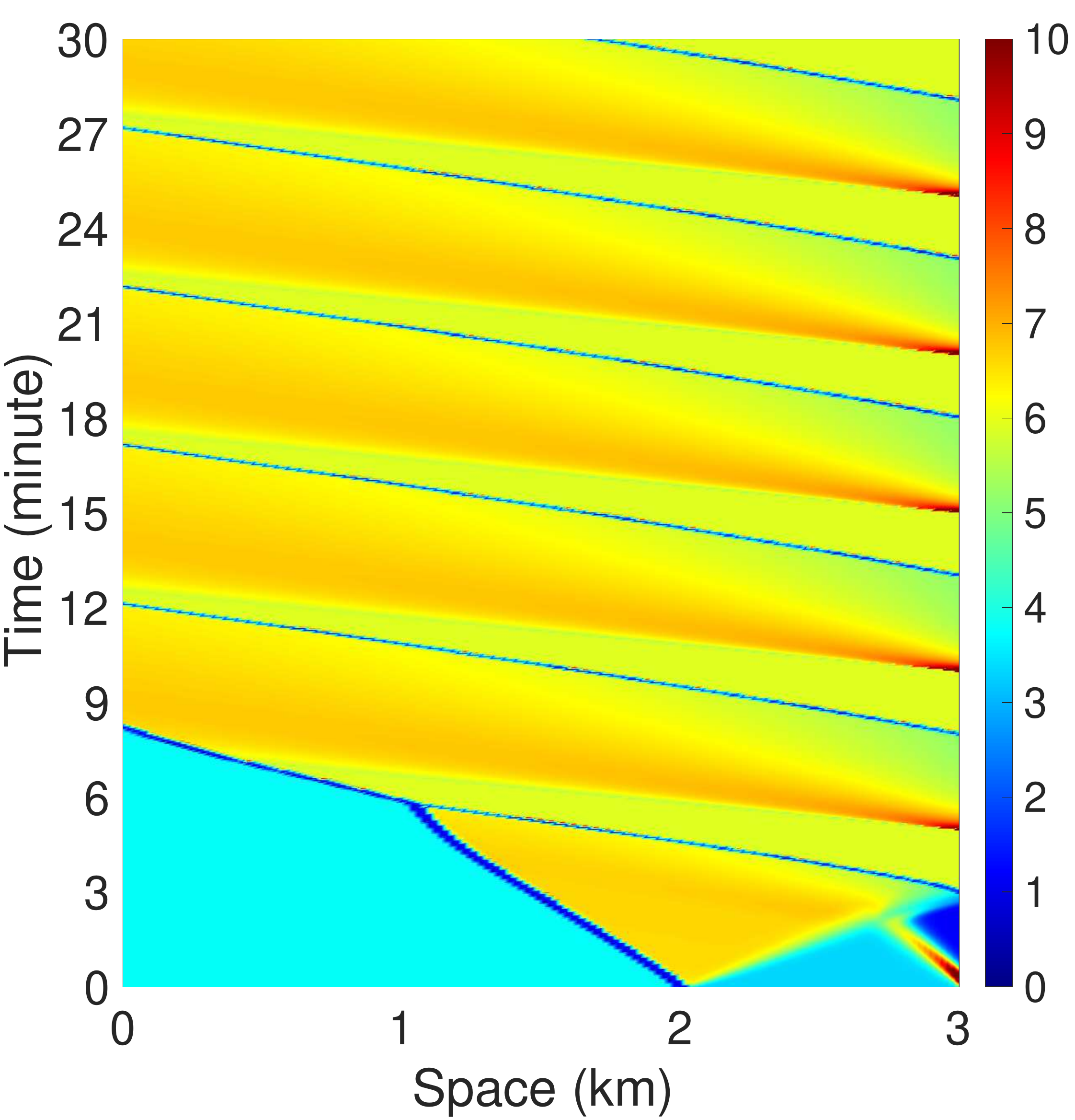}}
\caption{\ref{test2}: Variation of density (a), speed (b), analytical acceleration (c) and $\nox$ emissions (d) in space and time.}
\label{fig:test2_traffico}
\end{figure}

\begin{figure}[hpbt!]
\centering
\includegraphics[width=0.34\linewidth]{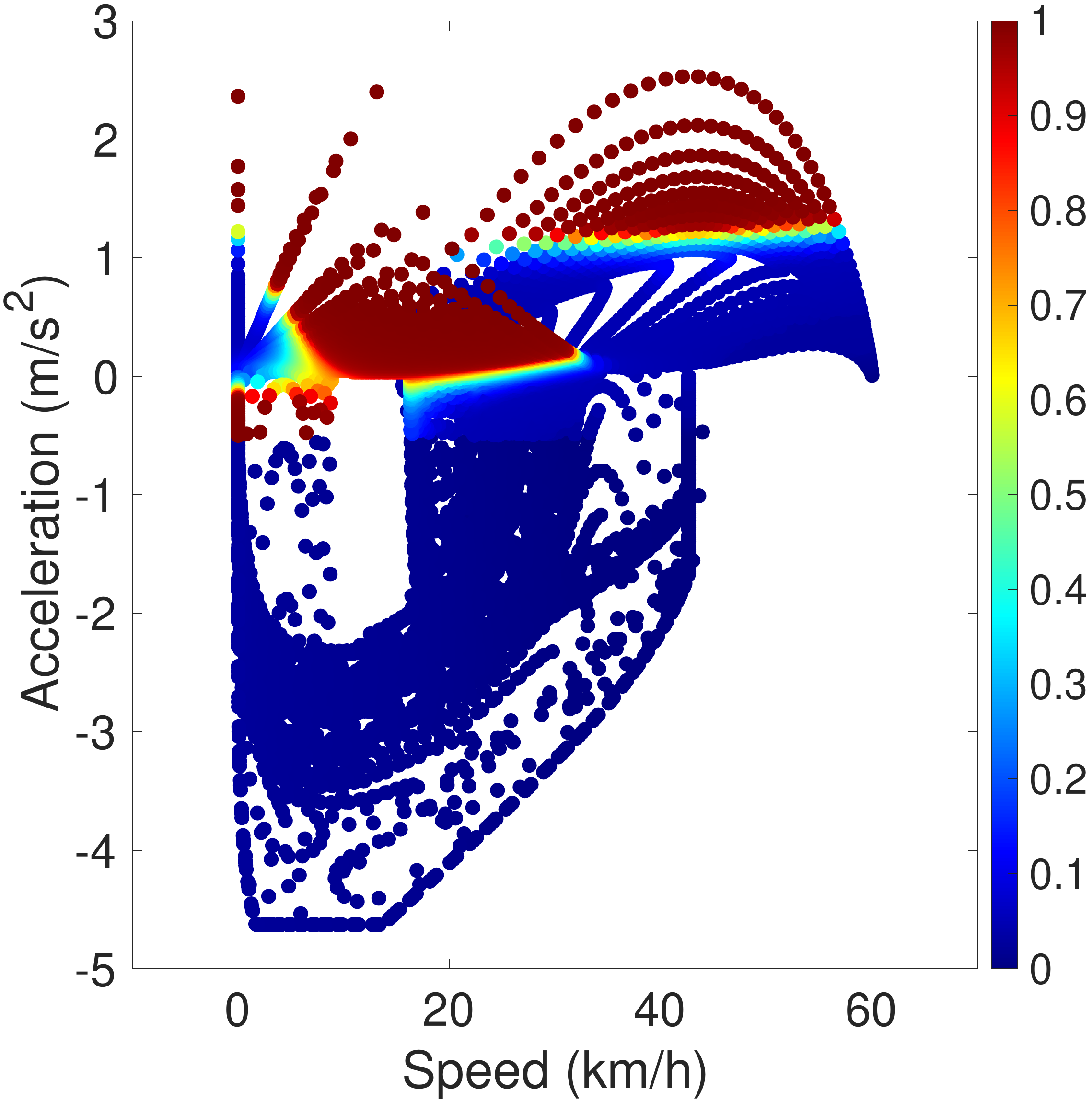}\quad
\includegraphics[width=0.35\linewidth]{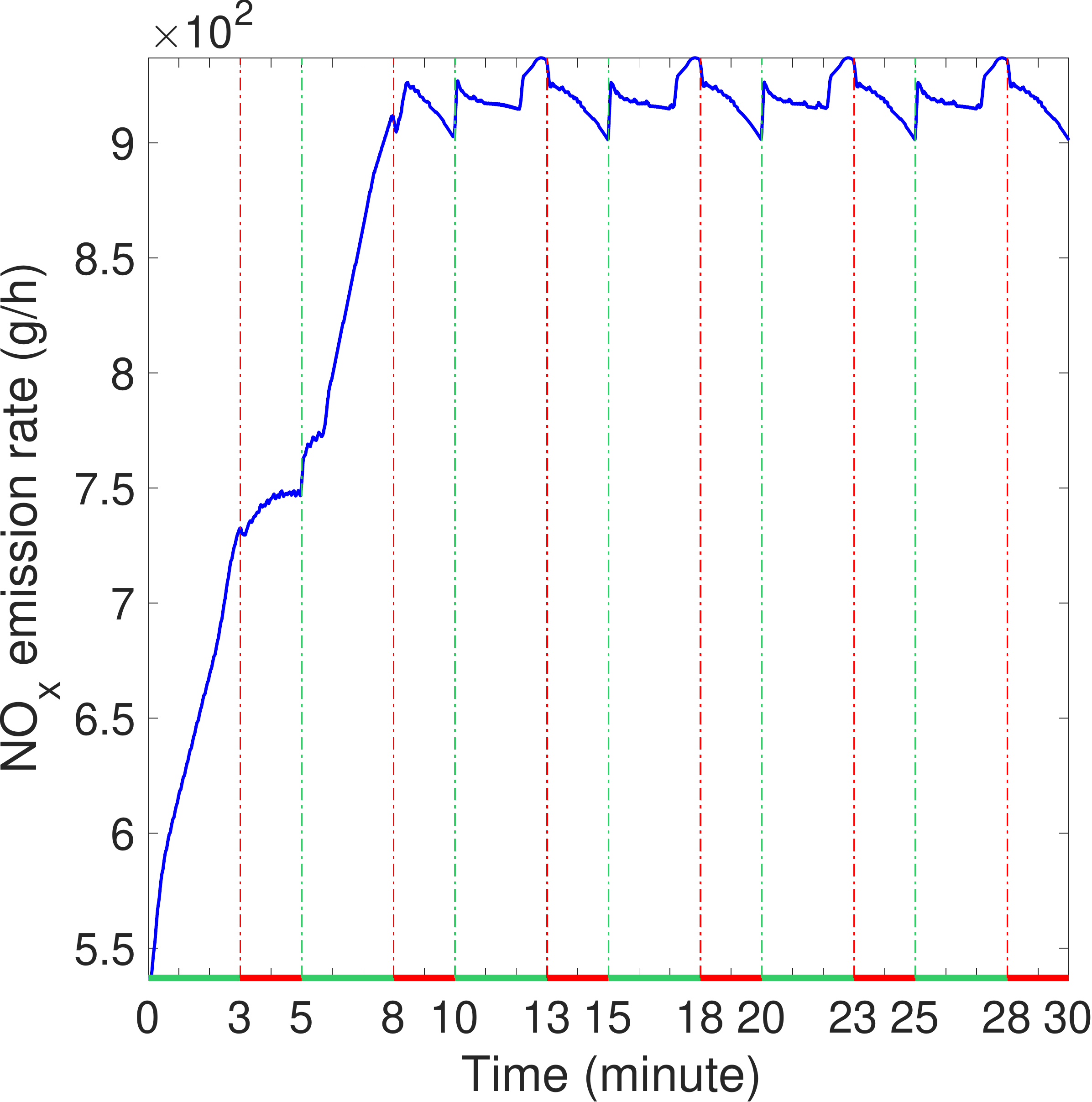}\\
\caption{\ref{test2}: $\nox$ emission rate ($\mygram\per\myhour$) as a function of speed and acceleration (left); variation in time of the total emission rate ($\mygram\per\myhour$) along the entire road (right).}
\label{fig:test2_emissioni}
\end{figure}

Let now $r=t_g/t_r$ be the ratio between the time $t_g$ of the green phase and the time $t_r$ of the red phase respectively, and let
$t_{c}$ be the time of the whole traffic light phase, i.e.\ $t_c=t_g+t_r$. We  consider two different test cases: first we fix the ratio $r$ and we vary the time $t_c$; then, conversely we fix $t_c$ and we vary $r$.

%%% INIZIO TEST 2.1 %%%%%%%%%%
\paragraph{\labeltext{Traffic dynamic 2.1}{test2p1}: Emissions when the ratio $r$ is constant}
In Figure \ref{fig:semafori_rfisso} we show the $\nox$ emissions obtained with $r=3/2$ and three different values of traffic light duration in minutes: on the left we set
$t_c=\myunit{7.5}$ and $t_r=\myunit{3}$, in the center $t_c=\myunit{5}$ and $t_r=\myunit{2}$ and on the right $t_c=\myunit{2.5}$ and $t_r=\myunit{1}$. We observe that the maximum value of the $\nox$ emission rate increases when the frequency of vehicles restarts augments, namely when the time $t_r$ of the red phase is lower.
\begin{figure}[h!]
\centering
\includegraphics[width=0.3\linewidth]{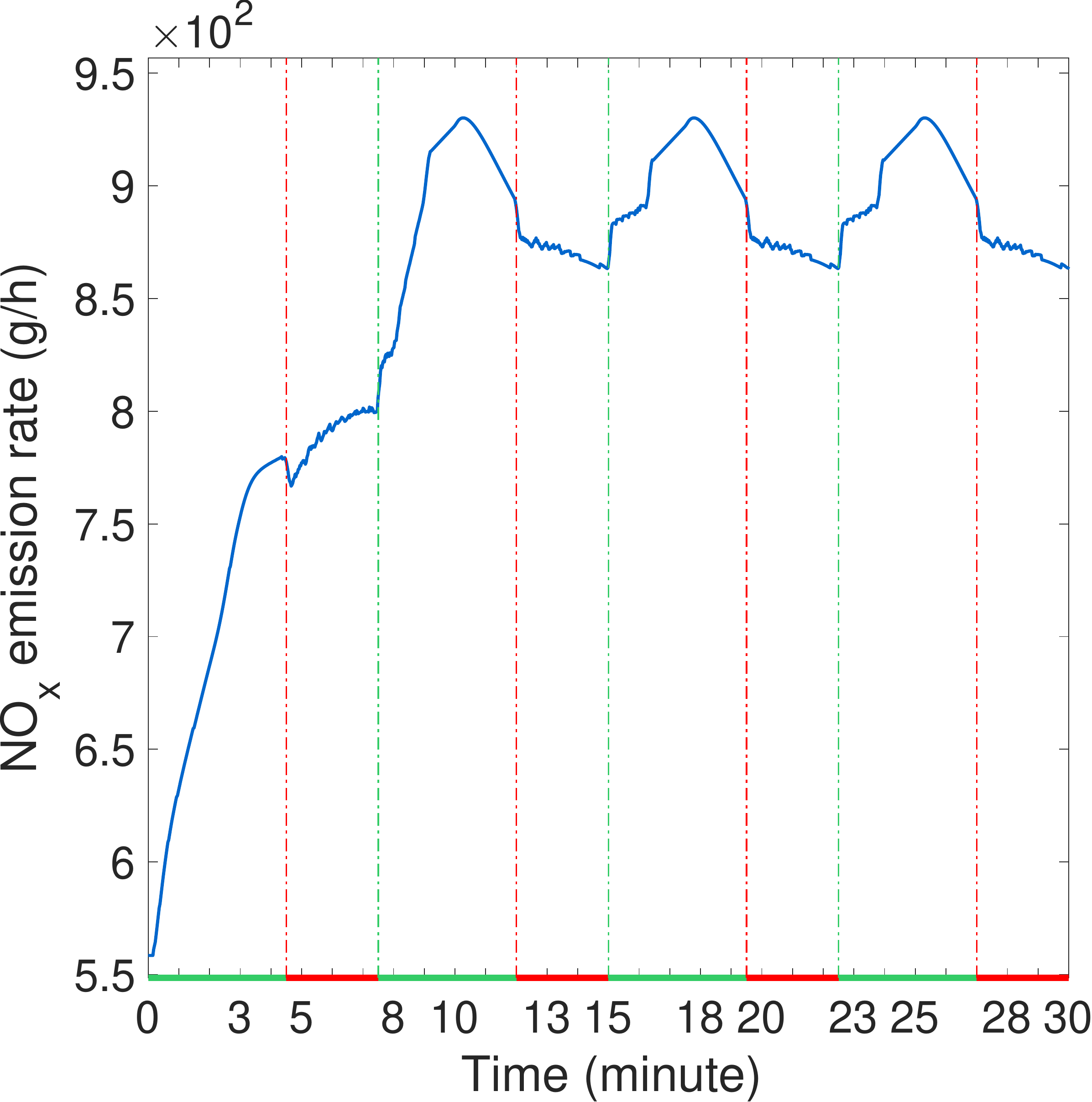}
\includegraphics[width=0.3\linewidth]{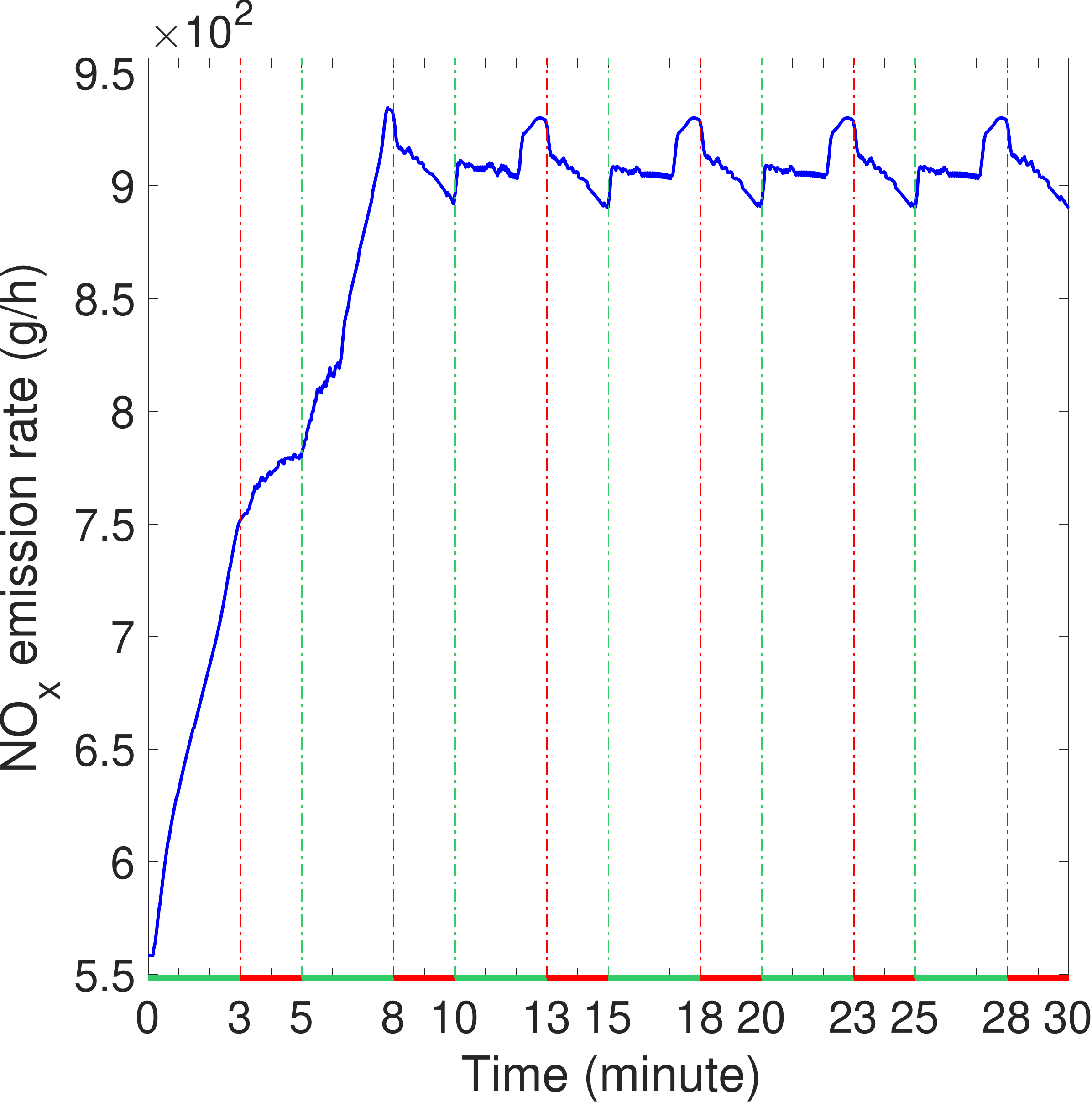}
\includegraphics[width=0.3\linewidth]{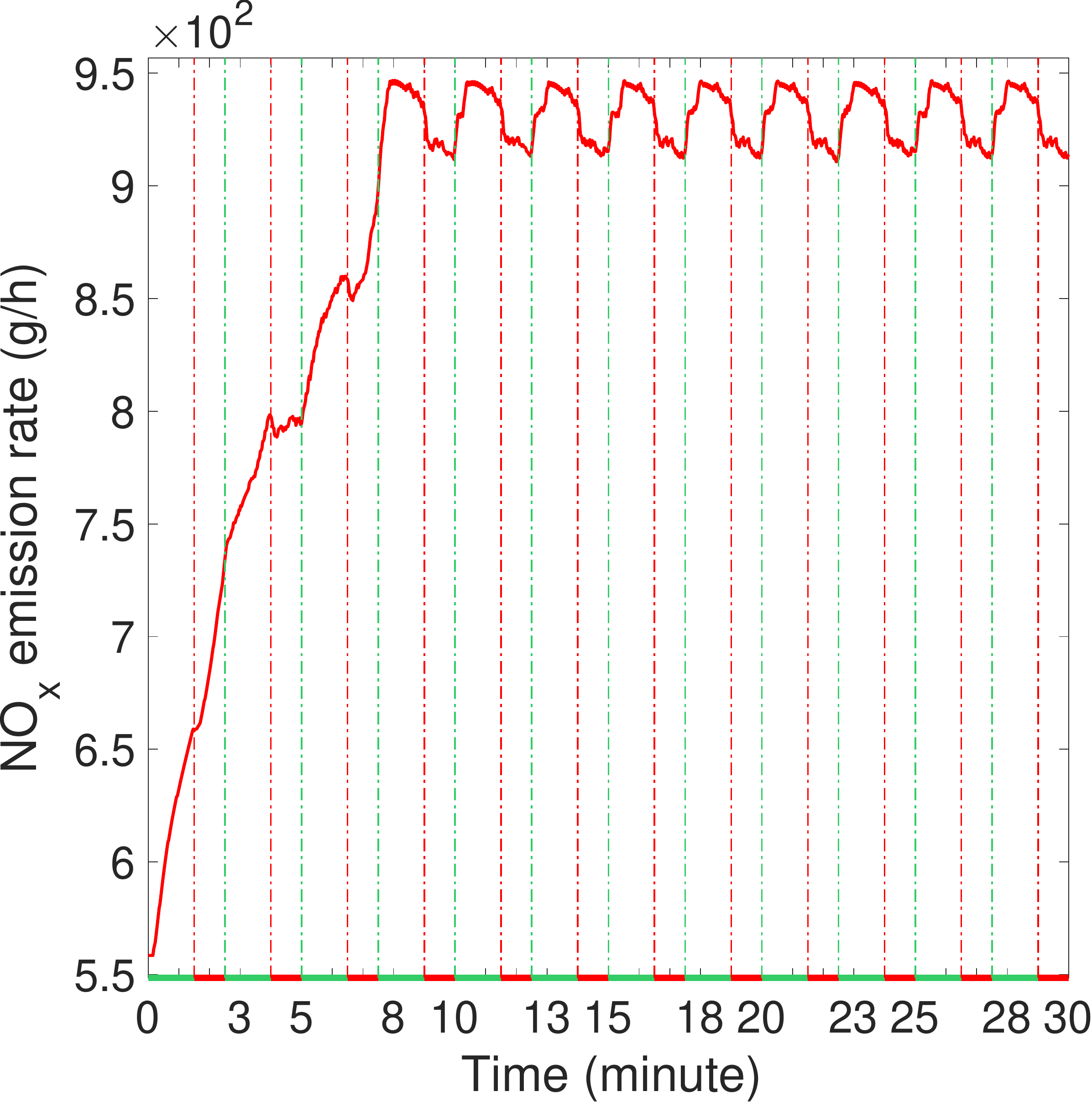}
\caption{\ref{test2p1}: Variation in time of the total $\nox$ emission rate ($\mygram\per\myhour$) along the entire road with $r=3/2$ and varying the traffic light duration $t_c$ in minutes:  $t_c=7.5$ with $t_r=3$ (left); $t_c=5$ with $t_r=2$ (center); $t_c=2.5$ with $t_r=1$ (right).}
\label{fig:semafori_rfisso}
\end{figure}
%%% FINE TEST 2.1 %%%%%%%%%%
%
%%% INIZIO TEST 2.2 %%%%%%%%%%
\paragraph{\labeltext{Traffic dynamic 2.2}{test2p2}: Emissions when the traffic light duration $t_c$ is constant}
In Figure \ref{fig:tc_fisso} we show how the 
$\nox$ emissions vary with different ratio $r$. 
Specifically, we plot $\nox$ total emissions (defined as the sum on the cells of the emission rates, at any time) for one hour of simulation with $r=\{4,\ 3/2,\ 2/3\}$ which is equivalent to assume $(t_g,t_r)=(\myunit{4},\myunit{1})$, $(t_g,t_r)=(\myunit{3},\myunit{2})$, $(t_g,t_r)=(\myunit{2},\myunit{3})$ in minutes, respectively.
We observe that until $t_r\leq t_g$ (solid line and line with circle) the maximum of the emission rate increases when $t_r$ grows, since the are more vehicle restarts; while it decreases with $t_r>t_g$ (line with stars) when there are less vehicles restarts and more phases of stopped traffic.  

\begin{figure}[h!]
\centering
\includegraphics[width=0.3\linewidth]{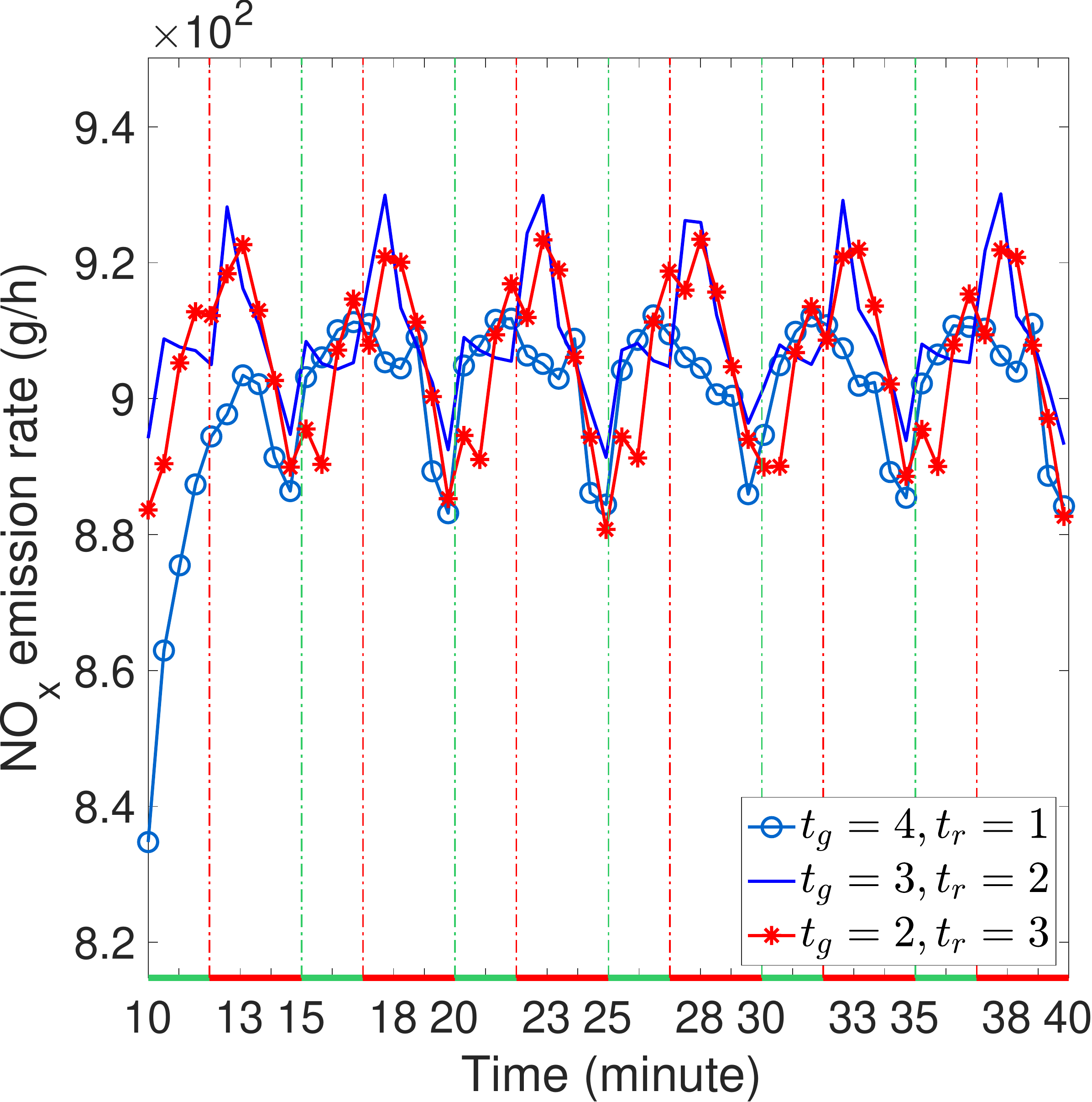}\quad
\caption{\ref{test2p2}: Variation in time of the total emission rate ($\mygram\per\myhour$) along the entire road varying the ratio $r$.}
\label{fig:tc_fisso}
\end{figure}
%%% FINE TEST 2.2 %%%%%%%%%%

%\bigskip
To sum up, the two last examples developed in \ref{test2p1} and \ref{test2p2}, suggest that the emissions grow with the increase of vehicles restarts. 
In particular, we observe from Figure \ref{fig:semafori_rfisso} that the length of the traffic light cycle $t_c$ has an highly influence on emissions, while Figure \ref{fig:tc_fisso} shows that the ratio $r$ between red-light and green-light has a less effect on the asymptotic emission values.

\subsubsection{Production of ozone and diffusion in air}\label{sec:test3}
In this section we are interested in estimating the concentration of ozone along the entire road by means of the system \eqref{eq:sistemone} and its diffusion in the atmosphere by systems \eqref{eq:diffVert} or \eqref{eq:diffOriz}. The reaction rate parameters $k_{1}$, $k_{2}$ and $k_{3}$ are listed in Table \ref{tab:parametriK}.

Let us begin with system \eqref{eq:sistemone}, which estimates the concentration of pollutants on the road. More precisely, given the numerical discretization of the road $[0,L]$ with cells of length $\dx$, we estimate the concentrations of the chemical species on each volume of dimension $\dx^{3}$ corresponding to the cells discretizing the road. For each $x_{i}$, $i=1,\ldots,N_x$, we set the initial concentrations $\CC^{0}_{i}$ as
$\cc_1(x_{i},0)=\cc_3(x_{i},0) = 0$, $\cc_2(x_{i},0)=5.02\times\,\myunit{10^{18}}\mathrm{molecule}/{\mycenti\mymeter^{3}}$
and, according to Section \ref{Air} and relation \eqref{eq:nox_source}, for $\no$ and $\nodue$ we have
\begin{equation*}
\cc_4(x_{i},0) = (1-p)\frac{E_\nox(0)}{\Delta x^3},\quad \cc_5(x_{i},0)=p\frac{E_\nox(0)}{\Delta x^3} \quad\text{ with } p=0.15.
\end{equation*}
Then, for each time step $n$ we compute the source term $s^{n}_{i}$ due to the two traffic dynamics and we apply an ODE solver for \eqref{eq:sistemone} to estimate $\CC^{n}_{i}$. To chose the right solver, we analyze the stiffness of the problem, see e.g. \cite[Chapter 6]{lambert1991JW}, without source term. Therefore, we consider the linearization of $G$ given in \eqref{eq:sistemaVett}, in a neighbourhood of the initial data $\CC^{0}=\CC(x,0)$.
The eigenvalues of the Jacobian of $G$ range in a large interval of values, due to the order of magnitude of chemical species and reaction coefficients $k_{1}$, $k_{2}$ and $k_{3}$ (see Table \ref{tab:parametriK}). In particular, we have $\lambda_{1}$ with order of magnitude $10^{7}$, $\lambda_{2}$ of $10^{1}$ and $\lambda_{3}=\lambda_{4}=\lambda_{5}=0$. A similar result is obtained adding the source term $s(x,t)\neq0$.
Hence, the problem is stiff and we need to approximate system \eqref{eq:sistemaVett} with an adaptive step size method. To this end we solve \eqref{eq:sistemaVett} using the standard Matlab tool $\mathtt{ode23s}$, which makes use of modified Rosenbrock formula of order 2 and works with an adaptive step size.

%In Figure \ref{fig:ozono_surf}, we show the $\otre$ evolution along the entire road, during half an hour of simulation. We observe a behaviour amenable to the traffic variables dynamics given in Figures \ref{fig:test1_traffico}-\ref{fig:test2_traffico}.
To compare the results obtained with \ref{test1} and \ref{test2} we compute the total concentration of all the chemical species along the entire road, i.e.\ for every time $t^{n}$ we sum the concentrations on all the cells.
In Figure \ref{fig:test3_reazioni} we show the variation in time of the concentration of $\otre$ and $\odue$. We observe that the ozone concentration has a huge growth (\ref{test1} - blue-solid line), which is further amplified by the presence of the traffic light (\ref{test2} - red-circles line).
On the other hand, the oxygen concentration is almost constant in both the cases,  with moderated dependence on traffic light. 

To further investigate the impact of the traffic light on all the chemical species concentration, we solve our system starting from the $\nox$ emission rates computed in \ref{test2p1} in which we fix the ratio $r$ constant. Thus, we compute the total amount of $\otre$, $\no$, $\nodue$ and $\ouno$, obtained during the whole simulation along the entire road. Then, we measure the variation of each concentration with respect to the one obtained in the test case without traffic light \ref{test1}.
The results in Table \ref{tab:r_fisso} show that all the concentrations increase coherently with the behavior of the $\nox$ source term, see Figure \ref{fig:semafori_rfisso}. So, we can conclude that the duration of traffic cycles
affects all the chemical species production more than the ratio between green and
red phase.

%\begin{figure}[h!]
%\centering
%\includegraphics[width=0.35\linewidth]{}\quad
%\includegraphics[width=0.35\linewidth]{}
%\caption{$\otre$ evolution along the entire road, for half an hour of simulation, in the case of dynamics without (left) and with (right) traffic light.}
%\label{fig:ozono_surf}
%\end{figure}

\begin{figure}[h!]
\centering
\includegraphics[width=0.35\linewidth]{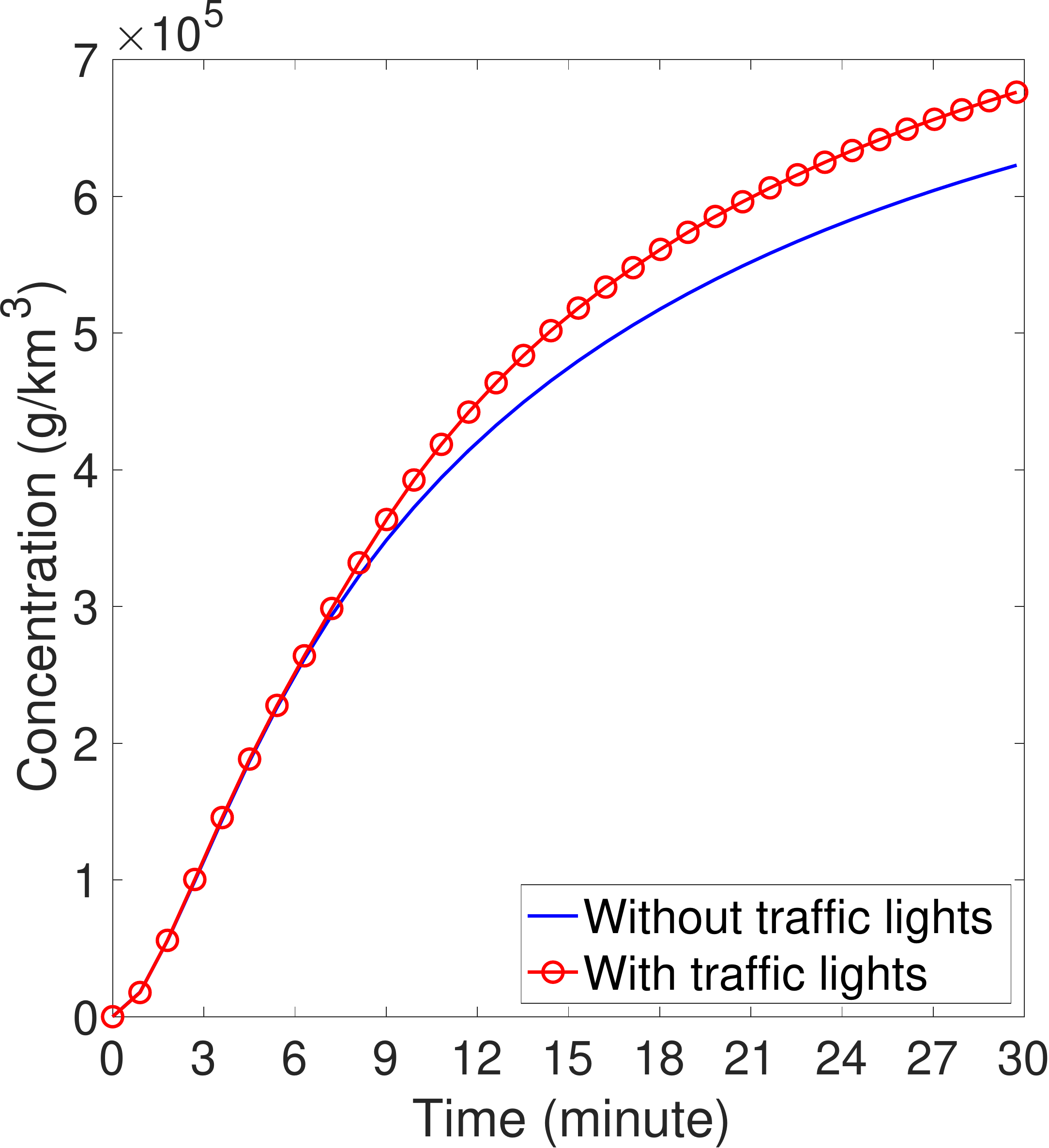}\quad
\includegraphics[width=0.36\linewidth]{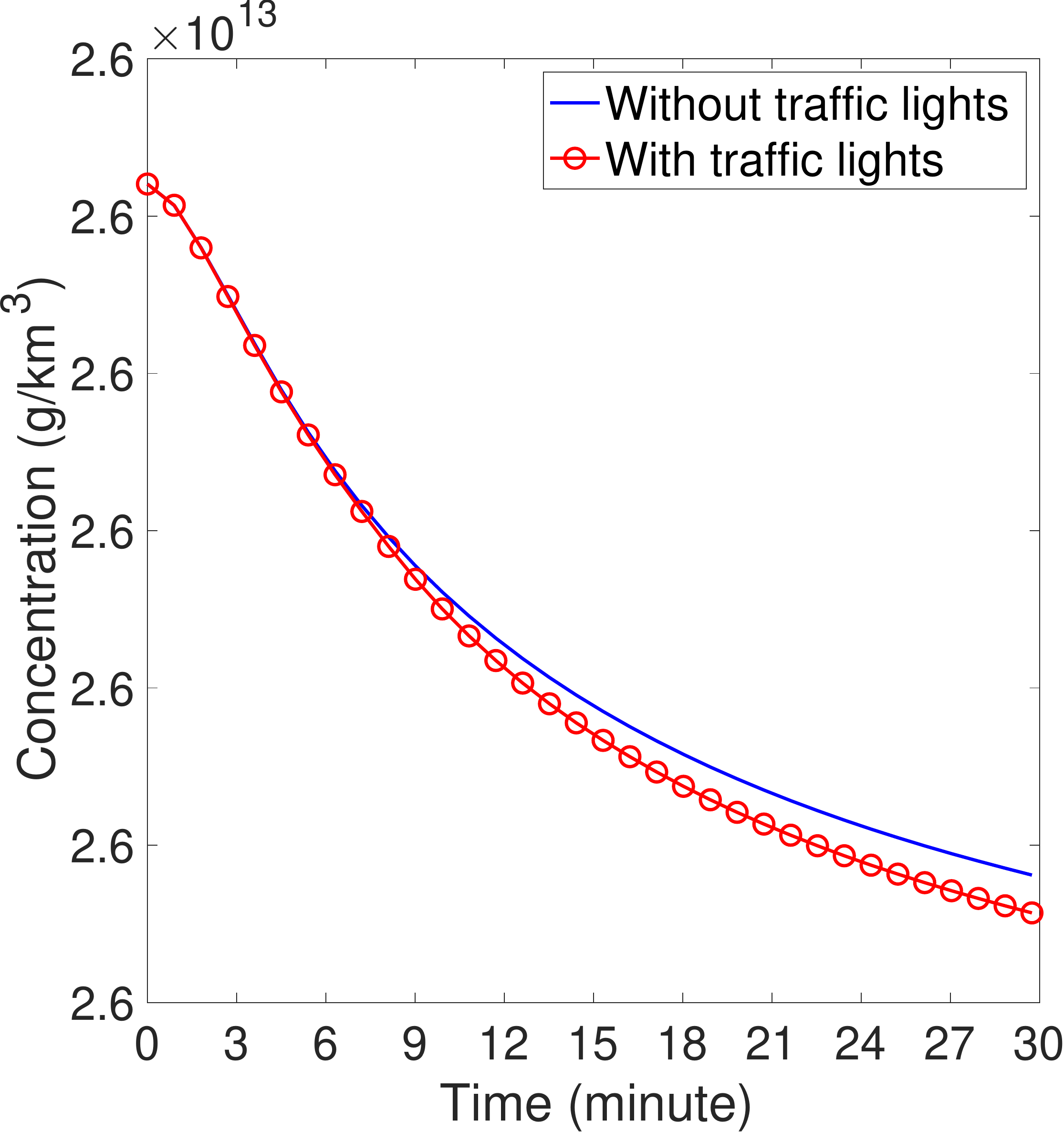}
\caption{Variation in time of the total concentration ($\mygram/\mykilo\mymeter^3$) of $\otre$ (left) and $\odue$ (right), in the case of dynamics with (red-circles) and without (blue-solid) traffic light.}
\label{fig:test3_reazioni}
\end{figure}

\begin{table}[h!]
\centering
\footnotesize
\renewcommand{\arraystretch}{1.1}
\begin{tabular}{|c|c|c|c|}\hline
$t_c=t_r+t_g$ & $\myunit{(3+4.5)}\mymin$ &  $\myunit{(2+3)}\mymin$ & $\myunit{(1+1.5)}\mymin$ \\\hline
$\otre$ & 2.95e+07 & 3.54e+07 & 3.91e+07 \\\hline
$\no$ & 1.09e+09 & 1.28e+09 & 1.43e+09 \\\hline
$\nodue$ & 1.55e+08 & 1.81e+08 & 2.02e+08 \\\hline
$\ouno$ & 7.00e+01 & 8.21e+01 & 9.13e+01 \\\hline
\end{tabular}
\caption{Variation of the total amount of $\otre$, $\no$, $\nodue$ and $\ouno$ concentration ($\mygram/\mykilo\mymeter^3$) computed with three different traffic light duration (\ref{test2p1}) with respect the total amount of concentrations without traffic light (\ref{test1}).
}
\label{tab:r_fisso}
\end{table}

\medskip
We now consider the diffusion of pollutants in the air with the two different approaches proposed in Section \ref{sec:diff}. In light of the results obtained in the previous test, we assume that the oxygen remains constant during the simulation.
We use finite differences to discretize systems \eqref{eq:diffVert} and \eqref{eq:diffOriz}. Specifically, an Euler implicit approximation in time and in space a centered difference for the diffusion term coupled with the upwinding of the first order term in \eqref{eq:diffOriz}. 

\paragraph{Vertical diffusion}
To analyze the vertical diffusion of the chemical species, we consider the domain $\Omega$ of \eqref{eq:diffVert} with $L=500\,\mymeter$ and $H=0.5\,\mymeter$, discretized via a numerical grid with $\dx=5\,\mymeter$ and $\dy=0.02\,\mymeter$. In order to define the concentration of pollutants per unit of volume, we assume that $\CC$ is constant along the third component $\dz$, which is fixed equal to $\dy$. 
We set $\CC_{0}\equiv 0$ and, following \cite{alvarez2017JCAM, Sportisse2010}, we fix $\mu=10^{-8} \,\km^{2}\per\myhour$. 
We now consider the traffic dynamics described in \ref{test1} and \ref{test2} during a time interval $[0,T]$ with $T =4\,\myhour$. We assume that the source of emissions corresponding to the vehicle exhaust pipe is placed at half a meter in height, so as to be able to analyze the concentrations of pollutants at $1\,\mymeter$ from the ground. The source of pollutants is used as Dirichlet boundary condition for $\no$ and $\nodue$ in \eqref{eq:BC1} and \eqref{eq:BC2}, i.e.\ for $n=1,\dots,N_{t}$ and $i=1,\dots,N_{x}$ we have
\begin{align*}
	(\cc_{4})^{n}_{i1} &= (1-p)s^{n}_{i}\dt\\
	(\cc_{5})^{n}_{i1} &= p\,s^{n}_{i}\dt
\end{align*}
with $s^{n}_{i}$ the source term due to traffic emissions. In Figure \ref{fig:diffVert1} we compare the concentration of ozone in $\Omega$ at different times obtained from the two different traffic dynamics. From the plots it is clear that the presence of traffic lights highly increases the diffusion of ozone in the atmosphere, resulting in a higher concentration in all the domain $\Omega$. We recall that these results are obtained starting from zero pollutant concentrations, i.e.\ $\CC_{0}=0$.

\begin{figure}[h!]
\centering
\subfloat[][No traffic light $t=1\,\myhour$.]{
\includegraphics[width=0.3\columnwidth]{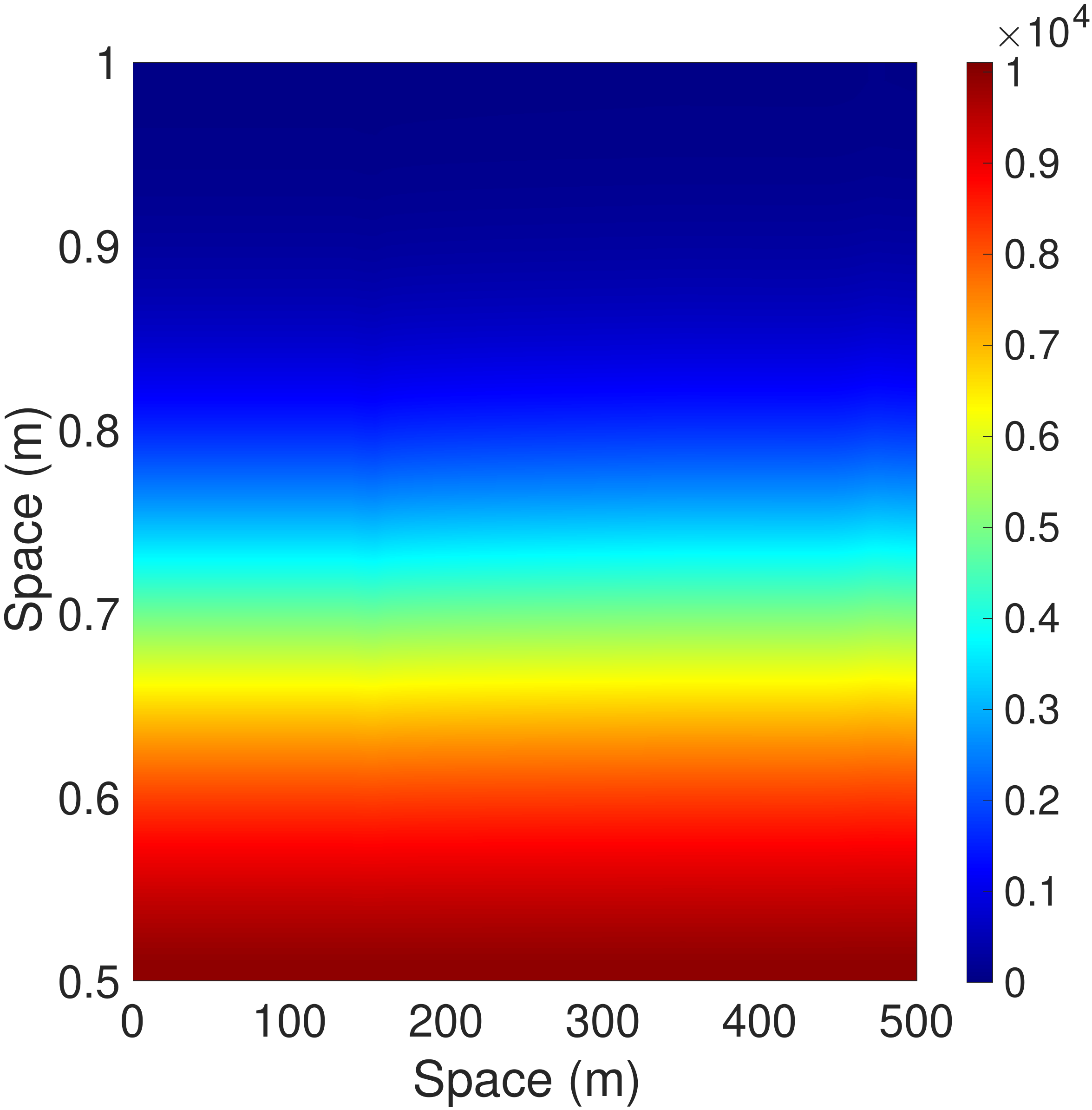}
}\quad
\subfloat[][No traffic light $t=T/2$.]{
\includegraphics[width=0.3\columnwidth]{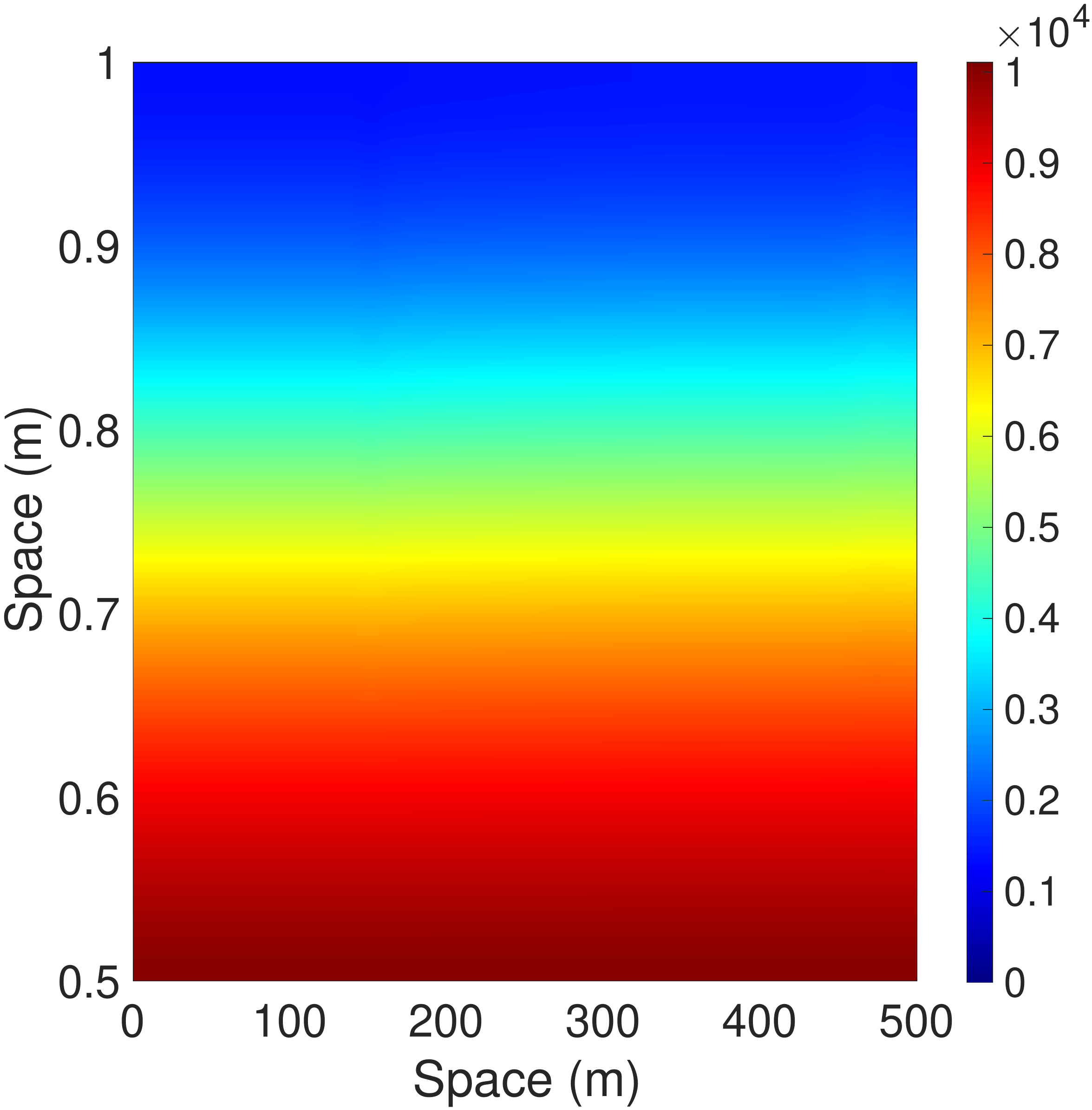}}
\quad
\subfloat[][No traffic light $t=T$.]{
\includegraphics[width=0.3\columnwidth]{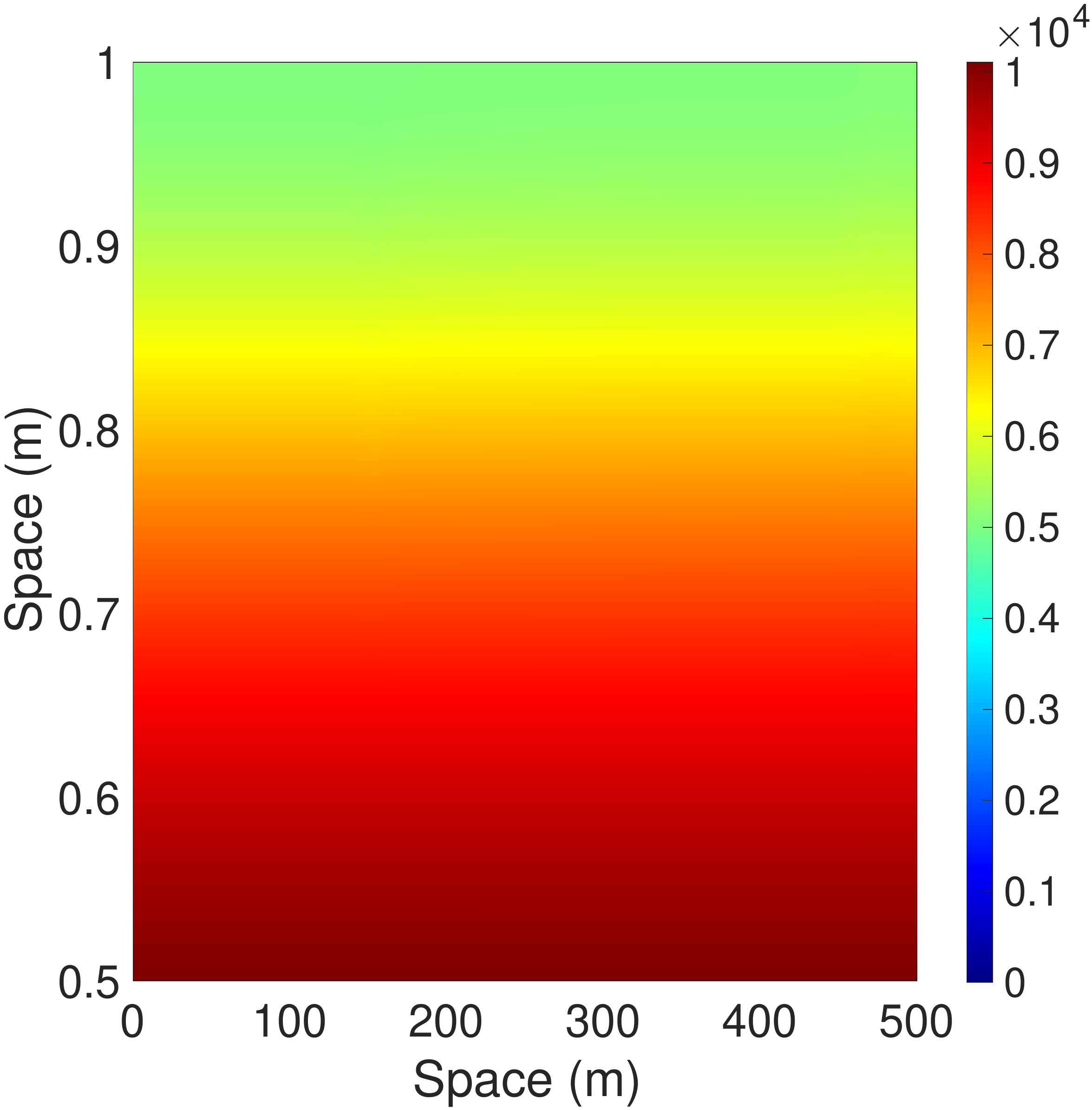}}
\\
\subfloat[][Traffic lights $t=1\,\myhour$.]{
\includegraphics[width=0.3\columnwidth]{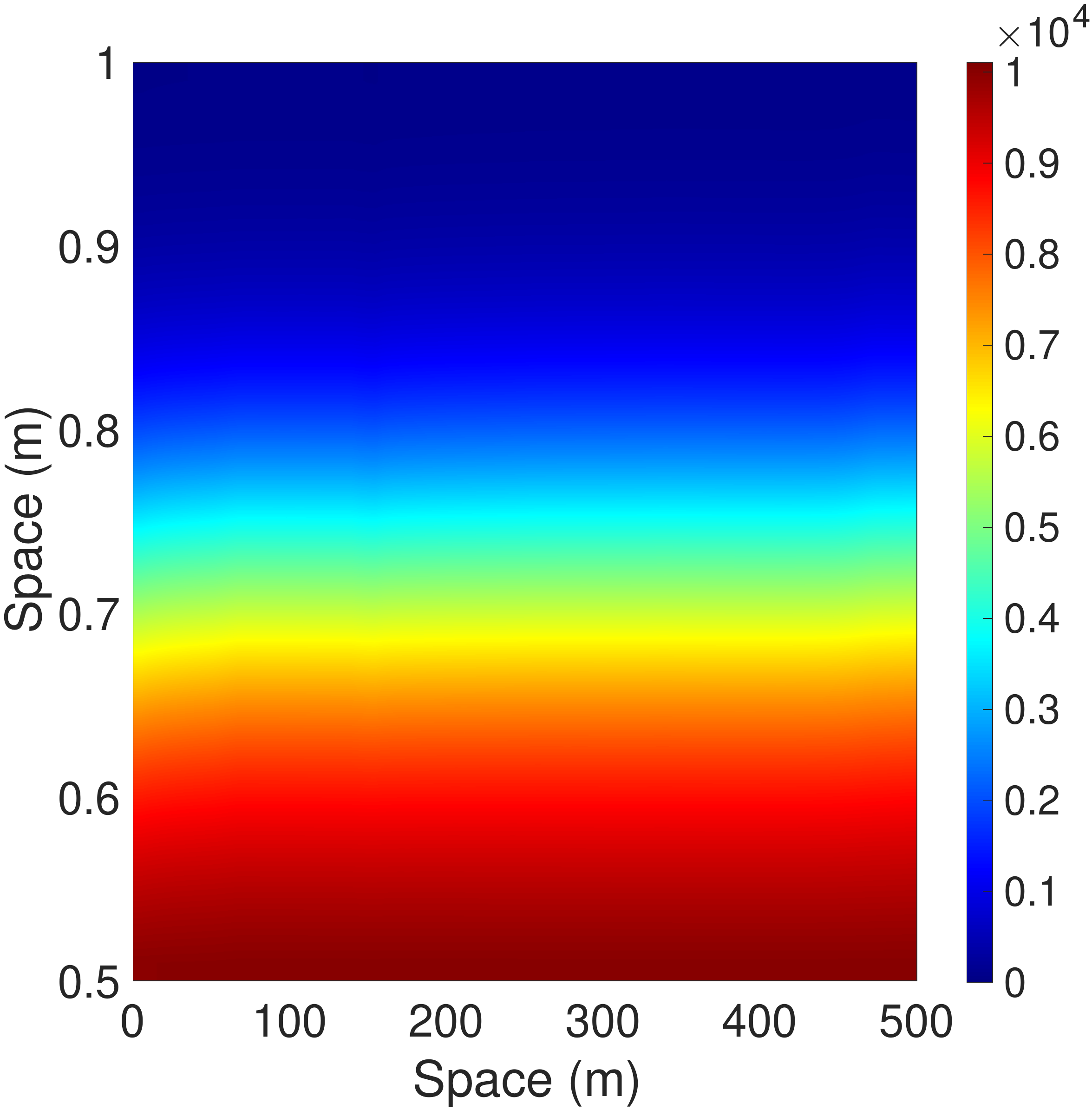}}\quad
\subfloat[][Traffic lights $t=T/2$.]{
\includegraphics[width=0.3\columnwidth]{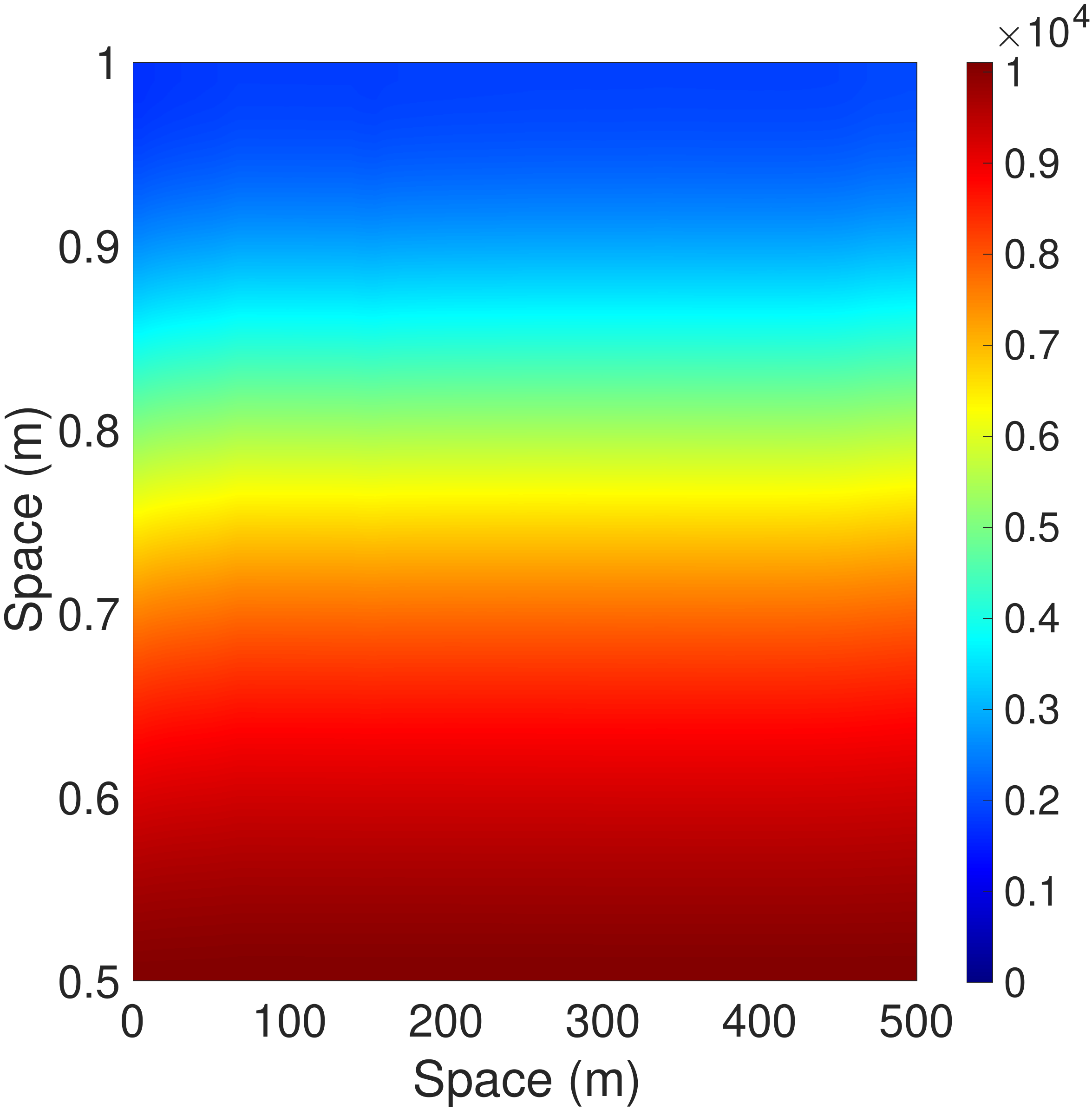}}\quad
\subfloat[][Traffic lights $t=T$.]{
\includegraphics[width=0.3\columnwidth]{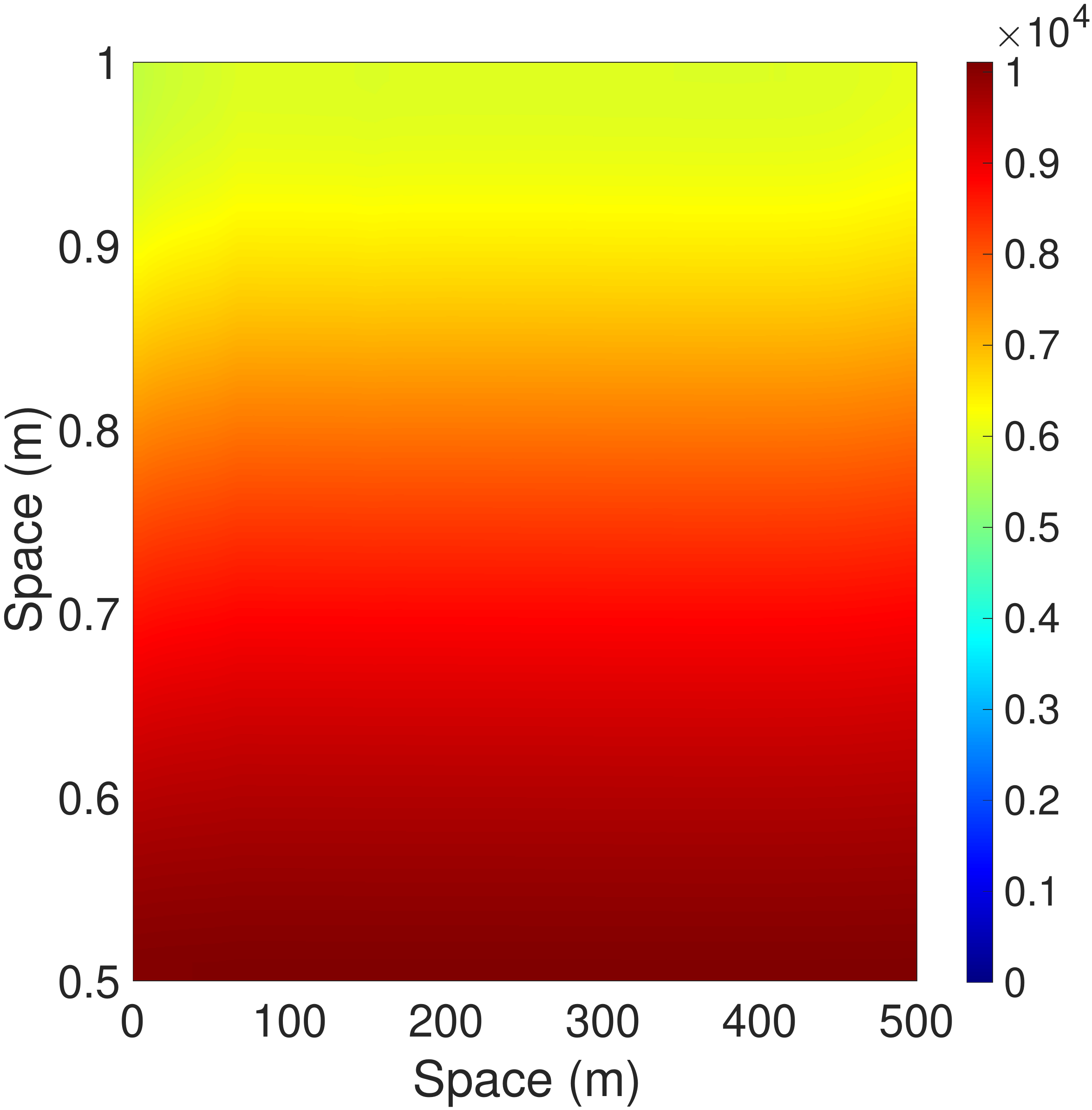}}
\caption{Vertical diffusion of ozone concentration ($\mygram\per\km^{3}$) in $\Omega$ at different times with (bottom) and without (top) traffic lights.}
\label{fig:diffVert1}
\end{figure}

In Figure \ref{fig:diffVert2} we show the variation in time of the concentration of ozone at the fixed height of $1\,\mymeter$ from the ground. Again, we see higher level of ozone in presence of traffic lights with respect to the case of no traffic lights on the road. More precisely, we are interested in estimating the increment produced with and without traffic lights. Indeed, denoting by $\cc_{3}^{1}$ and $\cc_{3}^{2}$ the concentration of ozone obtained from \ref{test1} and \ref{test2}, respectively, we compute the average concentration at the the final time at $1\,\mymeter$ from the ground, i.e.\ for $j=N_{y}$ and $n=N_{t}$ we evaluate
\begin{equation*}
	M^{1}=\frac{1}{N_{x}}\sum_{i=1}^{N_{x}}(\cc_{3}^{1})^{N_{t}}_{iN_{y}} \qquad\text{and}\qquad
	M^{2}=\frac{1}{N_{x}}\sum_{i=1}^{N_{x}}(\cc_{3}^{2})^{N_{t}}_{iN_{y}}.
\end{equation*}
From our test we obtain $M^{1}=5.06\times10^{3}\,\mygram\per\km^{3}$ and $M^{2} = 5.93\times 10^{3}\,\mygram\per\km^{3}$, therefore the presence of traffic lights causes a 18\% increase in the final average concentration of ozone at $1\,\mymeter$ from the ground compared to the absence of a traffic light. 

\begin{figure}[h!]
\centering
\includegraphics[width=0.3\columnwidth]{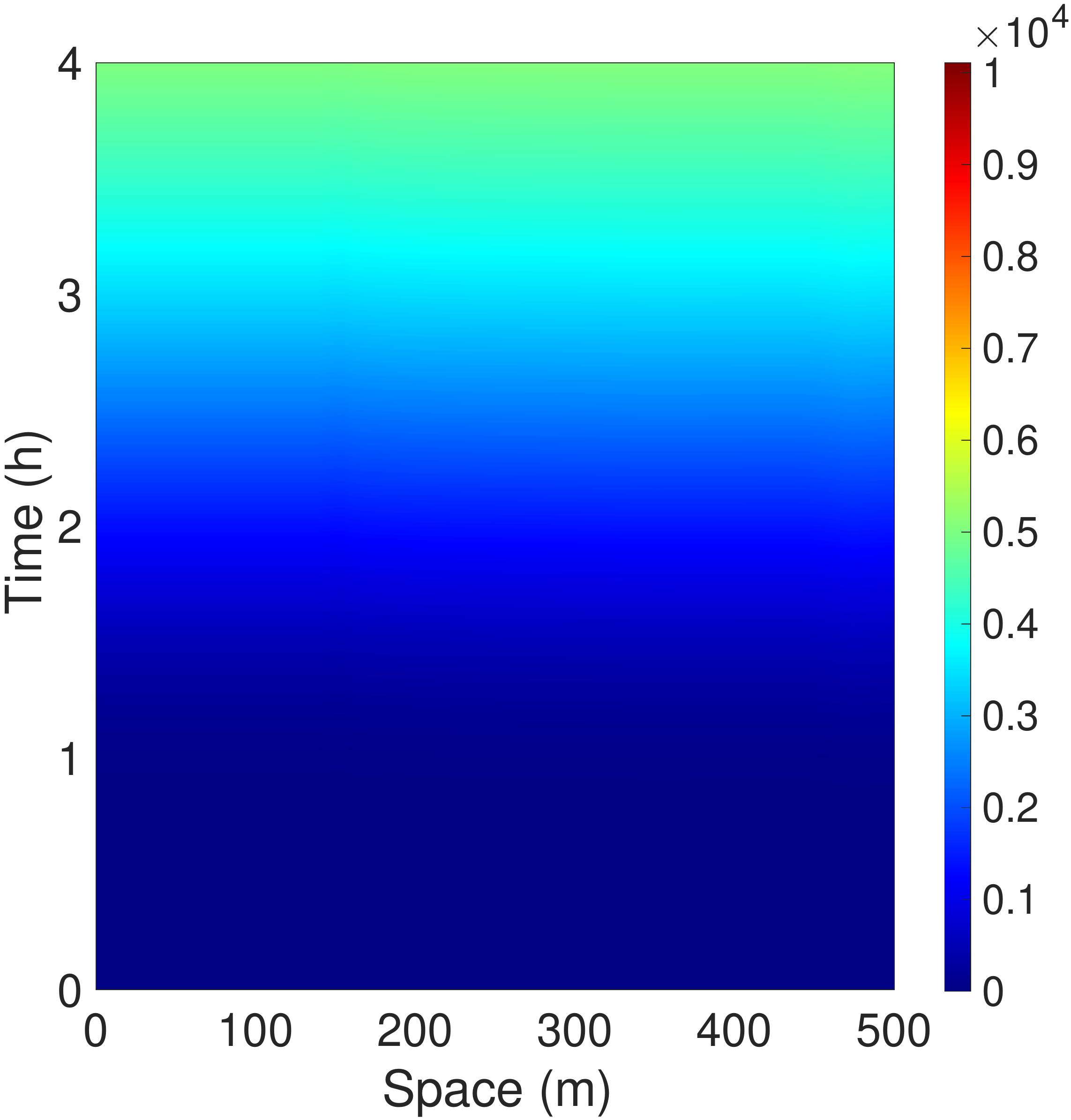}\quad
\includegraphics[width=0.3\columnwidth]{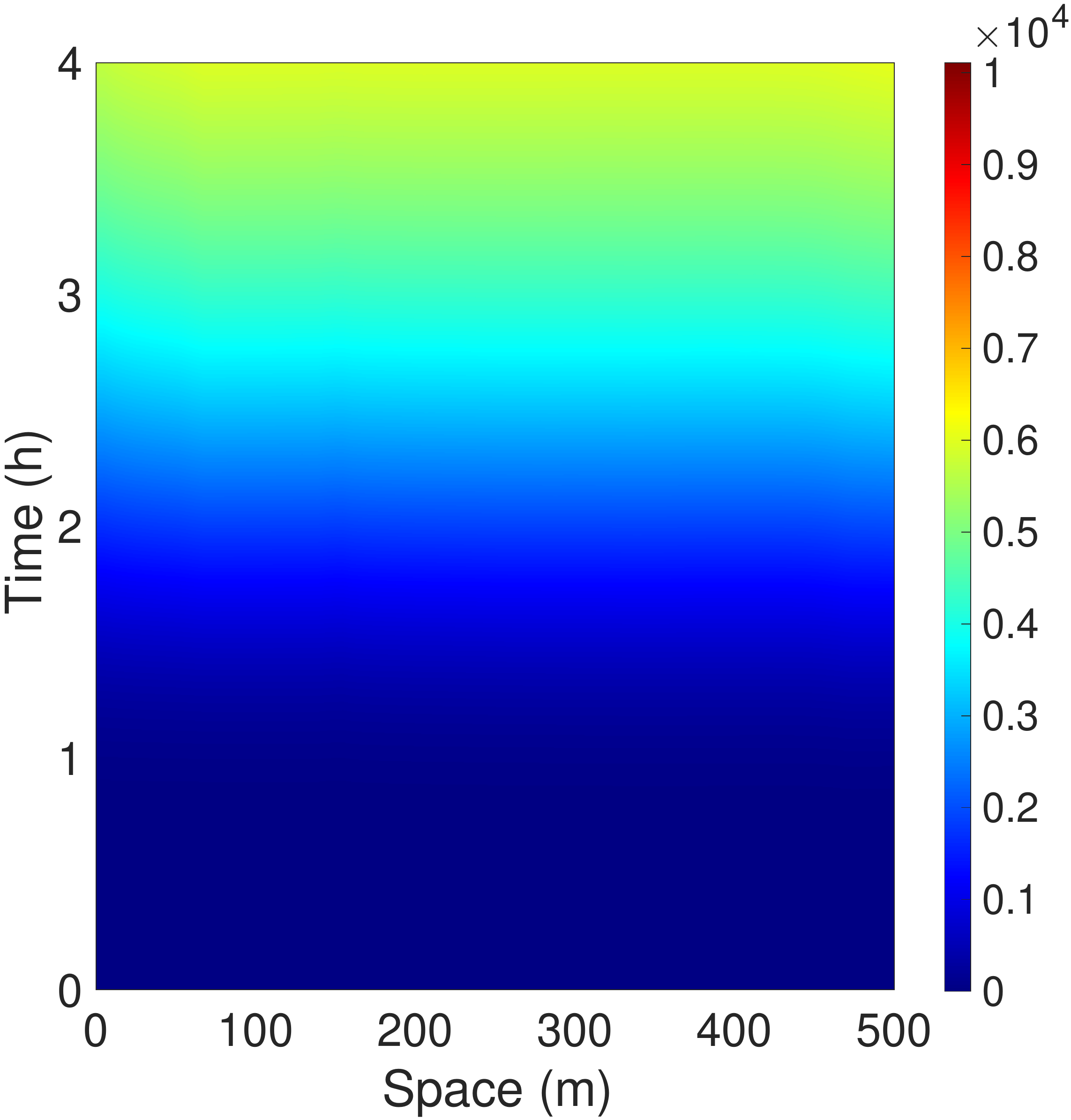}
\caption{Diffusion of ozone concentration ($\mygram\per\km^{3}$) in time at $1\,\mymeter$ height with (right) and without (left) traffic lights.}
\label{fig:diffVert2}
\end{figure}

%%%%-------------------------------%%%%
\paragraph{Horizontal diffusion}
To analyze the horizontal diffusion of pollutants, we consider the domain $\Omega$ of \eqref{eq:diffOriz} with $\lx=\ly=500\,\mymeter$, discretized via a numerical grid with $\dx=\dy=5\,\mymeter$. We assume again that $\CC$ is constant along the third direction of length $\dz=\dy$, in order to consider the concentration per unit of volume.
We fix $\CC_{0}\equiv 0$, $\mu=10^{-8} \,\km^{2}\per\myhour$, $\ventox=-1\,\km\per\myhour^{2}$ and $\ventoy=0.2\,\km\per\myhour^{2}$.  
We consider again the traffic dynamics described in \ref{test1} and \ref{test2} during a time interval $[0,T]$ with $T =30\,\mymin$. The contribution of the traffic dynamics is used as a source term for $\no$ and $\nodue$ in the middle of the domain $\Omega$, in correspondence of the road, i.e.\ for each $n=1,\dots,N_{t}$ and $i=1,\dots,N_{x}$ we have
\begin{equation*}
\begin{array}{lr}
S^{n}_{ij} = (0,0,0,0,0) & \text{for $j\neq N_{y}/2$}\\
S^{n}_{ij} = (0,0,0,(1-p)s^{n}_{i},p\,s^{n}_{i}) & \text{for $j= N_{y}/2$}
\end{array}
\end{equation*}
with $s^{n}_{i}$ defined in \eqref{eq:nox_source}. In Figure \ref{fig:diffOriz} we compare the concentration of ozone in $\Omega$ at different times obtained from the two different traffic dynamics. The road is represented by the black line in the middle. We stress that the dynamics shown in the plots take into account zero pollutant concentrations at the beginning of the simulation process. The wind is responsible for the asymmetrical concentration of ozone in $\Omega$, which is higher in the upper part of the plots. The presence of traffic lights causes an increase of ozone production, which is further diffused by the wind. Similarly to the vertical case, denoting by $\cc_{3}^{1}$ and $\cc_{3}^{2}$ the concentration of ozone obtained from \ref{test1} and \ref{test2}, respectively, we compute the final average ozone concentration at $50\,\mymeter$ from the road, i.e.\ for $j=60$ and $n=N_{t}$ we compute
\begin{equation*}
	M^{1}=\frac{1}{N_{x}}\sum_{i=1}^{N_{x}}(\cc_{3}^{1})^{N_{t}}_{i60} \qquad\text{and}\qquad
	M^{2}=\frac{1}{N_{x}}\sum_{i=1}^{N_{x}}(\cc_{3}^{2})^{N_{t}}_{i60}.
\end{equation*}
We obtain $M^{1}=7.39\times10^{3}\,\mygram\per\km^{3}$ and $M^{2}=8.1\times 10^{3}\,\mygram\per\km^{3}$, hence the combination of wind and traffic lights causes an increase of about 11\% in the final average concentration of ozone at $50\,\mymeter$ from the road compared to the case of no traffic lights. %In this case the concentration of $\otre$ corresponds to approximately $4\,\mathrm{ppb}$.

\begin{figure}[h!]
\centering
\subfloat[][No traffic light $t=7\,\mymin$.]{
\begin{overpic}[width=0.3\columnwidth]{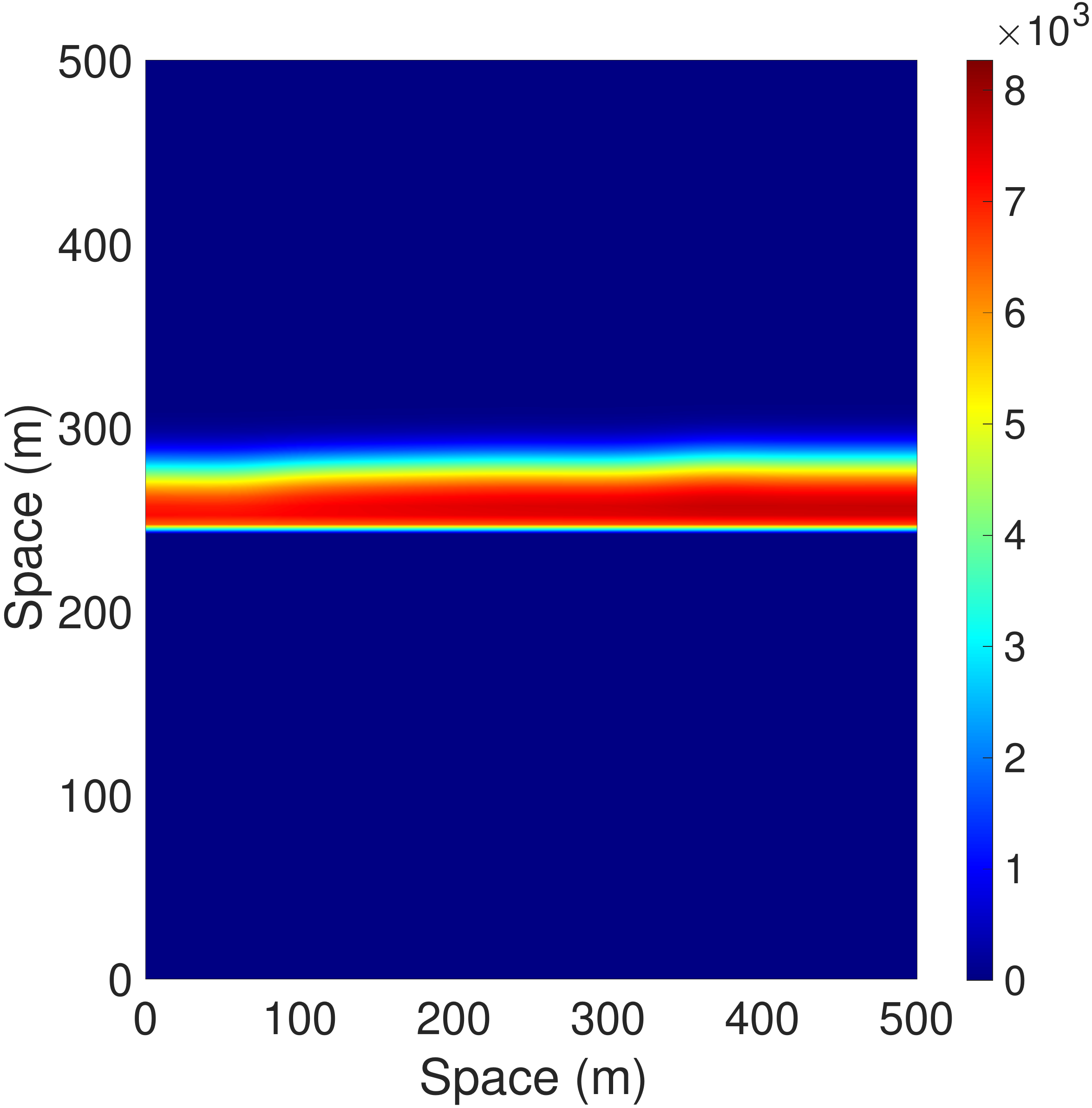}
\put(15,85){\tikz \draw[white,->,thick] (0,0)--(-0.5,0.1);}
\put(20,88){\color{white}\text{\footnotesize$\vento$}}
\put(12.5,52.3){\tikz \draw[black,ultra thick] (0,0)--(3.29,0);}
\end{overpic}
}\quad
\subfloat[][No traffic light $t=T/2$.]{
\begin{overpic}[width=0.3\columnwidth]{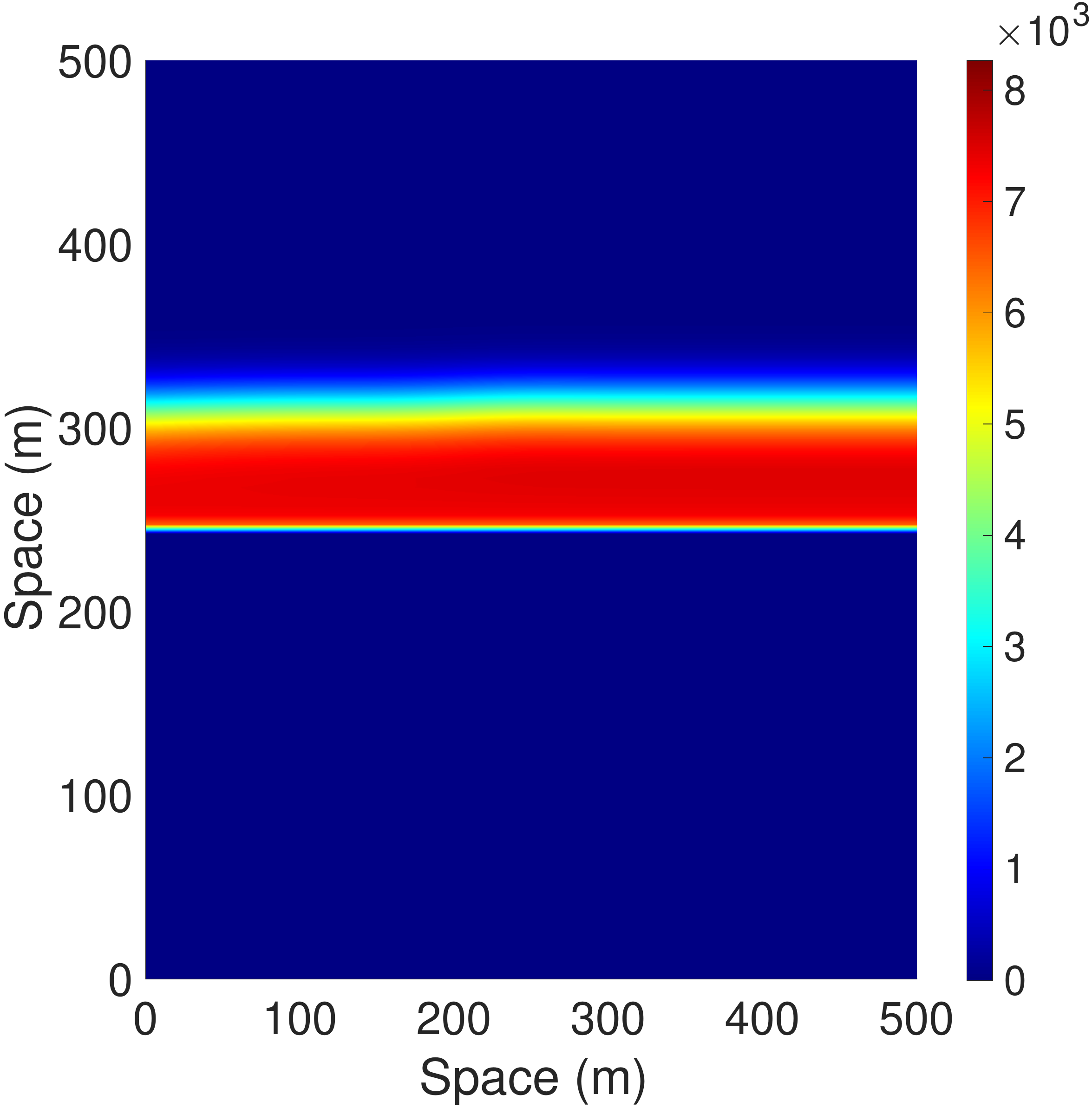}
\put(15,85){\tikz \draw[white,->,thick] (0,0)--(-0.5,0.1);}
\put(20,88){\color{white}\text{\footnotesize$\vento$}}
\put(12.5,52.3){\tikz \draw[black,ultra thick] (0,0)--(3.29,0);}
\end{overpic}
}\quad
\subfloat[][No traffic light $t=T$.]{
\begin{overpic}[width=0.3\columnwidth]{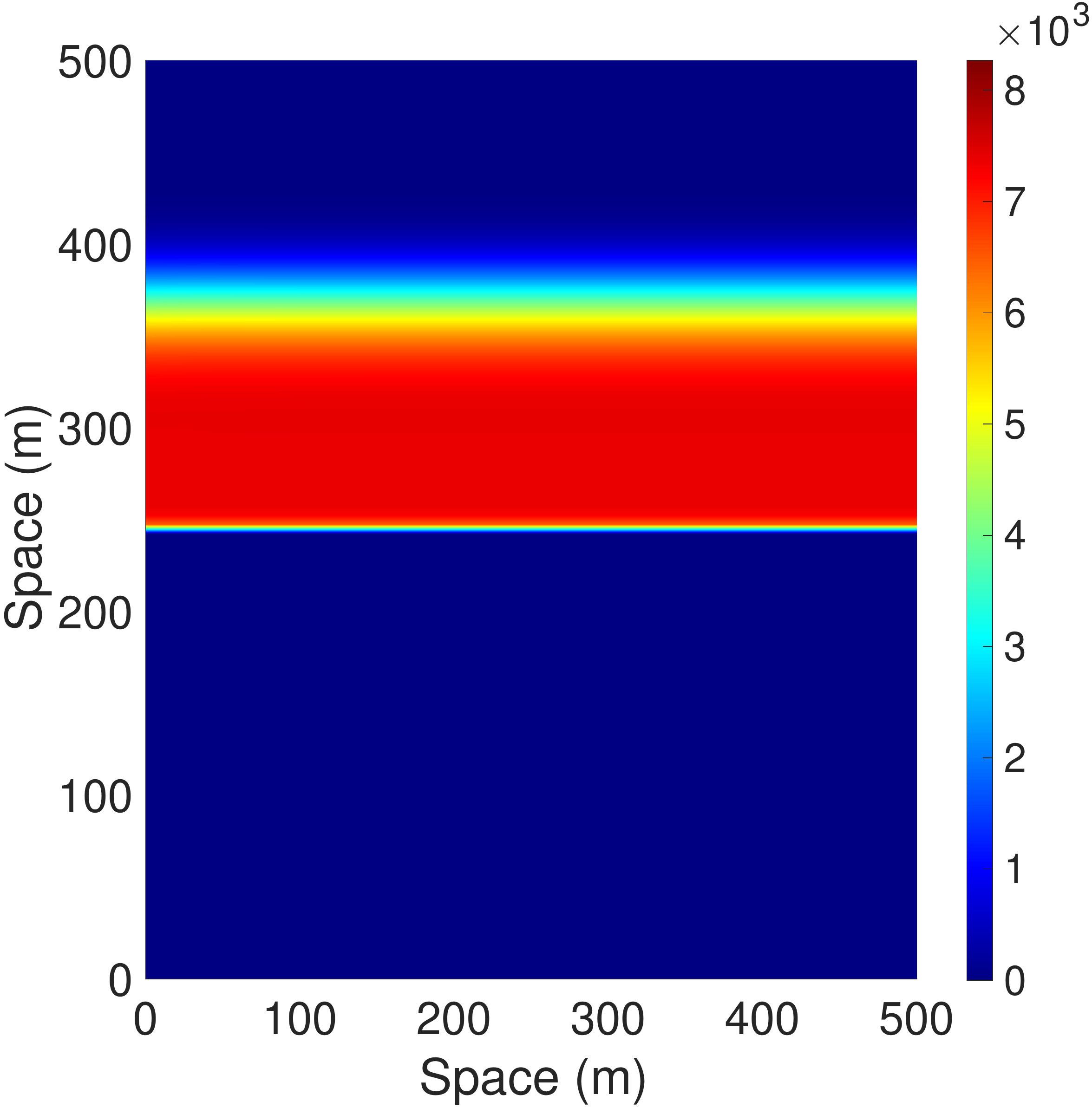}
\put(15,85){\tikz \draw[white,->,thick] (0,0)--(-0.5,0.1);}
\put(20,88){\color{white}\text{\footnotesize$\vento$}}
\put(12.5,52.3){\tikz \draw[black,ultra thick] (0,0)--(3.29,0);}
\end{overpic}}
\\
\subfloat[][Traffic lights $t=7\,\mymin$.]{
\begin{overpic}[width=0.3\columnwidth]{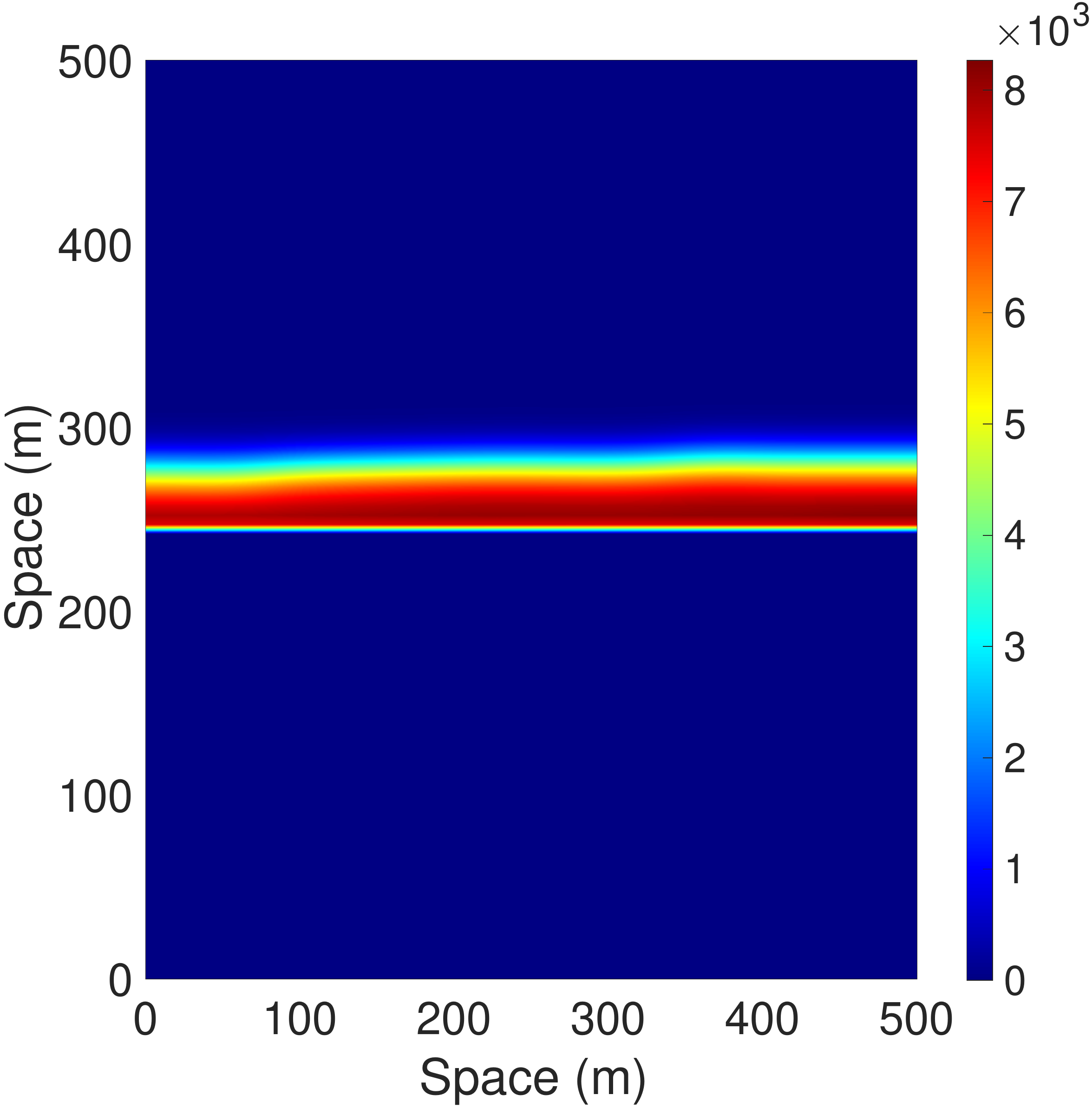}
\put(15,85){\tikz \draw[white,->,thick] (0,0)--(-0.5,0.1);}
\put(20,88){\color{white}\text{\footnotesize$\vento$}}
\put(12.5,52.3){\tikz \draw[black,ultra thick] (0,0)--(3.29,0);}
\end{overpic}
}
\quad
\subfloat[][Traffic lights $t=T/2$.]{
\begin{overpic}[width=0.3\columnwidth]{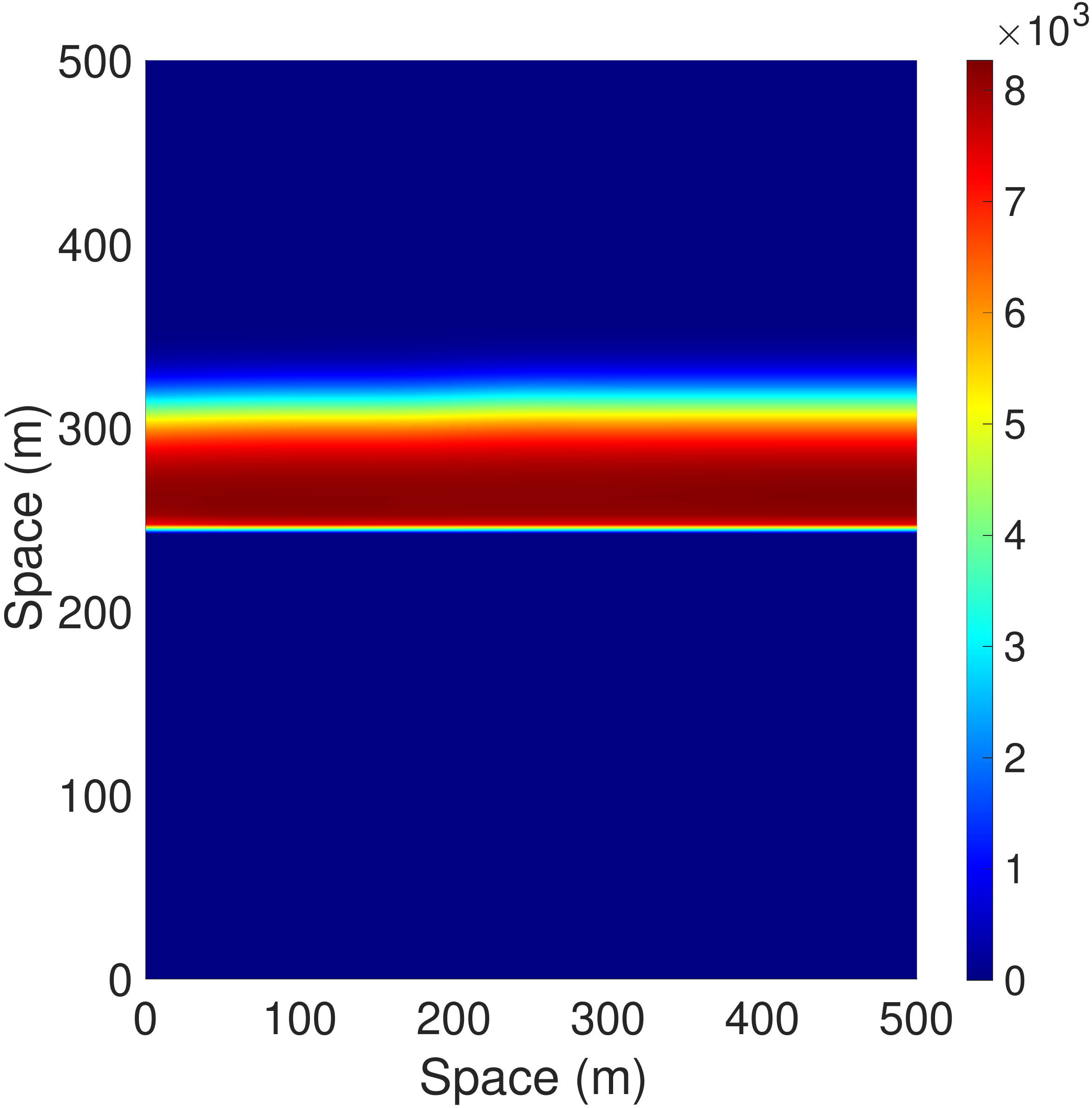}
\put(15,85){\tikz \draw[white,->,thick] (0,0)--(-0.5,0.1);}
\put(20,88){\color{white}\text{\footnotesize$\vento$}}
\put(12.5,52.3){\tikz \draw[black,ultra thick] (0,0)--(3.29,0);}
\end{overpic}
}\quad
\subfloat[][Traffic lights $t=T$.]{
\begin{overpic}[width=0.3\columnwidth]{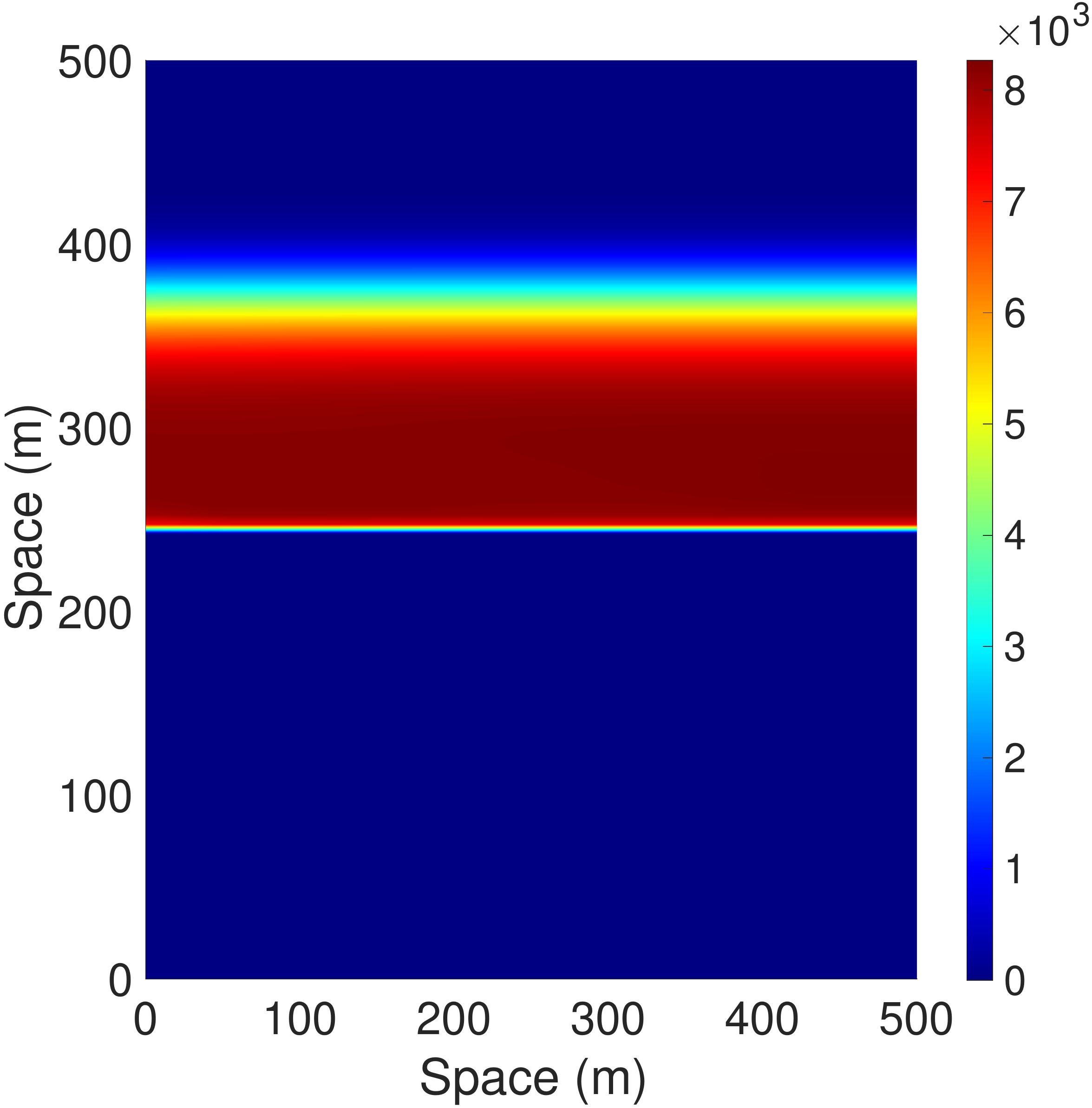}
\put(15,85){\tikz \draw[white,->,thick] (0,0)--(-0.5,0.1);}
\put(20,88){\color{white}\text{\footnotesize$\vento$}}
\put(12.5,52.3){\tikz \draw[black,ultra thick] (0,0)--(3.29,0);}
\end{overpic}
}
\label{fig:diffOriz1}
\caption{Horizontal diffusion of ozone concentration ($\mygram\per\km^{3}$) in $\Omega$ at different times with (bottom) and without (top) traffic lights.}
\label{fig:diffOriz}
\end{figure}

\section{Conclusions}\label{sec:conclusion}

In this paper we proposed to couple a second-order model for traffic with a simplified system of reactions in the atmosphere for ozone production and diffusion.
The coupling is obtained via a general emission model, with parameters specifically tuned on $\nox$ pollutants.
Via numerical simulations we tested various traffic scenarios obtaining three main results: 1)  acceleration waves are most responsible for $\nox$ emissions; 2) the length of traffic cycles impact emissions more than the ratio between green and red light; 3) ozone production and diffusion is strongly impacted by the presence of traffic light.
Future investigations may include extending the model to networks, other pollutants and chemical phenomena, and incorporating more sophisticate diffusion models.

\section*{Acknowledgements}
B.P.'s work was supported by the National Science Foundation under 
Cyber-Physical Systems Synergy Grant No. CNS-1837481. C.B. and M.B. 
would like to thank the Italian Ministry of Instruction, University and Research (MIUR) to support this research with funds coming from PRIN Project 2017 (No. 2017KKJP4X entitled  \textquotedblleft Innovative numerical methods for evolutionary partial differential equations and applications\textquotedblright).

\bibliographystyle{siam}
\bibliography{references_complete}

\end{document}